\newcommand{\bsm}{\left(\begin{smallmatrix}}
\newcommand{\esm}{\end{smallmatrix}\right)}
\newenvironment{customthm}[1]
  {\innercustomthm}
  {\endinnercustomthm}
\newtheorem{theorem}{Theorem}[section]
\newtheorem{corollary}[theorem]{Corollary}
\newtheorem{lemma}[theorem]{Lemma}
\newtheorem{proposition}[theorem]{Proposition}
\newtheorem{question}[theorem]{Question}
\theoremstyle{definition}
\newtheorem{definition}[theorem]{Definition}
\newtheorem{notation}{Notation}
\newtheorem{example}[theorem]{Example}
\newtheorem{remark}[theorem]{Remark}
\newtheorem{construction}[theorem]{Construction}
\newtheorem{claim}{Claim}
\newtheorem*{claim*}{Claim}
\newcommand{\Z}{\mathbb{Z}}
\newcommand{\Q}{\mathbb{Q}}
\newcommand{\R}{\mathbb{R}}
\newcommand{\id}{\operatorname{id}}
\newcommand{\Fix}{\operatorname{Fix}}
\newcommand{\ks}{\operatorname{ks}}
\newcommand{\fr}{\operatorname{fr}}
\newcommand{\rad}{\operatorname{rad}}
\newcommand{\BTOPPin}{\operatorname{BTOPPin}}
\newcommand{\Pin}{\operatorname{Pin}}
\newcommand{\TOPPin}{\operatorname{TOPPin}}
\newcommand{\BTOP}{\operatorname{BTOP}}
\begin{document}
\title{Involutions on~$S^4$}

\author[K.~Boyle]{Keegan Boyle}
\address{Department of Mathematical Sciences, New Mexico State University, USA}
\email{kboyle@nmsu.edu}

\author[W.~Chen]{Wenzhao Chen}
\address{Institute of Mathematical Sciences, ShanghaiTech University, China}
\email{chenwzh@shanghaitech.edu.cn}

\author[A.~Conway]{Anthony Conway}
\address{The University of Texas at Austin, Austin TX,  USA}
\email{anthony.conway@austin.utexas.edu}

\begin{abstract}
This paper studies locally linear involutions on $S^4$. 
Our main theorem shows that any such involution with a 1–dimensional fixed-point set is necessarily linear, provided the fixed-point set admits an equivariant tubular neighborhood. 
The proof combines modified surgery theory with an equivariant version of the Schoenflies theorem, which we establish here. 
We also show that equivariant tubular neighborhoods of $1$–dimensional fixed-point sets, when they exist, are not unique, in contrast to the nonequivariant case. 
Our results combine with earlier work to provide a classification of all locally linear involutions on $S^4$. 
As a further application, we obtain that strongly negative amphichiral knots with trivial Alexander polynomial are equivariantly topologically slice with respect to the linear action, strengthening a previous result of the first two authors. 
Finally, we also prove that when the fixed-point set is~$2$–dimensional, the involution is linear if and only if the fixed-point set is an unknotted $2$–knot.
\end{abstract}

\maketitle

\section{Introduction}
\label{sec:GeneralStatements}

There has been an extensive study of group actions on 4-manifolds, including the smoothability of locally linear actions, exotic smooth actions, and classifications; see the surveys~\cite{EdmondsTransformation} and~\cite{Chen}.
Progress on the classification problem of locally linear group actions relies on surgery theory. 
For example,  free cyclic group actions on simply-connected 4-manifolds were classified by Hambleton-Kreck and Hambleton-Kreck-Teichner as a consequence of their work on the homeomorphism classification of~$4$-manifolds with cyclic fundamental group~\cite{HambletonKreck,
HambletonKreckCancellation,HambletonKreckTeichnerNonorientable}.
The case of pseudofree locally linear actions, meaning that the action is free away from a discrete set of points with nontrivial stabilizer, was thoroughly studied by Wilczynski and Bauer-Wilczynski~\cite{WilczynskiPeriodic,WilczynskiOnThe,BauerWilczynski}. 


This paper focuses on locally linear involutions on $S^4$.
There are two conjugacy classes of free involutions on $S^4$~\cite{HambletonKreckTeichnerNonorientable}, and
when such actions are not free,  they fix an~$n$-sphere or an integer homology~$3$-sphere.
Work of Kwasik and Schultz shows that up to conjugacy there is a unique locally linear involution on~$S^4$  that fixes two points~\cite[Theorem~2.1]{KwasikSchultz}. When the fixed-point set is 2-dimensional,  Gordon~\cite{Gordon}, building on Giffen's disproof of the generalized Smith conjecture~\cite{Giffen},  produced involutions of $S^4$ which fix knotted 2-spheres.
The classification in the case of 3-dimensional fixed-point sets follows from Freedman's work~\cite{Freedman}.
This article focuses on the remaining 1-dimensional fixed-point set~case, i.e that of a fixed circle. 

\begin{theorem}
\label{thm:1DIntro}
If a locally linear involution $\tau \colon S^4 \to S^4$ fixes a circle and this circle admits an equivariant tubular neighborhood, then $\tau$ is conjugate to the standard linear involution.
\end{theorem}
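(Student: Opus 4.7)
The plan is to split $S^4$ equivariantly along the boundary of the given tubular neighborhood, show independently that the neighborhood and its complement are equivariantly homeomorphic to the standard models, and then reglue using the equivariant Schoenflies theorem announced in the abstract. Concretely, take as linear model $\tau_{\mathrm{std}}(x_1,\ldots,x_5) = (x_1, x_2, -x_3, -x_4, -x_5)$ on $S^4 \subset \R^5$, whose fixed circle $F_{\mathrm{std}}$ has standard equivariant tubular neighborhood $N_{\mathrm{std}} \cong S^1 \times D^3$ (with $\tau_{\mathrm{std}}$ acting trivially on $S^1$ and by $-\id$ on $D^3$) and complement $X_{\mathrm{std}} \cong S^2 \times D^2$ (with $\tau_{\mathrm{std}}$ acting antipodally on $S^2$ and trivially on $D^2$). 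Writing $F, N, X$ for the analogous data of the given involution $\tau$, local linearity forces the $\Z/2$-action on each $D^3$-fiber of $N$ to be a linear involution with a single fixed point; since the normal bundle of $F\subset S^4$ is an orientable $3$-plane bundle over $S^1$ and hence trivial, $N$ is equivariantly homeomorphic to $N_{\mathrm{std}}$, and this restricts to an equivariant homeomorphism $\phi \colon \partial N \to \partial N_{\mathrm{std}} = S^1 \times S^2$.

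The heart of the argument is to upgrade $\phi$ to an equivariant homeomorphism $\Psi \colon X \to X_{\mathrm{std}}$ extending $\phi$. Since $F$ is a locally flat circle in $S^4$, Alexander duality together with standard facts about codimension-three embeddings show that $X$ is simply connected with the homotopy type of $S^2$. The free involution $\tau|_X$ therefore has quotient $W := X/\tau$ a compact topological $4$-manifold with $\pi_1(W) = \Z/2$, nontrivial orientation character, homotopy type of $\R P^2$, and boundary equivariantly identified via $\phi$ with $S^1 \times \R P^2 = \partial W_{\mathrm{std}}$, where $W_{\mathrm{std}} := X_{\mathrm{std}}/\tau_{\mathrm{std}} = \R P^2 \times D^2$. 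To show $W \cong W_{\mathrm{std}}$ rel boundary I would apply Kreck's modified surgery: both manifolds share the same normal $2$-type, classified by $B\Z/2$ with the appropriate $w_1$, so the task is to produce a normal bordism rel boundary and show that its surgery obstruction in an $L$-group of $\Z[\Z/2]$ with nontrivial orientation character vanishes. Known Wall group computations together with explicit self-homeomorphisms of $W_{\mathrm{std}}$ should suffice to kill this obstruction. Lifting the resulting rel-boundary homeomorphism to the double cover then yields the desired equivariant $\Psi$.

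Finally, $\phi$ and $\Psi$ agree on $\partial N$ by construction, so they glue to an equivariant self-homeomorphism of $S^4$ conjugating $\tau$ to $\tau_{\mathrm{std}}$. If the gluing is only defined up to an ambiguity coming from equivariant self-homeomorphisms of $S^1 \times S^2$---equivalently, from the choice of an invariant $S^3$ along which to split $S^4$---the equivariant Schoenflies theorem proved in the paper is exactly what is needed to absorb this ambiguity into an equivariant isotopy of $S^4$.

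The main obstacle I expect is the modified-surgery step: precisely identifying the normal $2$-type, constructing the relative normal bordism, and verifying vanishing of the surgery obstruction in $L_5(\Z[\Z/2], w_1)$ with nontrivial orientation character. A secondary obstacle, which the paper explicitly flags, is that equivariant tubular neighborhoods of the fixed circle need not be unique, so the identification $\phi$ is only defined up to an equivariant self-homeomorphism of $S^1 \times S^2$; the equivariant Schoenflies theorem is again the tool for handling this ambiguity at the gluing stage.
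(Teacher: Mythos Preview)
Your overall strategy---split along $\partial N$, analyze the quotient $W = X/\tau$ via modified surgery, and lift back---matches the paper's, but there is a genuine gap at the decisive step. You assert that modified surgery will produce a rel-boundary homeomorphism $W \cong W_{\mathrm{std}} = \R P^2 \times D^2$, but this is not always possible: the relevant bordism group $\Omega_4(\BTOPPin^+, \xi) \cong \Z/2$ is detected by the Kirby--Siebenmann invariant, so when $\ks(W) = 1 \neq 0 = \ks(W_{\mathrm{std}})$ there is no normal bordism on which to run surgery, and in fact $W$ is then a \emph{fake} $\R P^2 \times D^2$, not homeomorphic to $W_{\mathrm{std}}$. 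The paper shows (Theorem~\ref{thm:intrononuniquenessIntro}) that both values of $\ks(W)$ actually occur for the standard involution, depending on which equivariant tubular neighborhood is chosen---this is precisely the non-uniqueness you flag as a ``secondary obstacle,'' but it is in fact the obstruction that breaks your main argument, not a peripheral gluing issue.

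The paper's fix is a doubling trick. When $\ks(W) = 1$, cut $(S^4, \tau)$ open to a $(B^4, \tau)$ and double it; additivity of $\ks$ gives $1+1 = 0$ on the quotient of the doubled complement, so modified surgery now applies and shows the double is $(S^4, \tau_{\mathrm{std}})$. The equivariant Schoenflies theorem (Theorem~\ref{thm:EquivariantSchoenfliesIntro}) is then invoked---not to absorb a gluing ambiguity as you propose, but to conclude that each half $(B^4, \tau)$ of this standard double is itself standard, whence $(S^4, \tau) \cong (S^4, \tau_{\mathrm{std}})$. So Schoenflies enters one level deeper and for a structurally different reason than in your sketch; without the doubling step your argument stalls in the $\ks=1$ case.
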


Here, the \emph{standard linear involution} $S^4 \to S^4$ is obtained by extending the linear involution~$\R^4 \to \R^4,(x,y_1,y_2,y_3)\mapsto (x,-y_1,-y_2,-y_3)$ by fixing the point at infinity. 
Note that when the fixed-point set is~$1$-dimensional, an involution is necessarily orientation-reversing since a locally linear involution of~$S^4$ is orientation-preserving if and only if its fixed-point set is even-dimensional. 
Theorem \ref{thm:1DIntro} is a restatement of Theorem \ref{thm:1dimlclassification} which is proved in Section \ref{sec:1Dim}.

Since fixed-point sets of smooth actions admit equivariant tubular neigbhorhoods (see e.g.~\cite[Chapter VI, Theorem 2.6]{BredonIntroduction}),  Theorem~\ref{thm:1DIntro} admits the following corollary. 

\begin{corollary}
Every smooth involution of $S^4$ that fixes a circle is topologically conjugate to the standard linear action.
\end{corollary}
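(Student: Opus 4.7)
The plan is to deduce this corollary directly from Theorem~\ref{thm:1DIntro} by verifying its two hypotheses for smooth involutions. First I would observe that any smooth involution $\tau\colon S^4\to S^4$ is in particular locally linear: near each fixed point one can linearize the action via an equivariant chart obtained by averaging a Riemannian metric and using the equivariant exponential map, and away from fixed points local linearity is automatic. This handles the first hypothesis of Theorem~\ref{thm:1DIntro}.

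Next I would address the equivariant tubular neighborhood hypothesis. For smooth $\Z/2$–actions on smooth manifolds, the fixed-point set is a smooth submanifold and admits an equivariant tubular neighborhood by the classical smooth equivariant tubular neighborhood theorem, e.g.~\cite[Chapter VI, Theorem 2.6]{BredonIntroduction}, which the paper has already cited in the remark immediately preceding the corollary. The argument is to choose a $\tau$–invariant Riemannian metric (obtained by averaging any metric under the finite group action) and then use the normal exponential map of the fixed circle, which is equivariant and a diffeomorphism on a small neighborhood of the zero section in the normal bundle.

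With both hypotheses of Theorem~\ref{thm:1DIntro} verified, the theorem immediately yields that $\tau$ is topologically conjugate to the standard linear involution, which is exactly the statement of the corollary. There is really no obstacle here since the content has already been packaged into Theorem~\ref{thm:1DIntro}; the corollary is a transcription of the hypotheses into the smooth setting. The only thing to be careful about is wording: the conjugacy is topological, not smooth, as Theorem~\ref{thm:1DIntro} only produces a topological conjugacy.
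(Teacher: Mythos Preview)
Your proposal is correct and follows essentially the same approach as the paper: the paper simply invokes the smooth equivariant tubular neighborhood theorem \cite[Chapter VI, Theorem 2.6]{BredonIntroduction} to verify the hypothesis of Theorem~\ref{thm:1DIntro}, and then concludes. Your additional remarks about local linearity via an invariant metric are fine but not strictly needed, since smooth actions are automatically locally linear for the same reason.
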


When the fixed-point set of a locally linear action is of dimension $n \neq 1$, it admits an equivariant tubular neigbhorhood that is unique up to equivariant ambient isotopy; see Proposition~\ref{prop:23dimETN}.
When~$n=1$, it remains unknown whether every involution admits an equivariant tubular neighborhood,  but its existence is a necessary condition for the conclusion of Theorem~\ref{thm:1DIntro} and we prove that if one exists, then it  is not unique.

\begin{theorem} \label{thm:intrononuniquenessIntro}
Assume a locally linear involution on a $4$-manifold $M$ fixes a circle.
If this circle admits an orientable equivariant tubular neighborhood then, up to equivariant homeomorphism of pairs, it admits at least two, and exactly two when $M=S^4$.
\end{theorem}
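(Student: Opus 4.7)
The plan is to classify orientable equivariant tubular neighborhoods of $F = S^1$ via their equivariant framings of the normal bundle $\nu_F$, and to exhibit the resulting $\Z/2$-ambiguity as the source of non-uniqueness. Fix the given orientable equivariant tubular neighborhood $N$ and identify $(N,F)$ with $(S^1 \times D^3, S^1 \times \{0\})$ carrying the standard involution $(\theta,y)\mapsto(\theta,-y)$. Since $-\id$ is central in $O(3)$, every continuous map $R\colon S^1\to O(3)$ defines an equivariant bundle automorphism $(\theta,y)\mapsto(\theta,R(\theta)y)$ of this model, so equivariant framings of $\nu_F$ differ from one another by such maps; restricting to orientation-preserving gauges, the classes modulo equivariant bundle isotopy are parametrized by $\pi_1(SO(3)) = \Z/2$.

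To produce a second equivariant tubular neighborhood $N'$ inequivalent to $N$, I choose a loop $R\colon S^1\to SO(3)$ representing the generator of $\pi_1(SO(3))$ and modify the standard embedding $\phi\colon S^1\times D^3 \hookrightarrow M$ by precomposition with the gauge $(\theta,y)\mapsto(\theta,R(\theta)y)$. The result is a new equivariant tubular neighborhood $N'$ with twisted bundle structure, carrying the same underlying involution but with a non-standard equivariant framing. To prove $(N,F)$ and $(N',F)$ are inequivalent under equivariant homeomorphism of pairs, I would introduce a framing-class invariant valued in $H^1(F;\pi_1(SO(3))) = \Z/2$—an equivariant analogue of the twist number of a framed knot component—which is preserved by equivariant homeomorphisms of pairs and takes different values on $N$ and $N'$; the non-triviality on $N'$ reflects the fact that a full rotation along $F$ is not null-homotopic in $SO(3)$.

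For the second assertion of the theorem, namely that exactly two classes occur when $M = S^4$, I appeal to Theorem~\ref{thm:1DIntro}: any equivariant tubular neighborhood of $F$ in $S^4$ gives rise to an equivariant conjugation with the standard linear action, so every such neighborhood differs from the standard one by an equivariant gauge transformation. Classifying these gauges modulo equivariant isotopy recovers $\pi_1(SO(3)) = \Z/2$, yielding at most two classes, which together with the existence argument above gives exactly two.

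The main obstacle I anticipate is the verification that the framing-class invariant is preserved under all equivariant homeomorphisms of the pair $(N,F)$—not merely those respecting the underlying linear-bundle structure. This reduces to computing the equivariant mapping class group of $(S^1\times D^3, S^1\times\{0\})$ with the standard involution and showing that the generator of $\pi_1(SO(3))$ is not realized as an internal self-equivalence of this pair; only then does the framing class become a bona fide invariant of the abstract equivariant pair rather than of the bundle presentation.
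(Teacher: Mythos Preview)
Your construction does not produce a new equivariant tubular neighborhood.  If $\phi\colon S^1\times D^3\hookrightarrow M$ is the embedding realizing $N$ and you set $\phi'(\theta,y)=\phi(\theta,R(\theta)y)$, then the image $\phi'(S^1\times D^3)$ equals $\phi(S^1\times D^3)=N$ as a subset of $M$, and the bundle projection to $F$ is unchanged (your gauge fixes the $S^1$--coordinate).  So $(M,N')=(M,N)$ as pairs, and there is nothing for your framing invariant to distinguish.  Put differently, the gauge $(\theta,y)\mapsto(\theta,R(\theta)y)$ is itself an equivariant bundle automorphism of the model that intertwines $\phi$ and $\phi'$ and is the identity on underlying points of $M$; it therefore witnesses the equivalence you are trying to disprove.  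A sanity check: in the smooth category equivariant tubular neighborhoods of the fixed circle are unique up to equivariant ambient isotopy, so any argument that produces two smoothly embedded neighborhoods and claims they are inequivalent must be wrong.  Your $N'$ is smooth whenever $N$ is.

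The paper's mechanism is of a completely different nature.  The invariant that distinguishes the two neighborhoods is the Kirby--Siebenmann invariant $\ks\big((M\setminus\nu)/\rho\big)\in\Z/2$ of the quotient of the complement.  On $S^4$ one neighborhood $\nu_0$ has quotient complement $\R P^2\times D^2$ (so $\ks=0$), while a second neighborhood $\nu_1$ is built from a surgery--theoretic construction producing a \emph{fake} $\R P^2\times D^2$ with $\ks=1$; Theorem~\ref{thm:1DIntro} is then used to show the ambient involutions are conjugate, so both neighborhoods sit inside a single $(S^4,\rho)$.  The ``at most two'' direction also comes from this $\ks$ invariant together with the modified--surgery classification, not from $\pi_1(SO(3))$.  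For a general $M$ one transplants the ``shell'' $\overline{\nu}_0\setminus\nu_1$ (whose quotient has $\ks=1$) from $S^4$ into any given orientable equivariant tubular neighborhood of $F\subset M$, and additivity of $\ks$ shows the two complements have inequivalent quotients.  The second neighborhood is therefore an intrinsically non-smoothable object, which is why no framing or gauge argument can detect it.
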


Theorem~\ref{thm:intrononuniquenessIntro} (which is the combination of Theorem~\ref{thm:nonuniqueness} and Corollary~\ref{cor:other4manifolds}) contrasts with the nonequivariant setting in which Freedman and Quinn proved that a locally flat submanifold of a~$4$-manifold admits a tubular neighborhood that is unique up to ambient isotopy~\cite[Theorem~9.3]{FreedmanQuinn}.

\subsection{An equivariant Schoenflies theorem}
The surprising aspect of the proof of Theorem~\ref{thm:1DIntro} is its reliance on a combination of Kreck's modified surgery~\cite{KreckSurgeryAndDuality} and an equivariant Schoenflies theorem that we establish.
Let $\tau \colon S^4 \to S^4$ be a locally linear involution that fixes a circle.
By hypothesis,  this circle admits an equivariant tubular neigbhorhood $\nu$ and,  in what follows,  we write $X_{\rho,\nu}:=(S^4 - \nu)/\rho$.
We outline the proof of Theorem~\ref{thm:1DIntro} 
\begin{itemize}
\item The first step involves proving that if $\ks(X_{\rho,\nu})=0$, then $\rho$ is conjugate to the standard linear action; here $\ks$ denotes the Kirby-Siebenmann invariant.
This result is proved using Kreck's modified surgery~\cite{KreckSurgeryAndDuality} and more specifically a normal $1$-type argument that relies on a result related to the~$\ell_5$-monoid~$\ell_5(\Z[\Z/2],-)$ from~\cite[Proposition~4]{HambletonKreckTeichnerNonorientable}. 
\item The second step is arguably more suprising as it relies on proving that if $\ks(X_{\rho,\nu})=1$,  then the involution $\rho$ is nonetheless conjugate to the standard linear action.
This unexpected equivariant homeomorphism is not constructed directly, rather, it is obtained by using a doubling trick and applying an equivariant generalization of the Schoenflies theorem. 
\end{itemize}

In order to state our equivariant generalization of Schoenflies theorem we simultaneously use~$\rho_{std}$ to denote a linear involution on $\R^4$, its restriction to~$B^4$, and the involution on~$S^4$ given by compactification.

\begin{theorem}[Equivariant Schoenflies Theorem]
\label{thm:EquivariantSchoenfliesIntro}
Let~$\rho_{std} \colon S^4 \to S^4$ be a standard linear involution and let~$\rho_3 \colon S^3 \to S^3$ be 
a linear involution that satisfies~$
\dim \Fix(\rho_3) = \dim \Fix(\rho_{std})-1$.
If~$f \colon (S^3,\rho_3) \to (S^4, \rho_{std})$ is an equivariant locally flat embedding, then the closure of each component of~$S^4 - f(S^3)$  is equivariantly homeomorphic to~$(B^4, \rho_{std})$.
\end{theorem}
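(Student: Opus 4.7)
My plan is to upgrade Brown's locally flat Schoenflies theorem to the equivariant setting, working one component of $S^4 \setminus f(S^3)$ at a time. As a first step, I would show that each such component is $\rho_{std}$-invariant. Setting $d = \dim \Fix(\rho_{std}) \in \{0,1,2,3\}$, equivariance of $f$ gives $f(\Fix(\rho_3)) \subseteq \Fix(\rho_{std})$, and for $d \geq 1$ the classical Schoenflies theorem in dimension $d \leq 3$ splits $\Fix(\rho_{std}) \cong S^d$ along the locally flat $(d-1)$-sphere $f(\Fix(\rho_3))$ into two $d$-disks $D_\pm$, one contained in each component. When $d = 0$ the same conclusion holds because $\rho_3$ is free, so the two isolated fixed points of $\rho_{std}$ cannot lie on $f(S^3)$ and must be separated by it. By Brown's theorem, the closure $\overline{C_\pm}$ of each component is homeomorphic (nonequivariantly) to $B^4$; the remaining task is to upgrade this to an equivariant homeomorphism $(\overline{C_\pm}, \rho_{std}|_{\overline{C_\pm}}) \cong (B^4, \rho_{std})$.

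For the equivariant upgrade, the plan is to construct an equivariant bicollar of the codimension-one locally flat equivariant submanifold $f(S^3) \subset S^4$. Such a bicollar should exist by combining the local linear normal structure of $\rho_{std}$ near its fixed set with an averaging argument on the complement, where $\rho_{std}$ acts freely. This equivariant bicollar identifies a neighborhood of $\partial \overline{C_\pm}$ equivariantly with $(S^3 \times [0,1], \rho_3 \times \id)$. With this in hand I would run Brown's infinite swindle equivariantly: pick a fixed point $p_\pm$ in the interior of $D_\pm$ (or the unique interior fixed point when $d=0$), choose an equivariant linear model on a small ball around $p_\pm$, and interpolate between this model and the equivariant boundary collar through a nested sequence of equivariant bicollared $3$-spheres accumulating on $p_\pm$. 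Gluing the resulting equivariant shells by the Mazur swindle then produces an equivariant homeomorphism $(\overline{C_\pm}, \rho_{std}|_{\overline{C_\pm}}) \cong (B^4, \rho_{std})$, matching the linear model at the core with the collar structure at the boundary.

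The main obstacle is carrying out Brown's infinite swindle while maintaining equivariance, especially the interpolation between the equivariant bicollar of $f(S^3)$ near $\partial \overline{C_\pm}$ and the equivariant linear model near $p_\pm$. The two standard equivariant pieces at either end (bicollar on the boundary, linear ball at the fixed point) are largely given by the local linearity of $\rho_{std}$, so the real work lies in producing the nested family of equivariant bicollared $3$-spheres between them and controlling their convergence to $p_\pm$ in a manner compatible with the standard linear action. A secondary subtlety, most visible in the free case $d=0$, is that the interpolation must pass through orbits of varying stabilizer type (the free orbits away from the fixed locus and the trivial stabilizer along $D_\pm$), so the successive shells must be chosen to respect this stratification; however, since the ambient action $\rho_{std}$ is \emph{literally} standard and linear, each shell can be built by first arranging the equivariant structure on its intersection with $\Fix(\rho_{std})$ and then thickening using the normal linear representation. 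Once this equivariant shell-gluing is set up, the classical argument applies in the equivariant category and yields the desired equivariant homeomorphism on each component, completing the proof.
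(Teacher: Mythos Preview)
Your proposal has a genuine gap at the step you correctly flag as ``the real work'': producing a nested sequence of equivariant bicollared $3$-spheres in $\overline{C_\pm}$ interpolating between the boundary collar and the linear ball around $p_\pm$. Knowing that the ambient involution $\rho_{std}$ is literally linear does not help here, because the embedded sphere $f(S^3)$ is only locally flat, so the region between $\partial\overline{C_\pm}$ and a small linear ball about $p_\pm$ has no a~priori equivariant product structure. What you are implicitly assuming is an equivariant annulus theorem: that a region in $(S^4,\rho_{std})$ bounded by two $\rho_{std}$-invariant locally flat $3$-spheres is equivariantly homeomorphic to $(S^3\times I,\rho_3\times\id)$. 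This is exactly Question~\ref{q:EAC} of the paper and is \emph{open} when $\dim\Fix(\rho_{std})=1$, the case of principal interest. So the Mazur-style swindle you outline cannot be completed with current technology. (Incidentally, the ``infinite swindle'' is Mazur's argument; Brown's proof is the cellularity/Bing-shrinking argument, and these are genuinely different.)

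The paper circumvents this obstacle by following Brown rather than Mazur. For $d\geq 1$ it first proves an equivariant collaring theorem (Theorem~\ref{thm:equivariantcollaring}) to get an equivariant collar $N\cong(S^3\times I,\rho_3\times\id)$ of $\partial\overline{C_\pm}$, and then, instead of trying to understand the closed complement $X_1:=\overline{C_\pm}\setminus N$ directly, it shows $X_1$ is \emph{equivariantly cellular} (Definition~\ref{def:equivariantly cellular}). Cellularity is established by the trick of Lemma~\ref{lem:cellular}: remove a small standard ball from the collar $N$, collapse $X_1$ and the other side $X_2$, and observe that the quotient embeds equivariantly in $N/\partial N\cong(S^4,\rho_{std})$. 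An equivariant Bing shrinking lemma (Lemma~\ref{lem:shrinkingcells}) then gives $\overline{C_\pm}\cong \overline{C_\pm}/X_1 \cong N/(S^3\times\{0\})\cong (B^4,\rho_{std})$. The point is that Brown's method never asks you to identify the region between two invariant spheres; it only asks you to crush a cellular set, and the certificate of cellularity comes from the collar itself. For $d=0$ the paper does use an annulus-type input, but a nontrivial one: Theorem~\ref{thm:KwasikSchultz}, which combines Livesay's classification of free involutions on $S^3$ with the Kwasik--Schultz $h$-cobordism result for $\mathbb{R}P^3$. Your outline does not account for this ingredient either.
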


Both assumptions in Theorem~\ref{thm:EquivariantSchoenfliesIntro} are necessary for the conclusion to hold.
Theorem~\ref{thm:EquivariantSchoenfliesIntro} is proved by adapting Brown's proof of the Schoenflies Theorem~\cite{BrownSchoenflies} to the equivariant setting.
Our exposition, which relies on an equivariant Bing shrinking argument,  
is based on Putman's exposition of Brown's result~\cite{Putman}; see Section~\ref{sub:MainProofs}.

\medbreak
We briefly outline the proof of Theorem~\ref{thm:intrononuniquenessIntro}, focusing on the statement that the fixed-point set of the standard linear involution on $S^4$ admits two equivariant tubular neigbhorhoods.
The aforementioned modified surgery argument implies that the fixed-point set of a locally linear involution of $S^4$ admits at most two equivariant tubular neigbhborhoods.
Adapting a surgery theoretic construction of Hambleton-Kreck-Teichner~\cite{HambletonKreckTeichnerNonorientable} produces an involution~$\rho_1$ of $S^4$ whose fixed circle admits a tubular neighborhood $\nu_1$ with $\ks(X_{\rho_1,\nu_1})=1$.
Applying Theorem~\ref{thm:1DIntro},  we deduce that $\rho_1$ is conjugate to the standard linear action and this produces a second equivariant tubular neigbhorhood.

\subsection{An application to equivariant sliceness}

We provide another application of the combination of Theorem~\ref{thm:1DIntro} and the equivariant Schoenflies theorem, this time to a question in equivariant knot concordance.

In \cite{BoyleChen} it is shown that every strongly negative amphichiral knot with trivial Alexander polynomial is equivariantly slice. That is there is some extension~$\rho$ of the point-reflection symmetry on~$S^3$ to~$B^4$ such that there is a~$\rho$-invariant slice disk for~$K$. 
In \cite[Question 1.3]{BoyleChen}, the first two authors ask if there is a non-standard symmetry on~$B^4$ restricting to point-reflection on~$S^3$. 
One potential approach to this question is to consider the involutions constructed in \cite{BoyleChen}. However, Theorem \ref{thm:IntroTest} implies that they are all conjugate to the linear involution. In particular, we can improve \cite[Theorem 1.2]{BoyleChen} to guarantee that the constructed involution is the standard (linear) symmetry on~$B^4$.
In other words, we show that $K$ is \emph{standardly equivariantly} slice, meaning that it is sliced by a disk which is invariant under the linear symmetry with $1$-dimensional fixed-point~set.

\begin{theorem}
If~$K$ is a strongly negative amphichiral knot with~$\Delta_K(t) = 1$, then~$K$ is standardly equivariantly topologically slice. 
\end{theorem}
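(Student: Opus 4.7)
The plan is to start from the locally linear involution $\rho$ and $\rho$-invariant slice disk $D$ produced by \cite[Theorem~1.2]{BoyleChen}, and then use Theorems~\ref{thm:1DIntro} and~\ref{thm:EquivariantSchoenfliesIntro} to conjugate $\rho$ to $\rho_{std}$ relative to the boundary, transporting $D$ to the desired standardly equivariant slice disk. Let $\sigma\colon S^3\to S^3$ denote the point-reflection involution, i.e.\ the restriction of $\rho_{std}$ to $\partial B^4$. Applying \cite[Theorem~1.2]{BoyleChen} to $K$ produces a locally linear involution $\rho\colon B^4\to B^4$ extending $\sigma$ and a $\rho$-invariant locally flat slice disk $D\subset B^4$ for $K$; local linearity at each of the two boundary fixed points of $\sigma$ forces $\Fix(\rho)$ to contain a locally flat arc connecting these two points, and one expects from the Boyle--Chen construction that $\Fix(\rho)$ is exactly this arc.

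Next, I would pass to a closed model by doubling: glue $(B^4,\rho)\cup_{\mathrm{id}}(B^4,\rho_{std})$ along the common boundary $(S^3,\sigma)$ to form a locally linear involution $\tau\colon S^4\to S^4$ whose fixed set is the circle obtained by concatenating the fixed arcs of $\rho$ and of $\rho_{std}$. Assuming this fixed circle admits an equivariant tubular neighborhood (automatic on the $\rho_{std}$ side, and to be extracted from the Boyle--Chen construction on the $\rho$ side), Theorem~\ref{thm:1DIntro} produces an equivariant homeomorphism $\phi\colon(S^4,\tau)\to(S^4,\rho_{std})$. The equatorial $S^3\subset(S^4,\tau)$ carries the linear involution $\sigma$, and $\dim\Fix(\sigma)=0=\dim\Fix(\rho_{std})-1$, so Theorem~\ref{thm:EquivariantSchoenfliesIntro} applies to the equivariant locally flat embedding $\phi|_{S^3}\colon(S^3,\sigma)\to(S^4,\rho_{std})$ and shows that the closed hemisphere $\phi(B^4,\rho)$ is equivariantly homeomorphic to $(B^4,\rho_{std})$. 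Composing with $\phi$ yields an equivariant homeomorphism $\Psi\colon(B^4,\rho)\to(B^4,\rho_{std})$.

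Finally, I would adjust the boundary. Since $\rho_{std}$ on $B^4$ is the cone on $\sigma$, i.e.\ $\rho_{std}(t,y)=(t,\sigma(y))$ in the cone decomposition $B^4=([0,1]\times S^3)/(\{0\}\times S^3)$, the equivariant self-homeomorphism $\Psi|_{S^3}$ of $(S^3,\sigma)$ has a cone extension that is an equivariant self-homeomorphism of $(B^4,\rho_{std})$ with the prescribed restriction to $S^3$. Coning $(\Psi|_{S^3})^{-1}$ and precomposing with $\Psi$ then gives an equivariant homeomorphism $\widehat\Psi\colon(B^4,\rho)\to(B^4,\rho_{std})$ that is the identity on $\partial B^4$, so $\widehat\Psi(D)$ is a $\rho_{std}$-invariant locally flat slice disk for $K$, which is exactly what is required. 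The hardest step will be the verification of the equivariant tubular neighborhood of $\Fix(\tau)$ in $(S^4,\tau)$: as discussed before Theorem~\ref{thm:intrononuniquenessIntro}, existence of such a neighborhood is unknown in general for $1$-dimensional fixed-point sets and is a genuine hypothesis of Theorem~\ref{thm:1DIntro}, so it must be argued from the explicit details of the Boyle--Chen construction rather than from its abstract output; a secondary point to verify, for the same reason, is that $\Fix(\rho)$ has no extraneous closed components in the interior of $B^4$.
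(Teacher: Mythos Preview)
Your proposal is correct and follows essentially the same route as the paper: cap off $(B^4,\rho)$ with a standard $(B^4,\rho_{std})$, apply Theorem~\ref{thm:1DIntro} to the resulting involution of $S^4$, and then use the equivariant Schoenflies theorem to conclude that $(B^4,\rho)\cong(B^4,\rho_{std})$; the paper likewise notes that the equivariant tubular neighborhood must be read off from the explicit Boyle--Chen construction (via equivariantly filling the $0$-surgery with a $D^2\times D^2$). Your coning step to arrange $\widehat\Psi|_{\partial B^4}=\id$ is a detail the paper glosses over but which is indeed needed so that the transported disk bounds $K$ rather than merely $\Psi(K)$.
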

\begin{proof}
The proof of~\cite[Theorem 1.2]{BoyleChen} constructs an involution $\rho$ on $B_1:= B^4$ that extends the point-reflection involution on $S^3$,  fixes an arc pointwise and fixes a slice disk for $K$ setwise.
The construction in~\cite[Proof of Theorem 1.2]{BoyleChen} ensures that the fixed-point set of~$\rho$ has an equivariant tubular neigbhorhood:
indeed the disk is obtained by first proving that the $0$-surgery on $K$ equivariantly bounds a homotopy $S^1$ and then equivariantly adding a copy of $D^2 \times D^2$.
We can then glue on a $4$-ball~$B_2:= B^4$ with a linear involution that fixes an arc pointwise to get an involution~$\overline{\rho}$ on~$S^4 = B_1 \cup B_2$ whose fixed-point circle admits an equivariant tubular neigborhood.
Apply Theorem \ref{thm:1DIntro} to see that~$\overline{\rho}$ is conjugate to the linear involution. 
Then by the equivariant Schoenflies theorem (Theorem~\ref{thm:EquivariantSchoenfliesIntro}),  the restriction of~$\overline{\rho}$ to~$B_1$ is standard. 
\end{proof}

\subsection{Involutions of the $4$-sphere}

We conclude by describing how Theorem~\ref{thm:1DIntro} complements previous results concerning locally linear involutions on $S^4$.

\begin{theorem}
\label{thm:IntroTest}
Locally linear involutions of the~$4$-sphere are classified as follows.
\begin{enumerate}
\item [(-1)] When the fixed-point set is empty, conjugacy classes of free actions on~$S^4$ correspond to homotopy~$\R P^4$s of which there are exactly two.
\item [(0)] When the fixed-point set is~$0$-dimensional,  it consists of two points.
Up to conjugacy, there is a unique locally linear involution on~$S^4$ that fixes two points.
\item [(1)] When the fixed-point set is~$1$-dimensional,  it is a circle.
There is a unique conjugacy class of locally linear involutions on~$S^4$ whose fixed-point set is one-dimensional and admits an equivariant tubular neigbhorhood.
\item [(2)] When the fixed-point set is~$2$-dimensional,  it is a sphere.
The set of conjugacy classes of locally linear involutions on $S^4$ with $2$-dimensional fixed-point set is in bijection with the set of isotopy classes of~$2$-knots~$S \subset S^4$ whose double branched cover satisfies~$\Sigma_2(S) \cong S^4$.
\item  [(3)] When the fixed-point set is~$3$-dimensional,  it is an integer homology~$3$-sphere.
The set of conjugacy classes of locally linear involutions with~$3$-dimensional fixed-point sets is in  bijection with the set of homeomorphism classes of integer homology~$3$-spheres.
\item [(4)] When the fixed-point set is~$4$-dimensional, the involution is the identity.
\end{enumerate}
\end{theorem}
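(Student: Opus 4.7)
The plan is to handle the six cases separately by combining Smith theory with the cited classification results and Theorem~\ref{thm:1DIntro}. By Smith theory applied to a locally linear $\mathbb{Z}/2$-action on $S^4$, the fixed-point set is a $\mathbb{Z}/2$-cohomology sphere of some dimension $n \in \{-1,0,1,2,3,4\}$ (with $n = -1$ meaning empty), and local linearity makes it a topological submanifold. Identifying $\Fix(\tau)$ as two points, a circle, and a $2$-sphere in cases~$(0)$, $(1)$, $(2)$ respectively will then follow from classifying closed connected manifolds with the stated $\mathbb{Z}/2$-cohomology in each dimension.

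Cases~$(-1)$, $(0)$, $(1)$, and $(4)$ reduce directly to existing results. For~$(-1)$ I will note that conjugacy classes of free involutions biject with homeomorphism types of their quotients, which are homotopy~$\mathbb{RP}^4$s, and invoke the Hambleton-Kreck-Teichner enumeration~\cite{HambletonKreckTeichnerNonorientable} showing exactly two such homotopy types exist. Case~$(0)$ is Kwasik-Schultz~\cite[Theorem~2.1]{KwasikSchultz}. Case~$(1)$ is Theorem~\ref{thm:1DIntro}, the main theorem of this paper. Case~$(4)$ is immediate.

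For case~$(2)$ I will exhibit the bijection explicitly via branched covering theory. In one direction, given~$\tau$, set $S := \Fix(\tau) \cong S^2$; the quotient map $S^4 \to S^4/\tau$ realises $S^4$ as the double branched cover $\Sigma_2(S)$, whence $\Sigma_2(S) \cong S^4$. In the other direction, given a $2$-knot $S \subset S^4$ with $\Sigma_2(S) \cong S^4$, the nontrivial deck transformation of this cover is a locally linear involution fixing~$S$ pointwise. To promote these constructions to mutually inverse bijections between the relevant equivalence classes, I will use the uniqueness of equivariant tubular neighborhoods for $2$-dimensional fixed sets (Proposition~\ref{prop:23dimETN}) together with standard branched-cover theory.

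For case~$(3)$ the fixed-point set $Y$ is a closed orientable separating codimension-$1$ submanifold of $S^4$, and cuts it into pieces $W_1, W_2$ that $\tau$ swaps, so $W_1 \cong W_2 =: W$ with $\partial W = Y$. A Mayer--Vietoris computation, combined with the fact that $\tau$ reverses the orientation of $S^4$ (since $\dim \Fix(\tau)$ is odd), will show that~$Y$ is an integer homology $3$-sphere and $W$ is contractible. Freedman's classification~\cite{Freedman} of topological contractible $4$-manifolds bounding homology $3$-spheres then provides the claimed bijection, with the involution recovered by doubling~$W$ along~$Y$. The main obstacle throughout the theorem is case~$(1)$: it is the content of Theorem~\ref{thm:1DIntro}, which requires the full modified surgery argument and the equivariant Schoenflies theorem (Theorem~\ref{thm:EquivariantSchoenfliesIntro}) developed in this paper.
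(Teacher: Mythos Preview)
Your outline matches the paper's proof almost exactly: the paper reduces to Proposition~\ref{prop:FixedPointSetSphere} for the shape of the fixed-point set and then invokes Proposition~\ref{prop:Free}, Theorem~\ref{thm:0dim}, Theorem~\ref{thm:1DIntro}, Proposition~\ref{prop:2D}, and Proposition~\ref{prop:3D} for the five nontrivial cases, precisely as you do.

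There is one genuine gap in your sketch of case~(2). You write ``set $S := \Fix(\tau)$; the quotient map $S^4 \to S^4/\tau$ realises $S^4$ as the double branched cover $\Sigma_2(S)$.'' This conflates the fixed-point set upstairs with the branch locus downstairs, and more importantly it silently assumes that $S^4/\tau \cong S^4$. That fact is not automatic: one must first argue that the quotient is a manifold (which follows from local linearity with a $2$-dimensional fixed set), then that it is simply-connected with the homology of $S^4$, and finally invoke Freedman to conclude it is $S^4$. The paper does this in Lemma~\ref{lem:SimplyConnectedOrbifold}, using the orbifold fundamental group to control $\pi_1$ of the quotient. Without this step your map from involutions to $2$-knots in $S^4$ is not defined. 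You also need Quinn's result that the mapping class group of $S^4$ is trivial to pass from ``homeomorphic as pairs'' to ``isotopic'' when checking injectivity; this is not covered by ``standard branched-cover theory'' or by Proposition~\ref{prop:23dimETN}.
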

\begin{proof}

As we recall in Proposition~\ref{prop:FixedPointSetSphere}, the fixed-point set of an involution on~$S^4$, if it is nonempty, consists of an~$n$-sphere or of an integer homology~$3$-sphere.  
The theorem is then a combination of Theorem~\ref{thm:1DIntro} together with Propositions~\ref{prop:Free}, \ref{prop:3D}, \ref{prop:2D} and Theorem~\ref{thm:0dim}.
We emphasise that the result on free actions is due to Hambleton-Kreck-Teichner~\cite{HambletonKreckTeichnerNonorientable} whereas the case where the fixed-point set is~$0$-dimensional follows from the work of Kwasik-Schultz~\cite{KwasikSchultz}.
\end{proof}

\begin{remark}\label{rem:InvolutionsD4}
We also prove a classification of involutions on $B^4$, which is similar to Theorem~\ref{thm:IntroTest}; see Section \ref{sec:D4} for the details.
\end{remark}

\begin{remark}
\label{rem:InvolutionsS3}
By contrast with Theorem~\ref{thm:IntroTest}, it is known that every locally linear involution of~$S^3$ is conjugate to a linear action; see Proposition~\ref{prop:InvolutionsOnS3} for further details.
\end{remark}

Theorem~\ref{thm:IntroTest} shows that when the fixed-point set is not $2$- or $3$-dimensional,  a locally linear action on $S^4$ is conjugate to a linear involution (assuming the existence of an equivariant tubular neighborhood in the $1$-dimensional case). 
In the $3$-dimensional case, the action is conjugate to a linear involution if and only if the fixed-point set is homeomorphic to $S^3$. 
In the~$2$-dimensional case, if the fixed sphere is additionally required to be unknotted,  then the outcome of Theorem~\ref{thm:IntroTest} can be upgraded using results on $4$-manifolds with $\pi_1 \cong \Z$.
In order to state the result,  we refer to the involution $S^4 \to S^4$ obtained by extending the smooth involution~$\R^4 \to \R^4,(x_1,x_2,y_1,y_2)\mapsto~(x_1,x_2,-y_1,-y_2)$ by fixing the point at infinity as the \emph{standard linear involution}. The fixed-point set of this involution is an unknotted~$2$-sphere,  and we show that this property characterises it among locally linear involutions with~$2$-dimensional fixed-point~sets.

\begin{theorem}
\label{thm:S2FixedIntro}
A locally  linear involution of $S^4$ with $2$-dimensional fixed-point set is conjugate to the standard linear action if and only if its fixed-point set is unknotted.
\end{theorem}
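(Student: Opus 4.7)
Write $\tau_{std}$ for the standard linear involution on $S^4$, and let $\tau$ denote the involution under consideration. The forward direction is immediate: if $\tau$ and $\tau_{std}$ are conjugate by a homeomorphism, then $\Fix(\tau)$ is sent to $\Fix(\tau_{std})$, which is unknotted by construction.

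For the converse, suppose $\Fix(\tau) = S$ is an unknotted $2$-sphere. The plan is to reduce the problem to showing that two free involutions on the complement of a tubular neighborhood of $S$ are conjugate rel boundary. First, since any two unknotted $2$-spheres in $S^4$ are ambiently isotopic, after conjugating $\tau$ by an ambient homeomorphism we may assume $S = \Fix(\tau_{std})$. By Proposition~\ref{prop:23dimETN} (uniqueness of equivariant tubular neighborhoods in the $2$-dimensional fixed-point case), together with the observation that an orientation-preserving involution of $\R^4$ fixing $\R^2$ pointwise must act as $-\id$ on the normal fiber, the equivariant tubular neighborhoods of $S$ for $\tau$ and for $\tau_{std}$ are equivariantly homeomorphic; a further ambient conjugation then arranges $\tau$ and $\tau_{std}$ to coincide on a common equivariant tubular neighborhood $\nu$ of $S$.

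Since $S$ is unknotted, $W := \overline{S^4 \setminus \nu}$ is homeomorphic to $S^1 \times B^3$. Both $\tau|_W$ and $\tau_{std}|_W$ are free, orientation-preserving involutions of $W$ agreeing on $\partial W \cong S^1 \times S^2$. The quotients $X := W/\tau$ and $X_{std} := W/\tau_{std}$ are then compact orientable $4$-manifolds, each with $\pi_1 \cong \Z$, each homotopy equivalent to $S^1$, and with identical boundary. The plan is now to apply the classification of compact topological $4$-manifolds with fundamental group $\Z$ to deduce that $X$ and $X_{std}$ are homeomorphic rel boundary; lifting such a homeomorphism through the double covers $W \to X$ and $W \to X_{std}$ produces an equivariant homeomorphism $W \to W$ rel $\partial W$ conjugating $\tau|_W$ to $\tau_{std}|_W$, and extending by the identity over $\nu$ (where $\tau$ and $\tau_{std}$ already agree) yields the desired global conjugacy on $S^4$.

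The main obstacle is the application of the $\pi_1 \cong \Z$ classification. The equivariant intersection form over $\Z[\Z]$ vanishes in both cases (since $X$ and $X_{std}$ are homotopy equivalent to $S^1$), so the only remaining invariant is the Kirby-Siebenmann obstruction $\ks(X)$. The crux of the proof is therefore verifying $\ks(X) = 0$ --- plausibly via the fact that the double cover $W$ is smoothable combined with the behavior of $\ks$ under finite coverings. Once invariants are matched, the advertised results on $4$-manifolds with $\pi_1 \cong \Z$ provide the required rel-boundary homeomorphism, completing the argument.
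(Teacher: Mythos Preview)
Your overall strategy matches the paper's almost exactly: reduce to the quotient manifolds $X$ and $X_{std}$, observe they are orientable with $\pi_1\cong\Z$ and trivial $\Z[\Z]$-intersection form, and then invoke the classification of $4$-manifolds with infinite cyclic fundamental group (the paper cites \cite[Theorem~1.10]{ConwayPowell}) to produce a rel-boundary homeomorphism that lifts to the desired equivariant homeomorphism.

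The genuine gap is your computation of $\ks(X)$. The suggestion that $\ks(X)=0$ follows ``via the fact that the double cover $W$ is smoothable combined with the behavior of $\ks$ under finite coverings'' does not work: the Kirby--Siebenmann invariant is \emph{not} generally determined by that of a finite cover. The paper itself furnishes a counterexample in a nearby setting (Construction~\ref{cons:1dimKS=1}): there the quotient $X_{\rho_1,\nu_1}$ has $\ks=1$ while its double cover $S^4-\nu_1\simeq S^2\times D^2$ is smoothable. The fake $\R P^4$ is the classical instance of the same phenomenon. So smoothability of $W$ buys you nothing here.

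The paper handles this point directly in Lemma~\ref{lem:HomologyS2Fixed}(3): since $\widetilde{X}\cong S^1\times D^3$ one has $\pi_2(X)=0$, and since $H_2(\Z)=0$ the Hopf sequence gives $H_2(X)=0$; hence the intersection form is trivial, $X$ is spin with $\sigma(X)=0$, and therefore $\ks(X)=0$. This is the argument you should substitute for the covering heuristic. (Two smaller points you should also make explicit: that $\pi_1(X)\cong\Z$, which requires the short argument in Lemma~\ref{lem:HomologyS2Fixed}(2), and that your rel-boundary identification is compatible with the $\Z$-coefficient systems, as needed to apply the classification theorem.)
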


\subsection{Open Questions}
\label{sub:Questions}

We conclude with somes open questions.

\begin{question}(Equivariant tubular neigbhorhoods of $1$-dimensional fixed-point sets).
\label{question:HowMany}
Let~$\rho$ be a locally linear involution on an orientable 4-manifold~$M$ whose fixed-point set contains a $1$-dimensional component $F$.
\begin{enumerate}
  \item Does~$F$ have an equivariant tubular neighborhood?
  \item Does~$F$ have at most two equivariant tubular neighborhoods up to equivariant ambient isotopy?
\end{enumerate}
\end{question}

A positive answer to the first question would make it possible to remove the assumption in Theorem~\ref{thm:1DIntro}.
A positive answer to both questions coupled with (an isotopy analogue of) Theorem~\ref{thm:intrononuniquenessIntro} would imply that the fixed-point set of a locally linear involution as in Question~\ref{question:HowMany} admits exactly two equivariant tubular neigbhorhoods up to equivariant ambient isotopy.
Both questions admit a positive answer in the smooth category: indeed recall that the fixed-point set of a smooth action admits an equivariant tubular neigbhorhood that is unique up to equivariant ambient isotopy~\cite[Chapter VI, Theorem 2.6]{BredonIntroduction}.

Following the strategy from~\cite{BrownGluckI,BrownGluckII,BrownGluckIII}, we believe that a positive answer to the first question of Question~\ref{question:HowMany} would follow from a positive answer to the following equivariant analogue of Quinn's annulus theorem~\cite{QuinnEndsIII}, when the fixed-point set is 1-dimensional.

\begin{question}(An equivariant annulus conjecture).
 \label{q:EAC}
Let~$\rho_{std}$ be a linear involution on~$S^4$,  let~$S^3_1,S^3_2 \subset S^4$ be~$3$-spheres that are fixed setwise by~$\rho_{std},$ and let~$A \subset S^4$ be the region cobounded by~$S^3_1$ and $S^3_2.$
Is~$(A, \rho_{std} \mid_{A})$ equivariantly homeomorphic to $(S^3 \times I, \rho_3 \times \id)$, for some linear involution $\rho_3$ on $S^3$?
\end{question}

Quinn's annulus theorem ensures that $A$ is homeomorphic to $S^3 \times I$~\cite{QuinnEndsIII}.
When the fixed-point set of~$\rho_{std}$ is $0$-dimensional,  Question~\ref{q:EAC} admits a positive answer; see Theorem \ref{thm:KwasikSchultz}. 
When the fixed-point set is $3$-dimensional, the answer is also positive, as proved in Theorem~\ref{thm:EquivariantAnnulusThm}.

We conclude with a question in the smooth category.
\begin{question}
For which $n \geq 0$ are there pairs of smooth involutions of $S^4$ with $n$-dimensional fixed-point sets
that are topologically but not smoothly conjugate?
\end{question}

Free actions are excluded because exotic copies of $\R P^4$ exist: Cappell and Shaneson constructed a smooth manifold $R$ that is homotopy equivalent but not diffeomorphic to $\R P^4$~\cite{CappellShaneson} and Hambleton-Kreck-Teichner later proved that $R$ is homeomorphic to $\R P^4$~\cite{HambletonKreckTeichnerNonorientable}. 

\subsection*{Organization}

After reviewing some definitions related to normal bundles and tubular neighborhoods in Section~\ref{sec:Definitions},
we classify locally linear involutions on $S^4$ in increasing order of difficulty.
Section~\ref{sec:-1Dim} recalls how the work of Hambleton-Kreck-Teichner yields the classification of free actions.
Section~\ref{sec:0Dim} shows how the~$0$-dimensional case follows from the work of Kwasik and Schultz.
Section~\ref{sec:3Dim} dispenses with the $3$-dimensional case, whereas Section~\ref{sec:2Dim} focuses on the~$2$-dimensional.
Section~\ref{sec:1Dim} is concerned with the $1$-dimensional case and proves our main results (namely Theorems~\ref{thm:1DIntro},~\ref{thm:EquivariantSchoenfliesIntro} and~\ref{thm:intrononuniquenessIntro}) assuming the surgery theoretic Theorem~\ref{thm:quotientclassification}.
Finally, Section~\ref{sec:Modified} gives an overview of modified surgery theory, and Section~\ref{sec:ProofTechnical} proves Theorem~\ref{thm:quotientclassification}.

\subsection*{Acknowledgments}
AC was partially supported by the NSF grant DMS~2303674.
We are grateful to Mark Powell, Arunima Ray,  and Danny Ruberman for helpful exchanges concerning the existence of tubular neighborhoods.
\subsection*{Conventions}

We work in the topological category with locally flat embeddings and submanifolds as well as locally linear group actions.
Manifolds are assumed to be compact, connected and oriented unless otherwise specified.

\section{Equivariant tubular neighborhoods}
\label{sec:Definitions}

This section introduces some background related to (equivariant) normal bundles and tubular neighborhoods. 
Section~\ref{sub:LocallyLinear} recalls some definitions and facts related to group actions.
Section~\ref{sub:NormalBundle} is concerned with tubular neighborhoods and Section~\ref{sub:Framings} concludes with a discussion of framings.

\subsection{Locally linear group actions}
\label{sub:LocallyLinear}

A group~$G$ acts \emph{locally linearly} on an~$n$-manifold~$M$ if for every~$x \in M$,  there exists a neighborhood~$U$ containing~$x$ that is~$G_x$-invariant and such that~$U$ is~$G_x$-equivariantly homeomorphic to an open set of~$\R^n$ with a linear action; here $G_x$ denotes the stabilizer of $x$.
This definition is designed to ensure that the fixed-point set of a locally linear action is a (locally flat) submanifold.
When~$G=~\Z/2$,  the stabilizers~$G_x$ are either trivial or equal to~$G$, so the definition simplifies: an involution is locally linear if for every fixed point~$x \in M$, there is a~$\Z/2$-invariant neighborhood that is~$\Z/2$-homeomorphic to an open set of~$\R^n$ with a linear involution.

We also record the following application of Smith theory to locally linear involutions of~$S^4$.
\begin{proposition}
\label{prop:FixedPointSetSphere}
The fixed-point set of a locally linear involution on~$S^4$, if it is nonempty, consists of an~$n$-sphere or of an integer homology~$3$-sphere.  
\end{proposition}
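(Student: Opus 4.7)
The plan is to combine classical Smith theory, which handles the cases of $0$-, $1$-, $2$-, and $4$-dimensional fixed-point sets directly, with a Mayer--Vietoris / van Kampen argument that upgrades the $3$-dimensional case from mod-$2$ to integer homology.

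First I would invoke Smith theory for the $\Z/2$-action on $S^4$: the fixed-point set $F := \Fix(\tau)$ is a $\Z/2$-cohomology $r$-sphere for some $-1 \le r \le 4$. Since $\tau$ is locally linear, $F$ is a locally flat submanifold and thus a disjoint union of closed topological manifolds whose dimensions are determined by the linear normal representations at fixed points. A count of total mod-$2$ Betti numbers then forces $F$ to be either a pair of points (when $r=0$) or a single connected closed $r$-manifold. The cases $r \in \{1,2,4\}$ now follow immediately, since the classifications of closed $1$- and $2$-manifolds imply that a mod-$2$ cohomology $S^1$ or $S^2$ must respectively be $S^1$ or $S^2$, and the $4$-dimensional case is trivial.

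The remaining case is $r = 3$, where Smith theory alone gives only a mod-$2$ cohomology $3$-sphere. Here I would use the equivariant local model: at each $p \in F$, local linearity supplies a $\tau$-invariant neighbourhood equivariantly identified with $(\R^3 \times \R,\, (x,t) \mapsto (x,-t))$, so $F$ has a $\tau$-invariant bicollar along which $\tau$ reflects the normal direction. Alexander duality yields $\tilde{H}_0(S^4 \setminus F;\Z) \cong H^3(F;\Z) = \Z$, so $F$ separates $S^4$ into two components $U$ and $V$; the local reflection forces $\tau$ to swap them. Consequently the normal line bundle of $F$ is trivial (so $F$ is orientable) and $\ol{U} \cong \ol{V}$ via $\tau$, giving a decomposition $S^4 = \ol{U} \cup_F \ol{U}$ that exhibits $S^4$ as the double of $\ol{U}$ along $F$.

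To conclude I would apply van Kampen and Mayer--Vietoris to this decomposition. Van Kampen gives
\[
1 \;=\; \pi_1(S^4) \;=\; \pi_1(\ol{U}) *_{\pi_1(F)} \pi_1(\ol{U}),
\]
and the fold map $\pi_1(\ol{U}) *_{\pi_1(F)} \pi_1(\ol{U}) \to \pi_1(\ol{U})$, which is the identity on each factor and well-defined because both inclusions $\pi_1(F) \to \pi_1(\ol{U})$ agree after identifying the two copies via $\tau$, exhibits $\pi_1(\ol{U})$ as a quotient of the trivial group; hence $\pi_1(\ol{U}) = 1$ and $H_1(\ol{U}) = 0$. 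The Mayer--Vietoris sequence for $S^4 = \ol{U} \cup_F \ol{U}$ then forces $H_1(F) \cong H_1(\ol{U})^2 = 0$, and Poincar\'e duality on the closed orientable $3$-manifold $F$ yields $H_2(F) = 0$, so $F$ is an integer homology $3$-sphere. I expect the main subtlety to be this $3$-dimensional case: Smith theory by itself cannot exclude odd-order torsion in $H_1(F)$, and the argument must use both the codimension-one equivariant structure and the simple-connectedness of $S^4$ to upgrade mod-$2$ information to integer homology.
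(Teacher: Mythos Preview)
Your argument is correct and matches the paper's: Smith theory for the low-dimensional cases, and in dimension~$3$ the local linear model to see that the involution swaps the two complementary regions, the fold map to obtain~$\pi_1(\ol U)=1$, and a Mayer--Vietoris computation to finish (the paper concludes via contractibility of the~$V_i$ while you compute~$H_1(F)=0$ and invoke Poincar\'e duality on~$F$, but this is cosmetic). One minor wrinkle in your write-up: asserting~$H^3(F;\Z)=\Z$ in the Alexander duality step presupposes the orientability of~$F$ that you only deduce afterward---run that step with~$\Z/2$ coefficients instead, or observe that freeness of~$\tilde H_0$ forces~$H^3(F;\Z)$ to be torsion-free and hence~$F$ orientable.
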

\begin{proof}
The local linearity condition ensures that the fixed-point set~$F$ is a submanifold of~$S^4$.
Work of Floyd~\cite[Theorem 4.4]{Floyd} implies that~$b_0^{\Z/2}(F)+b_1^{\Z/2}(F)+\ldots +b_4^{\Z/2}(F) \leq 2$, where we write~$b_i^{\Z_2}(F):=\dim H_i(F;\Z/2)$.
When~$F$ is~$0$-dimensional,  it consists of two points: if the action fixed a single point,  it would restrict to a fixed-point free involution of~$B^4$ which would contradict numerous fixed-point theorems.
When the dimension of~$F$ is at most~$2$, the result follows promptly from the classification of closed~$n$-manifolds, whereas if~$F$ is~$4$-dimensional then a short argument (that we recall in greater generality in Proposition~\ref{prop:23dimETN} below) ensures that~$F=S^4$.

It remains to consider the case where~$F$ is a~$3$-manifold and show that it is a homology~$3$-sphere.
The fixed-point set separates~$S^4$ as~$S^4=V_1 \cup_F V_2$.
The Mayer-Vietoris sequence shows that~$H_2(V_i;\Z/2)=0$ for~$i=1,2$.
\begin{claim}
The involution maps~$V_1$ onto~$V_2$ and vice-versa.
\end{claim}
\begin{proof}[Proof of Claim 1.]
Consider a fixed point~$x$ in the fixed-point set~$F$. 
Since the action is locally linear, we have an order $2$ linear action in a neighborhood of~$x$ with a codimension~$1$ fixed-point set, and hence it is conjugate to a matrix with eigenvalues all~$1$ except a single~$-1$.
Furthermore, since it squares to the identity, it is diagonalizable, so we can take it to be the standard reflection of~$\R^4$ across an~$\R^3$. 
Then we see that locally the two sides of~$F$ are exchanged by the symmetry, from which the claim follows. 
\end{proof}
The claim implies that there is a rel. boundary orientation-reversing homeomorphism~$V_1 \to V_2$.
It follows that~$S^4$ is orientation-preservingly homeomorphic to a~$4$-manifold of the form~$V \cup_F -V$.
The fold map~$r \colon S^4 \to V$ is split by the embedding~$V \to S^4$.
It follows that~$r_* \colon \pi_1(S^4) \to \pi_1(V)$ is a surjection and so~$\pi_1(V)=1$.
Since~$\pi_1(V_i)=1$ and~$H_2(V_i;\Z/2)=0$, it follows that~$H_2(V_i;\Z)=0$ for $i=0,1$.
Thus the~$V_i$ are contractible and so~$F$ is an integer homology sphere.
\end{proof}

\subsection{Equivariant tubular neighbhorhoods}
\label{sub:NormalBundle}

We begin by recalling some definitions in the nonequivariant setting, following mostly the exposition from~\cite[Section 5]{FriedlNagelOrsonPowell}.
Given a closed manifold~$M$, a \emph{closed tubular neigbhorhood} of a closed submanifold~$N^n \subset M^m$ 
consists of a pair~$(\overline{\nu}(N),p \colon \overline{\nu}(N) \to N)$ where~$\overline{\nu}(N)$ is both a neighborhood of~$N$ and a codimension~$0$ submanifold of~$M$, and~$p \colon \overline{\nu}(N) \to N$ is a linear~$D^{m-n}$-bundle such that~$p(x)=x$ for all~$x \in N$.
A \emph{tubular neighborhood} refers to the interior of a closed tubular neighborhood.
Tubular neighborhoods need not exist in general; see e.g.~\cite[Theorem~4]{HirschTubular}. However work of Freedman and Quinn ensures that submanifolds of~$4$-manifolds admit tubular neighborhoods that are unique up to ambient isotopy,  meaning that if~$\overline{\nu}(N)$ and~$\overline{\nu}'(N)$ are tubular neighborhoods, then there is a linear isomorphism~$\Psi \colon \overline{\nu}(N) \to \overline{\nu}'(N)$ such that $\Psi(\overline{\nu}(N))$ and $\overline{\nu}'(N)$ are ambiently isotopic~\cite[Section~9.3]{FreedmanQuinn}; see also the exposition in~\cite[Theorems~5.5 and~5.6]{FriedlNagelOrsonPowell}.

The next definition sets up the equivariant counterparts of these notions.

\begin{definition} \label{def:ETN}
Let~$\rho$ be a locally linear involution on a manifold~$M$, and let~$N$ be a locally flat submanifold which is fixed pointwise by~$\rho$. 
\begin{itemize}
\item A \emph{closed equivariant tubular neighborhood} of~$N$ consists of a closed tubular neighborhood~$(\overline{\nu}(N),p \colon \overline{\nu}(N) \to N)$ together with an order~$2$ linear bundle map~$\rho_\nu \colon \overline{\nu}(N)  \to \overline{\nu}(N)$ that satisfies~$\rho_\nu(x)=x$ for every~$x \in N$.
An \emph{equivariant tubular neighborhood} refers to the interior of a closed equivariant tubular neighborhood.
\item Two closed tubular neighborhoods $\overline{\nu}_0$ and $\overline{\nu}_1$ of $N$ (with inclusion maps $\iota_j \colon \overline{\nu}_i \hookrightarrow N$ for~$j=0,1$)  are \emph{equivariantly ambiently isotopic} if there is an equivariant isomorphism $f \colon \overline{\nu}_0 \to \overline{\nu}_1$ of linear disk bundles such that $\iota_1 \circ f$ and $\iota_0$ are equivariantly ambiently isotopic.
\end{itemize}
\end{definition}

Smooth submanifolds admit equivariant tubular neighborhoods that are unique up to equivariant ambient isotopy~\cite[Chapter VI, Theorem 2.6]{BredonIntroduction}.
We now address
(to the best of our knowledge) what is known about the existence of equivariant tubular neighborhoods in the
topological category in dimension 4. 
To do so, we first state a preliminary theorem, which is a corollary of work of Kwasik-Schultz~\cite[Theorem 2.1]{KwasikSchultz} and Livesay~\cite[Theorem 3]{LivesayFixedPointFree}.

\begin{theorem}
\label{thm:KwasikSchultz}
Any 
free involution~$\rho$ on $S^3 \times I$ that does not swap the boundary components  is conjugate to~$\rho_{std} \times \id_I$, where $\rho_{std}$ denotes the free linear involution on $S^3$ obtained by restricting the action of $-\id_4 \in O(4)$.
\end{theorem}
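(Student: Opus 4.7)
The plan is to reduce the classification of such involutions on $S^3 \times I$ to the classification of involutions on $S^4$ with two fixed points by equivariantly capping off the two boundary components. First, by Livesay's theorem \cite[Theorem~3]{LivesayFixedPointFree} on free involutions of $S^3$, the restrictions $\rho_i := \rho|_{S^3 \times \{i\}}$ are each equivariantly homeomorphic to $(S^3, \rho_{std})$; fix such homeomorphisms $\phi_i \colon (S^3, \rho_i) \to (S^3, \rho_{std})$ for $i=0,1$.

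Second, let $B_0$ and $B_1$ be two copies of $B^4$ equipped with the linear involution $x \mapsto -x$, which fixes only the center and restricts to $\rho_{std}$ on the boundary. Gluing $B_i$ onto $S^3 \times \{i\}$ via $\phi_i$ produces a closed simply connected $4$-manifold with the homology of $S^4$, hence homeomorphic to $S^4$; the involutions paste together into a locally linear involution $\wt\rho$ on $S^4$ with exactly two fixed points (local linearity at these points comes from the linear local model on the $B_i$, while elsewhere it is automatic since $\rho$ acts freely on $S^3 \times I$).

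Now apply Kwasik-Schultz \cite[Theorem~2.1]{KwasikSchultz} to obtain an equivariant homeomorphism $\Phi \colon (S^4, \wt\rho) \to (S^4, \rho_{susp})$, where $\rho_{susp}$ denotes the standard suspension linear involution with two fixed points on $S^4 \subset \R^5$. In the standard model, the complement of two small polar disk neighborhoods of the fixed points is equivariantly $(S^3 \times I, \rho_{std} \times \id_I)$. Finally, I would adjust $\Phi$ via an equivariant ambient isotopy so that it carries $B_0 \cup B_1$ onto the standard polar disk neighborhoods of the fixed points of $\rho_{susp}$; restricting this adjusted $\Phi$ to the complementary $S^3 \times I$ then yields the desired equivariant homeomorphism onto $(S^3 \times I, \rho_{std} \times \id_I)$.

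The main technical step is the last adjustment, which amounts to the uniqueness of closed equivariant disk neighborhoods of an isolated fixed point of a locally linear involution of a $4$-manifold, up to equivariant ambient isotopy. For an isolated fixed point the action is linear on a neighborhood, so any two equivariant closed disk neighborhoods can be nested inside a single equivariant chart and brought into standard position by an equivariant radial isotopy; this is the crux that one must verify carefully to conclude the argument.
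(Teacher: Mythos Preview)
Your approach has a genuine circularity hidden in the final step. The ``uniqueness of closed equivariant disk neighborhoods of an isolated fixed point, up to equivariant ambient isotopy'' that you invoke is precisely what the paper proves in Proposition~\ref{prop:23dimETN} for $n=0$, and that proof uses Theorem~\ref{thm:KwasikSchultz} itself. Your sketch for this step does not work: given two equivariant closed disk neighborhoods $D_1,D_2$ of the same fixed point, you can certainly radially shrink $D_1$ inside itself until $D_1 \subset \operatorname{int}(D_2)$, but this does \emph{not} place both inside a single linear chart, and even if it did, neither would be a round ball there, so no ``radial isotopy'' is available. What you actually face is the shell $D_2 \setminus \operatorname{int}(D_1)$, which by Quinn's annulus theorem is homeomorphic to $S^3 \times I$ and carries a free involution not swapping the boundary components. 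Showing that this involution is standard is exactly the statement of Theorem~\ref{thm:KwasikSchultz}.

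The paper avoids this trap by arguing on the quotient rather than capping off. Since $\rho$ is free and does not swap boundary components, $(S^3\times I)/\rho$ is a manifold with both boundary components homeomorphic to $\R P^3$ (by Livesay), and a short check shows it is an $h$-cobordism. The input from Kwasik--Schultz \cite[Theorem~2.1]{KwasikSchultz} is then the statement that every $h$-cobordism from $\R P^3$ to itself is a product, after which one simply lifts the product structure to the double cover. This is both shorter and logically self-contained: no capping, no equivariant Schoenflies or annulus-type adjustment, and no appeal to uniqueness of equivariant tubular neighborhoods.
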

\begin{proof}
Livesay proved that any free involution on $S^3$ is conjugate to a linear action~\cite[Theorem~3]{LivesayFixedPointFree}.
It follows that~$(S^3 \times \{i\})/\rho$ is homeomorphic to $\R P^3$ for~$i=0,1$.
The orbit set $(S^3 \times I)/\rho$ is then readily seen to be an~$h$-cobordism from $\R P^3$ to itself.
Kwasik and Schultz proved that any~$h$-cobordism from $\R P^3$ to itself is homeomorphic to $\R P^3 \times I$~\cite[Theorem~2.1]{KwasikSchultz}.
Lifting the resulting homeomorphism $(S^3 \times I)/\rho \cong (S^3 \times I)/(\rho_{std} \times \id_I)$ to the universal covers proves the theorem.
\end{proof}
 \color{black}

\begin{proposition}
 \label{prop:23dimETN}
Let~$\rho$ be a locally linear involution on a closed 4-manifold~$M$ with~$n$-dimensional fixed-point set~$F:=\Fix(\rho).$
\begin{itemize}
\item If~$n \neq 1$, then~$F$ admits an equivariant tubular neighborhood.
\item If~$n \neq 1$, then any two equivariant tubular neighborhoods for~$F$ are equivariantly ambiently isotopic.
\end{itemize}
\end{proposition}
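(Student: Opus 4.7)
The plan is to handle the cases $n = 0, 2, 3, 4$ separately. In the cases $n = 2, 3$, I will work in the orbit space $M/\rho$, which turns out to be a topological $4$-manifold; existence and uniqueness of equivariant tubular neighborhoods of $F$ in $M$ will then be reduced to the existence and uniqueness of nonequivariant tubular neighborhoods of $F$ in $M/\rho$.

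The case $n = 4$ is immediate, since by local linearity $F$ is both open and closed in the connected manifold $M$, so $F = M$, $\rho = \id_M$, and $M$ itself is the unique equivariant tubular neighborhood. For $n = 0$, existence is built into the definition of local linearity. To prove uniqueness at a fixed point $p$, I would shrink two given equivariant $D^4$-neighborhoods of $p$ so that their boundary $3$-spheres are disjoint; the region they cobound then carries a free involution that does not swap its boundary components, so Theorem~\ref{thm:KwasikSchultz} supplies an equivariant product structure $S^3 \times I$, from which an equivariant ambient isotopy can be readily assembled.

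For $n = 3$, local linearity makes $\rho$ locally equivariantly modeled on the reflection of $\R^4$ across $\R^3$, and in particular the orbit space $M/\rho$ is a topological $4$-manifold whose boundary is exactly $F$. Applying Brown's collar theorem to $M/\rho$ yields a collar of $F$ in $M/\rho$, unique up to ambient isotopy, and I would lift this collar along the branched double cover $M \to M/\rho$ to obtain both existence and uniqueness of an equivariant bicollar of $F$ in $M$.

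The main case is $n = 2$. Here the local model of $\rho$ near $F$ is $(x, z) \in \R^2 \times \C \mapsto (x, -z)$, and since the squaring map identifies $\C/\{\pm 1\}$ with $\C$, the orbit space $M/\rho$ is again a topological $4$-manifold in which $F$ sits as a closed locally flat codimension-$2$ submanifold. The Freedman-Quinn normal bundle theorem~\cite[Section~9.3]{FreedmanQuinn} then provides a $D^2$-bundle neighborhood $\overline{\nu}(F) \subset M/\rho$, unique up to ambient isotopy, and my proposal is that the preimage of $\overline{\nu}(F)$ under the branched double cover supplies an equivariant $D^2$-bundle neighborhood of $F$ in $M$ on which $\rho$ restricts fiberwise to $z \mapsto -z$, with uniqueness upstairs following from uniqueness downstairs. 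The hardest step I anticipate is verifying that this preimage indeed carries a linear $D^2$-bundle structure compatible with the action: this amounts to analyzing how the fiberwise squaring map interacts with the $O(2)$-valued transition functions of $\overline{\nu}(F)$, and is the one place where the argument cannot directly invoke a smooth equivariant tubular neighborhood theorem.
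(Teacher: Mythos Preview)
Your proposal is correct and follows essentially the same route as the paper: reduce the $n=2,3$ cases to nonequivariant tubular neighborhoods in the quotient manifold $M/\rho$ via Freedman--Quinn (the paper treats $n=3$ this way too rather than invoking the collar theorem separately), handle $n=0$ existence by local linearity and uniqueness by shrinking and applying Theorem~\ref{thm:KwasikSchultz}, and dispose of $n=4$ by connectedness. Two small remarks: in the $n=0$ uniqueness step you should explicitly invoke Quinn's annulus theorem to identify the cobounded region with $S^3\times I$ before applying Theorem~\ref{thm:KwasikSchultz}; and the difficulty you flag in the $n=2$ case (that the preimage of a $D^2$-bundle is again a linear $D^2$-bundle with fiberwise $-\id$) is one the paper simply asserts without further comment.
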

\begin{proof}
When~$n=0$, the fixed-point set admits a tubular neighborhood by definition of local linearity.
We argue that when~$n=4$,  the action is trivial and so there is nothing to prove.
Since~$M$ is connected and~$F:=\Fix(\rho)$ is closed, it suffices to prove that~$F$ is also open.
To see this, note that by definition of local linearity, there is a neighborhood~$U$ of~$p \in F$,  with~$(U,\rho|_U) \cong (\R^4,\id)$,  so~$\rho|_U=\id$ and thus~$p \in U \subset F$.

We now assume that~$n \in \{2,3\}$.
We assert that in these cases, the quotient~$M/\rho$ is a manifold,  possibly with boundary.
It is enough to check what happens in a neighborhood of each fixed point, and since the action is locally linear we can find such a neighborhood where the action is given by~$(w,x,y,z) \mapsto (w,x,-y,-z)$ when~$n = 2$ or by~$(w,x,y,z) \mapsto (w,x,y,-z)$ when~$n = 3$. 
In the former case the quotient is again~$\mathbb{R}^4$ and in the latter case the quotient is the upper half-space. 

Now the lift of a tubular neighborhood in the quotient manifold is an equivariant tubular neighborhood in~$M$. 
Additionally, an ambient isotopy in the quotient lifts to an equivariant ambient isotopy in~$M$. 
Since the work of Freedman-Quinn ensures that tubular neighborhoods exist and are unique in the quotient manifold~\cite[Section~9.3]{FreedmanQuinn} (see also the exposition in~\cite[Theorems~5.5 and~5.6]{FriedlNagelOrsonPowell}),  the result follows.

Finally, we consider the uniqueness statement when~$n=0$.
Let~$\overline{\nu}(x)$,~$\overline{\nu}'(x)$ be closed equivariant tubular neighborhoods of~$x  \in M$. 
We begin by applying an equivariant ambient isotopy to~$\overline{\nu}(x)$ which shrinks it linearly until it is contained in the interior of~$\nu'(x)$. 
Apply the $4$-dimensional annulus theorem to see that this collar is homeomorphic to $S^3 \times I$~\cite{QuinnEndsIII}.
Thus restricting $\rho$ to this collar leads to a free action on $S^3 \times I$, and Theorem \ref{thm:KwasikSchultz} implies that this action is conjugate to~$\rho_{std} \times \id$.
Finally, apply an ambient isotopy to~$\overline{\nu}(x)$ which expands it linearly (and hence equivariantly)
 along this standard~$S^3 \times I$ until we have~$\overline{\nu}'(x)$.
\end{proof}

\begin{remark}
\label{rem:ExteriorsMakesSense}
The exterior of the fixed-point set (i.e. the complement of a tubular neighborhood),  considered as a manifold with a group action up to equivariant homeomorphism,  is only a well-defined notion
 when equivariant tubular neigbhorhoods exist and are unique up to equivariant homeomorphism of pairs.
\end{remark}

\subsection{Equivariant framings}
\label{sub:Framings}

Let~$N^n$ be a closed submanifold of a closed manifold~$M^m$.
Following~\cite[Definition~2.2]{KasprowskiPowellRayTeichner},  a \emph{normal bundle} $(\nu(N),\widetilde{\iota})$ of an embedding $\iota \colon N \hookrightarrow M$ is a vector bundle $\nu(N) \to N$ together with an embedding $\widetilde{\iota} \colon \nu(N)  \hookrightarrow M$ that restricts to $\iota$ on the zero section~$s_0$, i.e. such that $\widetilde{\iota} \circ s_0=\iota$.
A \emph{framing} of~$N$ consists of a bundle isomorphism~$\nu(N) \cong N \times \R^{n-k}.$
As is common in low dimensional topology, we will identify the normal bundle of an embedding~$N \hookrightarrow M$ with its image in $M$.
We also frequently omit $\widetilde{\iota}$ from the notation.

\begin{remark}
Embeddings in $4$-manifolds admit normal bundles~\cite[Theorem 9.3]{FreedmanQuinn} but for uniqueness an additional \emph{extendability} condition is needed; we refer to~\cite[Chapter 9]{FreedmanQuinn} for further details as well as to~\cite[Remark 2.5]{KasprowskiPowellRayTeichner} and~\cite[Chapter 6]{FriedlNagelOrsonPowell} for related discussions.
\end{remark}

We now proceed with the corresponding definitions in the equivariant setting.

\begin{definition}
\label{def:EquivariantFraming}
Let~$\rho$ be a locally linear involution on a manifold~$M$, and let~$N$ be a submanifold which is fixed pointwise by~$\rho$. 
\begin{itemize}
\item An \emph{equivariant vector bundle} for $N$ consists of a vector bundle $(\nu(N),\widetilde{\iota})$ for $N$ together with an order $2$ bundle map $\rho_{\nu} \colon \nu(N) \to \nu(N)$ such that $\rho \circ \widetilde{\iota}=\iota \circ \rho_{\nu}$.
\item An \emph{equivariant framing} of $N$ consists of an equivariant vector bundle together with a choice of a bundle isomorphism
$$(\nu(N),\rho_\nu) \cong (N \times \R^{m-n}, \id_N \times -\id_{\R^{m-n}}).$$
\end{itemize}
\end{definition}

For $n\neq 1$, the existence of equivariant normal bundles can be established as in Proposition~\ref{prop:23dimETN} or by taking the interior of a tubular neighborhood.
Taking the disk bundle of an equivariant normal bundle $\nu(N)$ (identified with its image in $M$) yields an equivariant tubular neighborhood.
In particular,  an equivariant framing induces a linear bundle isomorphism
$$(\overline{\nu}(N),\rho_\nu|)  \cong (N \times D^{m-n}, \id_N \times -\id_{\R^{m-n}}|_{D^{m-n}}).$$
In what follows, we will frequently conflate equivariant framings of normal bundles with linear bundle isomorphisms of their disk bundles.
The latter are equivariant tubular neigbhorhoods.

\color{black}

Orientable submanifolds of~$S^4$ admit trivial normal bundles.
This generalises as follows.
\begin{proposition} \label{prop:trivialnormalbundles}
If~$\rho$ is a locally linear involution on~$S^4$ with~$n$-dimensional fixed-point set~$N$,  then any equivariant normal bundle of~$N$ is equivariantly bundle isomorphic to the product bundle~$(N \times \mathbb{R}^{4-n}, \id_N \times -\id_{\R^{4-n}})$.
In particular fixed-point sets of locally linear involutions on~$S^4$ admit equivariant framings, provided they admit an equivariant tubular neighborhood. 
\end{proposition}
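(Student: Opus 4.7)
The plan is to reduce the statement to triviality of the underlying (non-equivariant) vector bundle via the observation that on an equivariant normal bundle of the fixed-point set, the fibre action is forced to be $-\id$.

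\textbf{Step 1: The fibrewise action of $\rho_\nu$ is $-\id$.} Let $(\nu(N),\rho_\nu)$ be an equivariant normal bundle of $N$. Identifying $\nu(N)$ with a neighbourhood of $N$ in $S^4$ via $\widetilde\iota$, the involution $\rho_\nu$ is the restriction of $\rho$, so its fixed set on $\nu(N)$ equals $\Fix(\rho)\cap\nu(N)=N$, i.e.\ the zero section. Hence on each fibre $\R^{4-n}$ the map $\rho_\nu$ is a linear order-$2$ self-map whose only fixed vector is the origin; such a linear involution must equal $-\id$.

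\textbf{Step 2: Equivariant triviality reduces to ordinary triviality.} Any bundle isomorphism $f\colon\nu(N)\to N\times\R^{4-n}$ is linear on fibres and therefore automatically satisfies $f(-v)=-f(v)$, so $f$ intertwines the two fibrewise $-\id$ actions. Consequently, the existence of an equivariant bundle isomorphism $(\nu(N),\rho_\nu)\cong(N\times\R^{4-n},\id_N\times-\id_{\R^{4-n}})$ is equivalent to $\nu(N)$ being trivial as an ordinary vector bundle.

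\textbf{Step 3: Triviality of $\nu(N)$ case by case.} By Proposition~\ref{prop:FixedPointSetSphere}, $N$ is either $S^n$ with $n\in\{0,1,2,4\}$ or an integer homology $3$-sphere; in all cases $N$ is orientable, and since $S^4$ is orientable $\nu(N)$ is orientable as well. The cases $n=0$ and $n=4$ are immediate. For $n=1$, orientable rank-$3$ vector bundles over $S^1$ are classified by $\pi_0(SO(3))$, which is trivial. For $n=3$, a real line bundle over the homology sphere $N$ is classified by $H^1(N;\Z/2)=0$. For $n=2$, an oriented rank-$2$ bundle over $S^2$ is determined by its Euler number, which by the self-intersection formula equals $[N]\cdot[N]\in H_0(S^4;\Z)$, and this vanishes since $H_2(S^4;\Z)=0$.

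\textbf{Main obstacle.} The genuinely subtle step is Step~1: pinning down that $\rho_\nu$ acts fibrewise as $-\id$ uses essentially both that $N$ \emph{coincides with} (rather than merely sits inside) $\Fix(\rho)$ and that $\rho_\nu$ is a \emph{linear} bundle map, so that a linear involution of a fibre with trivial fixed subspace can only be $-\id$. Once this is in hand, Step~2 is formal and Step~3 is a routine application of standard vector bundle classification.
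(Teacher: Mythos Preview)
Your proof is correct and follows essentially the same approach as the paper: both arguments hinge on the observation that the fibrewise action must be $-\id$ (since a linear involution of a fibre with no nonzero fixed vectors is $-\id$), so that any ordinary trivialisation is automatically equivariant, and then appeal to orientability of $N$ to obtain that the underlying bundle is trivial. The only difference is cosmetic: the paper invokes the general fact that orientable locally flat submanifolds of $S^4$ have trivial normal bundle, whereas you spell this out case by case in Step~3.
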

\begin{proof}
Proposition~\ref{prop:FixedPointSetSphere} implies that~$N$ is orientable.
Since the normal bundle of any orientable locally flat submanifold of~$S^4$ admits a framing,  the equivariant normal bundle on~$N$, after forgetting the symmetry, is bundle isomorphic to~$N \times \mathbb{R}^{4-n}$. 
Push the action forward to an involution~$N \times \mathbb{R}^{4-n} \to N \times \mathbb{R}^{4-n}$ that we again denote by~$\rho$.
By assumption,~$\rho$ fixes~$N$ pointwise and is therefore a bundle map.
We claim that for any~$x \in N$, the action~$\rho_x$ on the fiber~$\mathbb{R}^{4-n}$ is~$-\id$. 
To see this, note that~$\rho_x^2 = \id$ so that the eigenvalues of~$\rho_x$ are all~$\pm1$. Then after putting~$\rho_x$ in Jordan normal form, we can observe that any off-diagonal entries cause~$\rho_x^2 \neq \id$ so that~$\rho_x$ is diagonalizable. Then since~$N$ was the entire fixed-point set of~$\rho$, the only fixed point of~$\rho_x$ is~$0$ so that~$\rho_x = -\id$.
\end{proof}

\section{Free involutions}
\label{sec:-1Dim}

In this section, we recall how a result of Hambleton-Kreck-Teichner leads to the conjugacy classification of free involutions on $S^4$~\cite{HambletonKreckTeichnerNonorientable}. 
Note that free involutions are automatically locally linear.
\begin{proposition}
\label{prop:Free}
There are two conjugacy classes of free  involutions on~$S^4$, distinguished by the Kirby-Siebenmann invariant of their orbit set.
\end{proposition}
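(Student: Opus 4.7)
The plan is to reduce the classification of free involutions on $S^4$ up to conjugacy to the homeomorphism classification of their orbit spaces, which are precisely the homotopy $\R P^4$s. For a free involution $\tau$ on $S^4$, the orbit space $S^4/\tau$ is a closed $4$-manifold with $\pi_1 \cong \Z/2$ whose universal cover is $S^4$, and is therefore a homotopy $\R P^4$. Conversely, any homotopy $\R P^4$ has universal cover homeomorphic to $S^4$, and thus arises as such an orbit space.

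First I would verify that two free involutions $\tau_1,\tau_2$ on $S^4$ are conjugate if and only if the orbit manifolds $S^4/\tau_1$ and $S^4/\tau_2$ are homeomorphic. The ``only if'' direction is immediate: a conjugating homeomorphism descends to the quotients. For the converse, any homeomorphism $S^4/\tau_1 \to S^4/\tau_2$ lifts through the universal covers to a homeomorphism $S^4 \to S^4$ conjugating $\tau_1$ to $\tau_2$, using standard covering space theory together with the fact that both universal covers are canonically identified with $S^4$. Under this bijection, the Kirby-Siebenmann invariant of the orbit manifold is a well-defined invariant of the conjugacy class.

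The final step is to invoke the Hambleton-Kreck-Teichner classification of homotopy $\R P^4$s~\cite{HambletonKreckTeichnerNonorientable}, which asserts that there are exactly two such manifolds up to homeomorphism, distinguished precisely by the Kirby-Siebenmann invariant. The standard one corresponds to the antipodal involution, while the nonstandard one has $\ks = 1$ and is homeomorphic to the Cappell-Shaneson manifold~\cite{CappellShaneson}. The substantive content of the proposition thus lies in this external input; the remaining obstacle is the covering space correspondence above, which is essentially formal.
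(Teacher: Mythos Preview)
Your proposal is correct and follows essentially the same approach as the paper: both establish a bijection between conjugacy classes of free involutions and homeomorphism classes of quotient manifolds (homotopy $\R P^4$s, equivalently closed nonorientable $4$-manifolds with $\pi_1\cong\Z/2$ and $\pi_2=0$), then invoke the Hambleton--Kreck--Teichner classification. The paper additionally spells out two small points you leave implicit: the Lefschetz fixed-point theorem forces a free involution on $S^4$ to be orientation-reversing (so the quotient is nonorientable), and Freedman's theorem is what guarantees that the universal cover of a homotopy $\R P^4$ is actually homeomorphic to $S^4$.
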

\begin{proof}
We first note a free involution on an even dimensional sphere is orientation reversing by the Lefshetz fixed-point theorem: if it were orientation preserving it would induce multiplication by~$+1$ on top dimensional homology and then there would have to be a fixed point (two in fact).

We assert that mapping an involution to its orbit space defines a bijection between conjugacy classes of free involutions on~$S^4$ and homeomorphism classes of closed nonorientable~$4$-manifolds~$Z$ with~$\pi_1(Z)\cong \Z/2$ and~$\pi_2(Z)=0$.
This map is well defined because,  if~$G$ is finite,  then~$X/G$ is again a manifold and~$X \to X/G$ is a regular~$G$-cover.
The inverse is obtained by mapping a manifold~$Z$ in this set to the deck transformation on its universal cover~$\widetilde{Z}$: this map is well defined because since~$\widetilde{Z}$ is simply-connected and~$\pi_2(\widetilde{Z})=\pi_2(Z)=0$,  Freedman's theorem implies that~$\widetilde{Z}\cong S^4$~\cite{Freedman}.
This proves the assertion.

The proposition now follows from work of Hambleton-Kreck-Teichner~\cite[Theorem 3]{HambletonKreckTeichnerNonorientable} according two which there are only two closed nonorientable~$4$-manifolds~$Z$ with~$\pi_1(Z)\cong \Z/2$ and~$\pi_2(Z)=0$, namely~$\R P^4$ and its fake copy~$\mathcal{R}$, which satisfies~$\ks(\mathcal{R})=1$.
\end{proof}

\section{0-dimensional fixed-point sets}
\label{sec:0Dim}

The aim of this section is to show how Theorem \ref{thm:KwasikSchultz}, a corollary of results of Livesay and Kwasik-Schultz,  implies that that up to conjugacy there is a unique involution of~$S^4$ whose fixed-point set is~$0$-dimensional.
 
 \begin{theorem}
 \label{thm:0dim}
Any two locally linear involutions of~$S^4$ with 0-dimensional fixed-point sets are conjugate.
\end{theorem}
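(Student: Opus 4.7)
The plan is to show that any locally linear involution $\tau$ of $S^4$ with $0$-dimensional fixed-point set is conjugate to a standard model assembled from three explicit equivariant pieces: two copies of $(D^4, -\id)$ and one copy of $(S^3 \times I, \rho_{std} \times \id_I)$.

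First, by Proposition~\ref{prop:FixedPointSetSphere} the fixed-point set $\Fix(\tau)$ consists of exactly two points $p$ and $q$. By definition of local linearity together with Proposition~\ref{prop:23dimETN}, each fixed point admits a closed equivariant tubular neighborhood equivariantly homeomorphic to $(D^4, -\id)$, and any two such neighborhoods are equivariantly ambiently isotopic. Choose small, disjoint such neighborhoods $B_p, B_q$ and set $M := S^4 \setminus \operatorname{int}(B_p \cup B_q)$. Underlyingly, $M$ is a cobordism between two copies of $S^3$, so by Quinn's $4$-dimensional annulus theorem $M$ is homeomorphic to $S^3 \times I$. Since $\Fix(\tau) \subset B_p \cup B_q$, the restriction $\tau|_M$ is free. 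Because $p \ne q$ are distinct fixed points, for sufficiently small neighborhoods we have $\tau(B_p) = B_p$ and $\tau(B_q) = B_q$, so $\tau|_M$ does not swap the boundary components of $M$.

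Next, apply Theorem~\ref{thm:KwasikSchultz} to conclude that $(M, \tau|_M)$ is equivariantly homeomorphic to $(S^3 \times I, \rho_{std} \times \id_I)$. Combining this with the standard local models near $p$ and $q$, the pair $(S^4, \tau)$ decomposes equivariantly as
\[
(D^4, -\id) \cup_f (S^3 \times I, \rho_{std} \times \id_I) \cup_g (D^4, -\id),
\]
where $f$ and $g$ are equivariant self-homeomorphisms of $(S^3, \rho_{std})$ along the two boundary spheres. To finish the proof, given two such involutions $\tau_1, \tau_2$ one assembles a global equivariant homeomorphism $(S^4, \tau_1) \to (S^4, \tau_2)$ piece-by-piece from the above identifications; the only ambiguity consists of differences between the two choices of $f, g$, and these can be absorbed into the balls via the radial extension $r\cdot x \mapsto r \cdot h(x)$ for $x \in S^3$, $r \in [0,1]$, which is equivariant because $-\id$ commutes with scaling.

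The main obstacle is the invocation of Theorem~\ref{thm:KwasikSchultz} together with the $4$-dimensional annulus theorem, which are the hard topological inputs (and are already cited and used in Proposition~\ref{prop:23dimETN}). Once these are available, the boundary-gluing step is routine thanks to the radial cone construction, since the linear action $-\id$ on $D^4$ is scale-invariant.
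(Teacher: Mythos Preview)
Your proposal is correct and follows essentially the same route as the paper: choose equivariant tubular neighborhoods of the two fixed points via Proposition~\ref{prop:23dimETN}, apply Quinn's annulus theorem to identify the complement with $S^3 \times I$, use Theorem~\ref{thm:KwasikSchultz} to standardize the free action there, and extend over the balls by coning. The only cosmetic difference is that you pass through a fixed ``standard model'' while the paper compares two involutions directly, but the ingredients and logic are the same.
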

\begin{proof}
Let~$\rho_0,\rho_1 \colon S^4 \to S^4$ be locally linear involutions with fixed-point sets each consisting of two points. 
For each~$\rho_i$, {use Proposition~\ref{prop:23dimETN} to choose equivariant tubular neighborhoods~$\nu_i$ of the two fixed points. 
By Quinn's annulus theorem,~$S^4 - \nu_i$ is homeomorphic to~$S^3 \times I$ (see~\cite{QuinnEndsIII}), and since~$\nu_i$ is~$\rho_i$-invariant,~$\rho_i$ restricts to a free symmetry on~$S^4 - \nu_i$ that does not swap the boundary components. Applying Theorem \ref{thm:KwasikSchultz} (a corollary of \cite[Theorem 2.1]{KwasikSchultz} and~\cite[Theorem~3]{LivesayFixedPointFree}), we have an equivariant homeomorphism $f \colon (S^4 - \nu_0, \rho_0) \to (S^4 - \nu_1, \rho_1)$.

Restrict this homeomorphism to~$\partial S^4 - \nu_0=\partial \overline{\nu}_0$ and observe that any equivariant homeomorphism~$\partial \overline{\nu}_0 \to \partial \overline{\nu}_1$ extends to an equivariant homeomorphism~$\overline{\nu}_0 \to \overline{\nu}_1$ by taking the cone (to see this,  recall that~$(\overline{\nu}_i,\rho_i|) \cong (B^4,\rho_{std})$).}
Thus~$f$ extends to an equivariant homeomorphism~$(S^4, \rho_0) \to (S^4, \rho_1)$. 
\end{proof}

\section{3-dimensional fixed-point sets}
\label{sec:3Dim}

Next, we consider locally linear involutions of $S^4$ with a~$3$-dimensional fixed-point set.
The following proposition is known to the experts (for example it is stated in~\cite[Section~3.1]{EdmondsSurvey}).
We provide a proof here for convenience.
\begin{proposition}
\label{prop:3D}
The set of homeomorphism classes of homology~$3$-spheres is in bijective correspondence with the set of conjugacy classes of locally linear involutions on~$S^4$ with~$3$-dimensional fixed-point set.
\end{proposition}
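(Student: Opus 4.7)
The plan is to show that the map sending a conjugacy class~$[\rho]$ to the homeomorphism class of~$\Fix(\rho)$ is a well-defined bijection onto homology~$3$-spheres. Well-definedness of the map on conjugacy classes is immediate, and the target set is correct by Proposition~\ref{prop:FixedPointSetSphere}, so the content lies in surjectivity and injectivity.

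For surjectivity, I would use a doubling construction. Given a homology~$3$-sphere~$\Sigma$, Freedman's theorem furnishes a contractible topological~$4$-manifold~$V$ with~$\partial V = \Sigma$. The double~$DV := V \cup_\Sigma V$ is simply-connected with the homology of~$S^4$ by Mayer--Vietoris and contractibility of~$V$, hence is a homotopy~$4$-sphere, and therefore~$DV \cong S^4$ by Freedman. The tautological swap involution exchanging the two copies of~$V$ fixes~$\Sigma$ pointwise and is locally linear, since a collar of~$\partial V$ in~$DV$ is modeled on~$\mathbb{R}^3 \times \mathbb{R}$ with the reflection~$(x,t) \mapsto (x,-t)$.

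For injectivity, let~$\rho_1, \rho_2$ be two such involutions with fixed-point sets~$F_1,F_2$ both homeomorphic to a homology~$3$-sphere~$\Sigma$. The proof of Proposition~\ref{prop:FixedPointSetSphere} shows that each~$F_i$ separates~$S^4$ into two contractible pieces~$V_i^\pm$ that~$\rho_i$ interchanges; equivalently, the orbit space~$S^4/\rho_i \cong V_i^+$ is a contractible~$4$-manifold with boundary~$F_i$. I would then invoke Freedman's uniqueness for contractible topological~$4$-manifolds with a prescribed homology~$3$-sphere boundary: any homeomorphism~$\psi \colon F_1 \to F_2$ extends to a homeomorphism~$\phi^+ \colon V_1^+ \to V_2^+$. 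Setting~$\phi^- := \rho_2 \circ \phi^+ \circ \rho_1^{-1} \colon V_1^- \to V_2^-$, a direct check shows~$\phi^+|_{F_1} = \phi^-|_{F_1} = \psi$ (because~$\psi(F_1) \subset F_2$ is~$\rho_2$-fixed), so the two pieces glue to a self-homeomorphism of~$S^4$ that conjugates~$\rho_1$ to~$\rho_2$ by construction.

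The main obstacle is the rel-boundary uniqueness statement for contractible topological~$4$-manifolds with given homology~$3$-sphere boundary, together with the extendibility of a boundary homeomorphism to the interior. This rests on Freedman's topological surgery and the simply-connected topological~$s$-cobordism theorem in dimension~$5$. Once this uniqueness is granted, injectivity reduces to the gluing above and surjectivity is an immediate consequence of the doubling picture.
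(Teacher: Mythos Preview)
Your proof is correct and follows essentially the same route as the paper: both rely on Freedman's existence and rel.~boundary uniqueness of contractible fillings of homology~$3$-spheres together with the doubling picture. The only cosmetic difference is direction: the paper builds the map $Y \mapsto [\rho]$ by doubling and then checks surjectivity (which amounts to your injectivity argument, carried out there more tersely), whereas you build the inverse map $[\rho] \mapsto \Fix(\rho)$ and make the conjugating homeomorphism explicit via $\phi^- = \rho_2 \circ \phi^+ \circ \rho_1^{-1}$.
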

\begin{proof}
Work of Freedman ensures that any integer homology~$3$-sphere~$Y$ bounds a contractible~$4$-manifold that is unique up to homeomorphism rel.~boundary~\cite[Theorem 1.4$'$]{Freedman}.
Another application of Freedman's work ensures that doubling this contractible manifold gives a~$4$-manifold homeomorphic to~$S^4$.
Interchanging the hemispheres gives rise to an an involution of~$S^4$ with~$Y$ as its fixed-point set.
This defines a map from the set of homeomorphism classes of homology~$3$-spheres to the set of conjugacy classes of locally linear involutions on~$S^4$ with~$3$-dimensional fixed-point set.
For injectivity, note that if~$Y_1$ and~$Y_2$ are integer homology~$3$-spheres for which these involutions are conjugate, then their fixed-point sets are homeomorphic, i.e.~$Y_1$ and~$Y_2$ are homeomorphic.

We prove surjectivity.
The fixed-point set separates~$S^4$ as~$S^4=V_1 \cup_Y V_2$.
Proposition~\ref{prop:FixedPointSetSphere} (and the claim within its proof) shows that the involution maps~$V_1$ homeomorphically onto~$V_2$ (and vice-versa) and that~$Y$ is an integer homology~$3$-sphere.
It follows that~$S^4$ is orientation-preservingly homeomorphic to a~$4$-manifold of the form~$V \cup_Y -V$.
The action is therefore obtained by the doubling process described above, therefore proving surjectivity.
\end{proof}

\section{2-dimensional fixed-point sets}
\label{sec:2Dim}

This section considers locally linear involutions of $S^4$ with $2$-dimensional fixed-point sets and proves Theorem~\ref{thm:S2FixedIntro} from the introduction.
Here and in what follows, we use Proposition~\ref{prop:23dimETN} to guarantee the existence of a unique equivariant tubular neighborhood for~$2$-dimensional fixed-point sets of locally linear involutions; in particular the exterior of the fixed-point set is a well-defined notion in this setting; recall Remark~\ref{rem:ExteriorsMakesSense}.
Proposition~\ref{prop:trivialnormalbundles} ensures that this equivariant tubular neighborhood is equivariantly framable.

\begin{proposition}
\label{prop:2D}
Mapping a~$2$-knot to its~$2$-fold branched cover~$(\Sigma_2(S),\rho)$ defines a bijection from the set of isotopy classes $2$-knots~$S \subset S^4$ with~$\Sigma_2(S)\cong S^4$ to the set of conjugacy classes of locally linear involutions~$\rho$ on~$S^4$ with~$2$-dimensional fixed-point set.
\end{proposition}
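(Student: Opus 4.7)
The strategy is to verify that the assignment $[S] \mapsto [(\Sigma_2(S), \rho_S)]$ is well-defined, injective, and surjective. Throughout I would use Proposition~\ref{prop:23dimETN} to access the unique equivariant tubular neighborhoods of the fixed-point sets, so that the branched cover and quotient constructions below are well-posed.

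Well-definedness and injectivity go together: a pair-homeomorphism $(S^4, S_0) \cong (S^4, S_1)$ lifts uniquely to an equivariant homeomorphism $(\Sigma_2(S_0), \rho_0) \cong (\Sigma_2(S_1), \rho_1)$, and conversely any equivariant homeomorphism of the branched covers descends to a pair-homeomorphism after quotienting (the quotient map identifies the fixed-point set with the branch locus). The identification of pair-homeomorphism with ambient isotopy of locally flat $2$-knots in $S^4$ then passes the correspondence to isotopy classes.

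For surjectivity, let $\rho$ be a locally linear involution on $S^4$ with $2$-dimensional fixed-point set $F$. By Proposition~\ref{prop:FixedPointSetSphere}, $F \cong S^2$. The heart of the argument is to show that $X := S^4/\rho$ is homeomorphic to $S^4$, for then the image of $F$ in $X$ is a $2$-knot whose double branched cover reproduces $(S^4, \rho)$. To identify $X$ with $S^4$, I would check:
\begin{enumerate}
\item $X$ is a closed topological $4$-manifold, because locally near $F$ the action is $(x, y, a, b) \mapsto (x, y, -a, -b)$, whose quotient on the normal $\R^2$ is identified with $\R^2$ via $z \mapsto z^2$;
\item $X$ is simply-connected: any loop based at a point of $F \subset X$ lifts to a loop in $S^4$ (since $\rho$ fixes the chosen basepoint), which is null-homotopic and descends;
\item $X$ is orientable, since $\dim F$ even forces $\rho$ to be orientation-preserving and the local branched-cover model preserves orientations on quotients;
\item $\chi(X) = 2$, from the branched-cover formula $\chi(S^4) = 2\chi(X) - \chi(F) = 2\chi(X) - 2$.
\end{enumerate}
Poincar\'e duality and universal coefficients then force $H_\ast(X;\Z) \cong H_\ast(S^4;\Z)$, so $X$ is a homotopy $4$-sphere, and Freedman's theorem yields $X \cong S^4$.

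The principal obstacle is controlling the topology of the quotient $X$: once $X \cong S^4$ is established, the rest is essentially formal. It remains only to verify that the double branched cover of the resulting $2$-knot $S \subset X$ reproduces $(S^4, \rho)$, which follows from the uniqueness of the index-$2$ subgroup of $\pi_1(X \setminus S)$ (since $H_1(X \setminus S) \cong \Z$ by Alexander duality), coupled with the fact that this particular index-$2$ cover is precisely the restriction of $\rho$ to the complement of its fixed-point set.
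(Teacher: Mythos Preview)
Your proposal is correct. For injectivity you follow the same line as the paper: an equivariant homeomorphism of branched covers descends to a pair-homeomorphism, and one then invokes that every orientation-preserving self-homeomorphism of $S^4$ is isotopic to the identity---the paper cites Quinn explicitly for this step, and you should too.

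Your treatment of surjectivity, however, is more direct than the paper's. To show that $X:=S^4/\rho$ is simply-connected, the paper (in Lemma~\ref{lem:SimplyConnectedOrbifold}) first establishes $H_1(X)=0$ via a Serre spectral-sequence computation on the fixed-point complements, and then combines this with the surjection $\pi_1^{\mathrm{orb}}(X)\twoheadrightarrow\pi_1(X)$ (using $\pi_1^{\mathrm{orb}}(X)\cong\Z/2$) to rule out the remaining possibility $\pi_1(X)\cong\Z/2$. Your path-lifting argument bypasses both steps: a loop in $X$ based at a branch point has a lift to $S^4$ which is automatically a loop, since the basepoint has a unique preimage, and that lift is null-homotopic. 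For the remaining homology the paper invokes the transfer isomorphism $H_i(X;\Q)\cong H_i(S^4;\Q)^{\Z/2}$, whereas you use the branched-cover Euler-characteristic formula together with Poincar\'e duality and universal coefficients. Both routes finish with Freedman's theorem. Your argument is shorter and avoids orbifold and spectral-sequence machinery; the one point worth expanding is the existence of a continuous path-lift through the branch locus, which follows from the local model $z\mapsto z^2$ but is not literally the standard covering-space lifting lemma.
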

\begin{proof}
We begin by proving the injectivity of this assignment.
An orientation-preserving homeomorphism~$\Sigma_2(S_1) \cong \Sigma_2(S_2)$ that intertwines the deck transformations descends to an orientation-preserving homeomorphism~$(S^4,S_1) \cong (S^4,S_2)$.
Since Quinn proved that the mapping class group of~$S^4$ is trivial~\cite{Quinn}, this implies that~$S_1$ and~$S_2$ are isotopic.

For surjectivity, start from a locally linear involution~$\rho \colon S^4 \to S^4$ with~$2$-dimensional fixed-point set~$F$ (recall that necessarily~$F \cong  S^2$) and consider the image of~$F$ in~$X:=S^4/\rho$.
Lemma~\ref{lem:SimplyConnectedOrbifold} below ensures that~$X \cong S^4$ and
it follows that the resulting~$2$-knot in~$X \cong S^4$ has~$2$-fold branched cover homeomorphic to~$S^4$ with~$\rho$ as its deck transformation generator.
\end{proof}

\begin{remark}
Gordon proved that there are infinitely many 2-knots which are the fixed-point set of a locally linear involution on $S^4$~\cite{Gordon}.
\end{remark}

We prove the fact that was stated without proof during the previous argument.

\begin{lemma}
\label{lem:SimplyConnectedOrbifold}
If~$\rho\colon S^4 \to S^4$ is a locally linear involution with a 2-dimensional fixed-point set,  then~$X = S^4/\rho$ is homeomorphic to~$S^4$.
\end{lemma}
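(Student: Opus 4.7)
The plan is to verify that $X := S^4/\rho$ is a closed, simply-connected topological $4$-manifold with Euler characteristic $2$, and then to invoke Freedman's classification of simply-connected closed topological $4$-manifolds.

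First, I would show that $X$ is a closed, orientable topological $4$-manifold. Writing $q\colon S^4 \to X$ for the quotient map and $F' := q(F)$, the action is free on $S^4 \setminus F$, so $X \setminus F'$ is automatically a manifold. At a fixed point, local linearity supplies a chart $\R^4 = \R^2 \times \R^2$ with $\rho(w,x,y,z)=(w,x,-y,-z)$, whose quotient $\R^2 \times (\R^2/\{\pm 1\})$ is identified with $\R^4$ via $z \mapsto z^2$ on the second $\R^2$-factor. Since the local model has determinant $+1$, $\rho$ is orientation-preserving and $X$ is orientable. A cell count gives $\chi(S^4) = 2\chi(X) - \chi(F)$, whence $\chi(X) = 2$.

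The hard step is to prove $\pi_1(X) = 1$. Using Propositions~\ref{prop:23dimETN} and~\ref{prop:trivialnormalbundles}, I would equivariantly identify a closed tubular neighborhood of $F \cong S^2$ with $N = F \times D^2$, with $\rho$ acting trivially on $F$ and as $-\id$ on $D^2$; set $E := S^4 \setminus \mathrm{int}(N)$. Van Kampen applied to $S^4 = N \cup E$, combined with $\pi_1(N)=1$, shows that $\pi_1(E)$ is normally generated by the meridian $m$ of $F$. The involution is free on $E$, yielding a $2$-fold cover $E \to \bar E := E/\rho$, while $\bar N := N/\rho$ is identified with $S^2 \times D^2$ via squaring on the second factor. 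The essential observation is that a meridian $\bar m$ of $F' \subset X$ lying in $\bar E$ satisfies $\bar m^2 = m$ in $\pi_1(\bar E)$, reflecting that $D^2 \to D^2/\{\pm 1\}$ restricts to a $2$-fold cover on boundary circles.

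Van Kampen applied to $X = \bar N \cup \bar E$ then gives $\pi_1(X) = \pi_1(\bar E)/\langle\langle \bar m\rangle\rangle$. Setting $\bar m = 1$ forces $m = \bar m^2 = 1$; since $\pi_1(E)$ is normally generated by $m$, its image in the quotient becomes trivial. The covering short exact sequence $1 \to \pi_1(E) \to \pi_1(\bar E) \to \Z/2 \to 1$ then identifies the remaining quotient $\Z/2$ with the subgroup generated by $\bar m$, which is also killed, giving $\pi_1(X) = 1$. Hence $X$ is a closed simply-connected $4$-manifold with $\chi(X) = 2$, so $H_2(X) = 0$ by Poincar\'e duality; Freedman's classification~\cite{Freedman} then forces $X \cong S^4$.
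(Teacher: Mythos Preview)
Your proof is correct and takes a genuinely different route from the paper's. The paper first establishes $H_1(X)=0$ via a Serre spectral sequence argument comparing the inclusion $\partial X_F \hookrightarrow X_F$ (where $X_F$ is the complement of a tubular neighborhood of the branch locus), then bounds $|\pi_1(X)|\le 2$ using the natural surjection $\pi_1^{orb}(X)\twoheadrightarrow\pi_1(X)$ from the orbifold fundamental group, and finally computes Betti numbers via the transfer isomorphism $H_*(X;\Q)\cong H_*(S^4;\Q)^{\Z/2}$. Your approach is more elementary: you obtain $\pi_1(X)=1$ directly from van Kampen by tracking the relation $\bar m^2=m$ between meridians under the branched covering, and you replace the rational-cohomology computation with a one-line Euler-characteristic count. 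The paper's method packages information in a way that generalises more readily (the orbifold argument and the transfer isomorphism work for arbitrary finite $G$), while yours is cleaner for the case at hand and avoids both spectral sequences and orbifold machinery. One small remark: you do not actually need to argue orientability separately, since $\pi_1(X)=1$ already forces $w_1(X)=0$.
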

\begin{proof}
We claim that~$H_1(X)=0$.
Write~$S^4_F$ for the complement of an equivariant tubular neigbhorhood~$\nu(F)$ of~$F$ and~$X_F$ for the complement of the projection of this neigbhorhood to~$X$.
The projection~$p \colon S^4_F \to X_F$ is a~$2$-fold covering map,  and thus so is~$\partial S^4_F \to \partial X_F.$
Since~$\nu(F)$ is equivariantly framable, both $\partial S^4_F$ and $\partial X_F$ are homeomorphic to $S^1 \times S^2$ and this implies that~$H_1(\partial X_F)\cong \Z$, generated by a meridian of~$p(F) \subset X$.
Since the inclusion induces an isomorphism~$H_1(\partial S^4_F) \xrightarrow{\cong} H_1(S^4_F)$, the naturality of the Serre spectral sequence (together with~$H_2(\Z/2)=0$) yields the following commutative diagram in which the vertical maps are inclusion induced:
$$
\xymatrix@R0.5cm{
0\ar[r]&{\overbrace{H_0(\Z/2;H_1(\partial S^4_F))}^{\in{\{\Z/2,\Z\}}}}\ar[r]\ar[d]^\cong&H_1(\partial X_F)\ar[r]\ar[d]&\Z/2\ar[d]^{\cong}\ar[r]&0\\
0\ar[r]&{\underbrace{H_0(\Z/2;H_1(S^4_F))}_{\in{\{\Z/2,\Z\}}}}\ar[r]&H_1(X_F)\ar[r]&\Z/2\ar[r]&0.\\
}
$$
For~$Y \in \{\partial S^4_F,S^4_F\}$, the leftmost group is~$\Z$ (resp. ~$\Z/2$) if~$\Z/2$ acts trivially (resp.  nontrivially) on~$H_1(Y)$.
Regardless,  this diagram implies that~$H_1(\partial X_F) \to H_1(X_F)$ is an isomorphism.
Thus~$H_1(X_F) \cong \Z$ is generated by a meridian of~$p(F) \subset X$ and attaching~$p(\overline{\nu}(F))$ back kills this meridian. We conclude that~$H_1(X)=0$, as claimed.

Next we argue that~$X$ is simply-connected.
Let~$p\colon S^4 \to X$ be the projection map, and note that~$X$ is naturally an orbifold with orbifold fundamental group~$\pi_1^{orb}(X) \cong \mathbb{Z}/2$ since it is double covered by~$S^4$; see e.g.~\cite[Proposition 2.3.5 item (i)]{caramello}.
Recall that there is a natural surjection~$\pi_1^{orb}(X) \to \pi_1(X)$ (see e.g. \cite[Page 24]{caramello} or \cite{Haefliger}). 
Thus~$\pi_1(X)$ is either trivial or has order $2$.
Since the claim ensures that~$H_1(X) = 0$, we conclude that~$\pi_1(X) = 1$.

The isomorphism~$H_i(X;\Q) \cong H_i(S^4;\Q)^{\Z/2}$ (see e.g.~\cite[Chapter  III, Theorem 7.2]{BredonIntroduction}) ensures that~$b_2(X)=0=b_3(X)$.
Here,  $H_i(S^4;\Q)^{\Z/2}$ denotes the fixed-point set of the action.
Since~$X$ is a closed~$4$-manifold (this was argued during the proof of Proposition~\ref{prop:23dimETN}),  it follows that~$X$ is a homotopy sphere and therefore, by work of Freedman~\cite{Freedman},  it is homeomorphic to a~$4$-sphere.
\end{proof}

Proposition~\ref{prop:2D} shows that a complete description of the set of conjugacy classes of locally linear involutions of~$S^4$ with~$2$-dimensional fixed-point set is challenging to obtain.
The goal of the remainder of this section is to prove that if we require that the complement of the fixed-point set have abelian fundamental group, then only one involution remains.

\begin{lemma}
\label{lem:HomologyS2Fixed}
Let~$\rho \colon S^4 \to S^4$ be a locally linear involution with fixed-point set~$\Fix(\rho)$ an unknotted~$2$-sphere.
The orbit set~$X_\rho:=(S^4 - \nu(\Fix(\rho)))/\rho$ is orientable and satisfies the following properties:
\begin{enumerate}
\item The choice of an equivariant framing~$\nu(\Fix(\rho)) \cong S^2 \times \R^2$\color{black} 
determines a homeomorphism 
$$f_\rho \colon \partial X_\rho \xrightarrow{\cong} S^1 \times S^2.$$
\item The fundamental group of~$X_\rho$ is freely generated by the projection of a meridian to~$\Fix(\rho)$.
\item The Kirby-Siebenmann invariant of~$X_\rho$ is trivial. That is~$\ks(X_\rho)=0$.
\end{enumerate}
\end{lemma}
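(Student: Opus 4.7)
The plan is to handle the three items in turn, exploiting the identification of $X_\rho$ with the exterior of a $2$-sphere in the quotient space.

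Since $\Fix(\rho)$ is $2$-dimensional, the involution $\rho$ is orientation-preserving (as observed in the introduction), so its free restriction to $Y_\rho := S^4 \setminus \nu(\Fix(\rho))$ is orientation-preserving; hence $X_\rho$ is orientable. For (1), the equivariant framing furnishes a linear bundle isomorphism
$$(\overline{\nu}(\Fix(\rho)), \rho|) \cong (S^2 \times D^2,\, \id_{S^2} \times -\id_{D^2}).$$
Restricting to the boundary, $(\partial Y_\rho, \rho|) \cong (S^2 \times S^1,\, \id_{S^2} \times a)$, where $a$ denotes the antipodal involution on $S^1$. Passing to quotients gives $\partial X_\rho \cong S^2 \times (S^1/a) \cong S^2 \times S^1$, which is the desired homeomorphism $f_\rho$.

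For (2), the unknottedness of $\Fix(\rho)$ gives $Y_\rho \cong S^1 \times D^3$, so $\pi_1(Y_\rho) \cong \Z$ is generated by a meridian $\mu$ of $\Fix(\rho)$. The free $2$-fold cover $p \colon Y_\rho \to X_\rho$ yields a short exact sequence
$$1 \to \Z \to \pi_1(X_\rho) \to \Z/2 \to 1,$$
whose only solutions are $\Z$, $\Z \oplus \Z/2$, and the infinite dihedral group. To select the correct one, I will compute $H_1(X_\rho)$: by Lemma~\ref{lem:SimplyConnectedOrbifold}, $S^4/\rho \cong S^4$, so $X_\rho$ is the exterior of the locally flat $2$-sphere $S' := p(\Fix(\rho)) \subset S^4$, and Alexander duality gives $H_1(X_\rho) \cong \Z$. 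Because the remaining two candidates have non-cyclic abelianization, we conclude $\pi_1(X_\rho) \cong \Z$, generated by a meridian of $S'$, which is precisely the projection of $\mu$.

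For (3), I combine (2) with Freedman's unknotting theorem for $2$-knots: a locally flat $2$-sphere in $S^4$ whose complement has cyclic fundamental group is topologically unknotted. Applied to $S'$, this yields $X_\rho \cong S^1 \times D^3$, whence $\ks(X_\rho) = 0$. The main obstacle is (3), which rests on Freedman's substantial $\pi_1 = \Z$ unknotting theorem for $2$-knots; by contrast, (1) and (2) are routine once Lemma~\ref{lem:SimplyConnectedOrbifold} is in hand, with the only subtlety in (2) being that the set-theoretic image of $\mu$ in $X_\rho$ is doubly covered by $\mu$, so one must be careful about whether the generator is $p_*[\mu]$ or its ``square root'' $[\bar\mu]$ in $\pi_1(X_\rho) \cong \Z$.
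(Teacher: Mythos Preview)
Your argument is correct, but parts~(2) and~(3) take a different route from the paper's.

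For~(2), the paper compares the short exact sequences of the double covers $\widetilde{X}_\rho \to X_\rho$ and $\partial\widetilde{X}_\rho \to \partial X_\rho$ in a commutative square; since the inclusion $\pi_1(\partial\widetilde{X}_\rho) \to \pi_1(\widetilde{X}_\rho)$ is an isomorphism (both generated by the meridian), the five lemma yields $\pi_1(\partial X_\rho) \xrightarrow{\cong} \pi_1(X_\rho) \cong \Z$. Your approach instead identifies $X_\rho$ with a $2$-knot exterior in $S^4/\rho \cong S^4$ via Lemma~\ref{lem:SimplyConnectedOrbifold}, reads off $H_1(X_\rho) \cong \Z$ from Alexander duality, and eliminates the other two extensions by abelianisation. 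Both arguments are short; the paper's avoids invoking Lemma~\ref{lem:SimplyConnectedOrbifold}, while yours makes the identification of $X_\rho$ as a knot exterior explicit, which you then reuse in~(3).

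For~(3), the paper's argument is considerably more elementary than yours: since $\widetilde{X}_\rho \cong S^1 \times D^3$ one has $\pi_2(X_\rho)=0$, and combined with $H_2(\Z)=0$ this gives $H_2(X_\rho)=0$; hence the intersection form is trivial, $X_\rho$ is spin with $\sigma(X_\rho)=0$, and $\ks(X_\rho)=\sigma(X_\rho)/8=0$. You instead invoke Freedman's unknotting theorem for $2$-knots with $\pi_1 \cong \Z$, which is a much heavier tool but buys you the stronger conclusion $X_\rho \cong S^1 \times D^3$. This essentially preempts the content of the subsequent Theorem~\ref{thm:S2Fixed}, for which the paper appeals to a separate rel.~boundary classification result for $\pi_1 \cong \Z$ manifolds. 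So your route trades an elementary computation for a deep theorem, but in exchange nearly collapses the two-step argument (Lemma~\ref{lem:HomologyS2Fixed} then Theorem~\ref{thm:S2Fixed}) into one.
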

\begin{proof}
We prove the first assertion.
Pick an equivariant tubular neigbhorhood~$\nu(\Fix(\rho))$ of the fixed-point set.
Upon picking an equivariant framing we obtain an equivariant homeomorphism~$\overline{\nu}(\Fix(\rho)) \to S^2 \times D^2$.
Restricting to the boundary gives an equivariant homeomorphism~$\partial(S^4 - \nu(\Fix(\rho))) \to S^1 \times S^2$.
This descends to a homeomorphism~$f_\rho \colon \partial X_\rho \to S^1 \times S^2$.

We prove the second assertion.
Since the fixed-point set is an unknotted~$2$-sphere,  its exterior satisfies~$\widetilde{X}_\rho\cong D^3 \times S^1.$
The exact sequence of homotopy groups associated to the double covers~$D^3 \times S^1 \cong \widetilde{X}_\rho \to X_\rho$ and~$S^2 \times S^1 \cong \partial \widetilde{X}_\rho \to \partial X_\rho$ gives rise to the following commutative diagram:
$$
\xymatrix{
1 \ar[r]& {\overbrace{\pi_1(\widetilde{X}_\rho)}^{\cong \Z}} \ar[r]& \pi_1(X_\rho) \ar[r]& \Z/2 \ar[r]& 1 \\
1 \ar[r]& {\underbrace{\pi_1(\partial \widetilde{X}_\rho)}_{\cong \Z}} \ar[r]^{\cdot 2}\ar[u]^{i_*}_\cong& {\underbrace{\pi_1(\partial X_\rho)}_{\cong \Z}} \ar[r]\ar[u]^{i_*}& \Z/2 \ar[r]\ar[u]^= & 1.
}
$$
The left hand vertical map is seen to be an isomorphism using the meridian to the unknotted~$2$-sphere. 
It follows that the central map is also an isomorphism.
We deduce that~$\pi_1(X_\rho) \cong \Z$ is freely generated by the projection of a meridian to~$\Fix(\rho)$.

We prove the third assertion.
Since~$\widetilde{X}_\rho \cong S^1 \times D^3$, we have~$\pi_2(X_\rho) \cong \pi_2(\widetilde{X}_\rho)=0$.
Next, since~$H_2(\pi_1(X_\rho))=H_2(\Z)=0$,  the exact sequence~$\pi_2(X_\rho) \to H_2(X_\rho) \to H_2(\pi_1(X_\rho)) \to 0$ ensures that~$H_2(X_\rho)=0$.
We deduce that~$X_\rho$ has trivial intersection form.
It follows both that~$X_\rho$ is spin and has vanishing signature.
We conclude that~$X_\rho$ is spin and thus~$\ks(X_\rho)=\sigma(X_\rho)=0$.
\end{proof}

The proof of Lemma~\ref{lem:HomologyS2Fixed}
shows that $X_\rho$ is a homotopy $S^1 \times D^3$ with $\pi_1(X_\rho) \cong \Z$ and for which the inclusion induced map~$\pi_1(\partial X_\rho) \to \pi_1(X_\rho)$ is an isomorphism.
Arguments in surgery theory then readily show that the structure set $\mathcal{S}(X_\rho,\partial X_\rho)$ is trivial from which it follows that~$\rho$ is conjugate to a linear action.
In what follows, we provide a more down to earth (but nevertheless fairly short) proof of this result. 

We wish to apply~\cite[Theorem 1.10]{ConwayPowell} in order to prove that any two orbit sets~$X_{\rho_0}$ and~$X_{\rho_1}$ as above are homeomorphic.
For this, we need an intermediate construction.

\begin{construction}[A map~$\pi_1(\partial X_\rho) \to \Z$]
\label{cons:MapToZ}
Let~$\rho \colon S^4 \to S^4$ be an orientation-preserving involution with fixed-point set~$\Fix(\rho)$ an unknotted~$2$-sphere.
Lemma~\ref{lem:HomologyS2Fixed} ensures that the choice of the tubular neighborhood~$\nu(\Fix(\rho))$ of~$\Fix(\rho)$ and of an equivariant framing~$\overline{\nu}(\Fix(\rho)) \cong S^2 \times D^2$ determine a homeomorphism~$f_\rho \colon \partial X_\rho \xrightarrow{\cong} S^1 \times S^2$.
Consider the isomorphism
$$\varphi \colon  \pi_1(\partial X_\rho) \xrightarrow{f_\rho} \pi_1(S^1 \times S^2) \cong \Z.$$  
Lemma~\ref{lem:HomologyS2Fixed} shows that the choice of an oriented meridian for~$\Fix(\rho)$ determines an isomorphism~$\pi_1(X_\rho) \cong \Z$.
Observe that~$\varphi$ agrees with the composition~$\pi_1(\partial X_\rho) \xrightarrow{i_*,\cong} \pi_1(X_\rho) \cong \Z$,  as it sends the projection of the meridian to~$1$.
Note also that the corresponding Alexander module is trivial:~$H_1(\partial X_\rho;\Z[\Z])=0$.
\end{construction}

Recall that the \emph{standard linear involution} on $S^4$ with~$2$-dimensional fixed-point set is obtained by extending the linear involution~$\R^4 \to \R^4,(x_1,x_2,y_1,y_2)\mapsto (x_1,x_2,-y_1,-y_2)$ by fixing the point at infinity.
The next result proves Theorem~\ref{thm:S2FixedIntro} from the introduction.

\begin{theorem}
\label{thm:S2Fixed}
A locally  linear involution of $S^4$ with $2$-dimensional fixed-point set is conjugate to the standard linear action if and only if its fixed-point set is unknotted.
\end{theorem}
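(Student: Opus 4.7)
The ``only if'' direction is immediate from the definition: the standard linear involution has an unknotted $2$-sphere as its fixed-point set. I focus on the converse.

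Let $\rho_{std}$ and $\rho$ be two locally linear involutions of $S^4$ with unknotted $2$-sphere fixed-point sets $F_{std}$ and $F$. Combining Proposition~\ref{prop:23dimETN} and Proposition~\ref{prop:trivialnormalbundles}, pick equivariant framings identifying both $(\overline{\nu}(F_{std}), \rho_{std}|)$ and $(\overline{\nu}(F), \rho|)$ with $(S^2 \times D^2, \id_{S^2} \times -\id_{D^2})$. Passing to orbit spaces, these framings determine homeomorphisms $f_{\rho_{std}}, f_\rho \colon \partial X_{\bullet} \xrightarrow{\cong} S^1 \times S^2$ as in Construction~\ref{cons:MapToZ}. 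By Lemma~\ref{lem:HomologyS2Fixed}, the orbit spaces $X_{\rho_{std}}$ and $X_\rho$ are orientable $4$-manifolds with boundary $S^1 \times S^2$, $\pi_1 \cong \Z$ generated by a meridian (so that the inclusion $\pi_1(\partial) \to \pi_1$ is an isomorphism), trivial Alexander module $H_1(X_\bullet; \Z[\Z])$, and vanishing Kirby-Siebenmann invariant.

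These properties are exactly the input needed for~\cite[Theorem 1.10]{ConwayPowell}, which I would apply to produce a homeomorphism of pairs $G \colon (X_{\rho_{std}}, \partial X_{\rho_{std}}) \to (X_\rho, \partial X_\rho)$ whose restriction on the boundary is $f_\rho^{-1} \circ f_{\rho_{std}}$. Since the double covers $S^4 - \nu(F_{std}) \to X_{\rho_{std}}$ and $S^4 - \nu(F) \to X_\rho$ correspond to the unique index-$2$ subgroup of $\pi_1 \cong \Z$, the map $G$ lifts to a homeomorphism $\widetilde{G} \colon (S^4 - \nu(F_{std}), \rho_{std}|) \to (S^4 - \nu(F), \rho|)$ intertwining the deck transformations. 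Choose the lift whose restriction to $\partial \overline{\nu}(F_{std})$ is the identity under the framing identifications. Then $\widetilde{G}$ extends across the tubular neighborhoods by the identity on $(S^2 \times D^2, \id \times -\id)$, yielding an equivariant homeomorphism $(S^4, \rho_{std}) \to (S^4, \rho)$, so $\rho$ is conjugate to the standard linear action.

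The main obstacle is verifying the hypotheses of~\cite[Theorem 1.10]{ConwayPowell} in the required form, and ensuring that the two candidate homeomorphisms on $\partial \overline{\nu}$ coincide after lifting. Both are handled by the bookkeeping built into Construction~\ref{cons:MapToZ}: once an equivariant framing is fixed, it canonically identifies $\partial X_\rho$ with $S^1 \times S^2$, and the Conway-Powell homeomorphism respects this identification on the nose; among the two equivariant lifts to the double cover, the one matching the identity on the framed tubular neighborhoods is the one that glues.
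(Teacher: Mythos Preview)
Your proof is correct and follows essentially the same route as the paper: pass to the orbit spaces~$X_\bullet$, use Lemma~\ref{lem:HomologyS2Fixed} and Construction~\ref{cons:MapToZ} to verify the hypotheses of~\cite[Theorem~1.10]{ConwayPowell}, extend the framed boundary identification to a homeomorphism~$X_{\rho_{std}}\to X_\rho$, lift to the double cover, and glue back the equivariant tubular neighborhoods. One small point you leave implicit (and the paper spells out) is that the~$\Z[\Z]$-intersection forms of the~$X_\bullet$ vanish, which is part of the input to~\cite[Theorem~1.10]{ConwayPowell}; this is immediate since the infinite cyclic covers are~$\R\times D^3$.
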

\begin{proof}
The standard linear action fixes an unknotted $2$-sphere, so we focus on the converse.
We will show that if~$\rho_0,\rho_1 \colon S^4 \to S^4$ are orientation preserving involutions with fixed-point sets~$\Fix(\rho_i)$ each an unknotted~$2$-sphere, then~$\rho_0$ and~$\rho_1$ are conjugate.
\color{black}
For~$i=0,1$, let~$\nu(\Fix(\rho_i))$ be equivariant tubular neighorhoods of the fixed-point set and set~$X_i:=X_{\rho_i}$ for brevity.
Upon picking an equivariant framing we obtain an equivariant linear bundle isomorphism~$\overline{\nu}(\Fix(\rho_i)) \to S^2 \times D^2$.
Restricting to the boundary gives an equivariant homeomorphism~$\partial(S^4 - \nu(\Fix(\rho_i))) \to S^2 \times S^1$.
This action descends to a homeomorphism~$f_i \colon \partial X_i \to S^1 \times S^2$.

Consider~$f:= f_1 \circ f_0^{-1} \colon \partial X_0 \to \partial X_1$ and the coefficient systems~$\varphi_i \colon \pi_1(\partial X_i) \to \Z$ from Construction~\ref{cons:MapToZ} for $i=0,1$.
We assert that~$f_* \circ \varphi_0=\varphi_1$.
It suffices to argue that~$f$ maps one projected meridian to the other.
To see this,  note that the framings take meridians to meridians (because they are linear bundle isomorphisms).
Since the framings are equivariant, the same is true on the orbit sets.

Since the $\Z$-covers
of the~$X_i$ are homeomorphic to~$\R \times D^3$,
their~$\Z[\Z]$-intersection form and the Alexander module of their boundary are trivial.
The third item of Lemma~\ref{lem:HomologyS2Fixed} ensures that~$\ks(X_0)=0=\ks(X_1)$.
It now follows from~\cite[Theorem 1.10]{ConwayPowell} that~$f$ extends to a homeomorphism~$X_0 \to X_1$.

Lift this homeomorphism to the universal cover to obtain an equivariant homeomorphism 
$$(S^4 - \nu(\Fix(\rho_0)) \to (S^4 - \nu(\Fix(\rho_1)).$$
By construction, this homeomorphism extends over the equivariant tubular neigbhorhoods.
Combining these equivariant homeomorphisms gives the required homeomorphism~$S^4 \to S^4$.
\end{proof}

\section{1-dimensional fixed-point sets}
\label{sec:1Dim}

This section focuses on involutions of~$S^4$ whose fixed-point set is~$1$-dimensional and proves our main results, namely Theorems~\ref{thm:1DIntro},~\ref{thm:EquivariantSchoenfliesIntro} and~\ref{thm:intrononuniquenessIntro} from the introduction.
Throughout the section, several constructions will require an equivariant tubular neighborhood.
As noted in Section~\ref{sec:Definitions},  it is not known whether every locally linear involution has an equivariant tubular neighborhood.
If the involution is smooth, then the existence of an equivariant tubular is well-known; see for example~\cite[Chapter VI, Theorem 2.2]{BredonIntroduction}. 
Since we have been unable to show the corresponding result in the topological category,  we make the following definition.

\begin{definition}
\label{def:FixedPointLinear}
A locally linear involution is \emph{fixed-point linear} if its fixed-point set admits an equivariant tubular neigbhorhood.
\end{definition}

Recall from Proposition~\ref{prop:23dimETN} shows that when $n\neq 1$, every locally linear involution on a closed~$4$-manifold with $n$-dimensional fixed-point set is fixed-point linear.
When the fixed-point set is~$1$-dimensional, an example of a fixed-point linear involution on $S^4$ arises from the smooth involution~$\R^4 \to \R^4,(x_1,y_1,y_2,y_3) \mapsto (x_1,-y_1,-y_2,-y_3)$ with fixed-point set homeomorphic to~$\mathbb{R}$.
Indeed this involution extends to an involution~$\rho_0 \colon S^4 \to S^4$ by fixing the point at infinity.
The goal of this section is to prove the following results which are Theorem~\ref{thm:1DIntro} and (most of) Theorem~\ref{thm:intrononuniquenessIntro} from the introduction.

\begin{theorem}\label{thm:1dimlclassification}
There is a unique fixed-point linear involution~$S^4 \to S^4$ with fixed-point set homeomorphic to a circle, up to conjugation in the homeomorphism group of~$S^4$; this involution is~$\rho_0$.
\end{theorem}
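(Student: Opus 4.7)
The strategy is to split into two cases according to the Kirby--Siebenmann invariant of the orbit space $X_{\rho,\nu}$ and handle them with complementary techniques: the $\ks=0$ case via the surgery-theoretic Theorem~\ref{thm:quotientclassification}, and the $\ks=1$ case via an equivariant doubling trick combined with the equivariant Schoenflies Theorem~\ref{thm:EquivariantSchoenfliesIntro}.

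For the common setup: given a fixed-point linear involution $\rho$ on $S^4$ with fixed circle $F$ and equivariant tubular neighborhood $\nu$, Proposition~\ref{prop:trivialnormalbundles} identifies $(\overline{\nu}, \rho|)$ with $(S^1 \times D^3, \id \times (-I))$, so that $\partial X_{\rho,\nu} \cong S^1 \times \R P^2$ canonically. A join-decomposition $S^4 = S^1 * S^2$ computes $X_{\rho_0,\nu_0} \cong D^2 \times \R P^2$. Using that locally flat $1$-knots in $S^4$ are unknotted, the universal cover of $X_{\rho,\nu}$ is always $S^4 - \nu \cong D^2 \times S^2$ equipped with a free involution which is (up to homotopy) the antipode on the $S^2$ factor, so $X_{\rho,\nu}$ is homotopy equivalent to $\R P^2$ and shares the fundamental group $\Z/2$, trivial intersection form, and boundary with $X_{\rho_0,\nu_0}$. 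Thus $\ks(X_{\rho,\nu}) \in \{0,1\}$ is the only residual invariant that could distinguish them.

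The case $\ks(X_{\rho,\nu}) = 0$ is to be resolved directly by Theorem~\ref{thm:quotientclassification}, which provides a homeomorphism $X_{\rho,\nu} \cong X_{\rho_0,\nu_0}$ compatible with the boundary identification. Lifting to the $\Z/2$-covers yields an equivariant homeomorphism of exteriors $(S^4 - \nu, \rho) \cong (S^4 - \nu_0, \rho_0)$, and extending over the standardly identified equivariant tubular neighborhoods gives the desired equivariant homeomorphism $(S^4, \rho) \cong (S^4, \rho_0)$.

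The case $\ks(X_{\rho,\nu}) = 1$ uses an equivariant doubling. The plan is to excise a small equivariant $4$-ball $B$ around a fixed point $p \in F$, so that $(B, \rho|_B) \cong (B^4, \rho_0|)$ by local linearity and the complement $(D^4, \rho') := (S^4 - B, \rho|)$ is a $4$-ball whose involution fixes an arc. Gluing two copies $(D^4_1, \rho'_1)$ and $(D^4_2, \rho'_2)$ equivariantly along their $S^3$ boundaries, matching boundary fixed points, produces $(N, \tau)$ where $N \cong S^4$ and $\tau$ is fixed-point linear with fixed circle obtained by joining the two arcs. The orbit space $X_{\tau, \nu_\tau}$ is a double of $X_{\rho, \nu}$ along an $\R P^2 \times I$ portion of its boundary, so additivity of the Kirby--Siebenmann invariant under this gluing yields $\ks(X_{\tau, \nu_\tau}) = 2\ks(X_{\rho,\nu}) = 0 \in \Z/2$. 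The $\ks=0$ case then gives an equivariant homeomorphism $(N, \tau) \cong (S^4, \rho_0)$, under which the gluing sphere $S^3 \subset N$ becomes an equivariant locally flat $3$-sphere in $(S^4, \rho_0)$ whose restricted involution has $0$-dimensional fixed set; Theorem~\ref{thm:EquivariantSchoenfliesIntro} then ensures that each closed complementary region, and hence each $(D^4_i, \rho'_i)$, is equivariantly $(B^4, \rho_0|)$. Reassembling $(S^4, \rho) = D^4 \cup B$ from equivariantly standard pieces, and verifying that the resulting gluing self-homeomorphism of $S^3$ extends equivariantly over one of the $4$-balls (which reduces via the branched cover to Alexander's trick in the quotient $S^3$), yields $(S^4, \rho) \cong (S^4, \rho_0)$. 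The principal technical obstacle is the surgery-theoretic black box Theorem~\ref{thm:quotientclassification}, which is the technical heart of both cases; the conceptually novel ingredient is the combination of doubling with equivariant Schoenflies, which reduces the $\ks=1$ case to the $\ks=0$ case despite the orbit space itself remaining nonstandard.
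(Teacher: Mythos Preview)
Your proposal is correct and follows essentially the same strategy as the paper: split on $\ks(X_{\rho,\nu})$, invoke Theorem~\ref{thm:quotientclassification} directly in the $\ks=0$ case, and in the $\ks=1$ case excise an equivariant $4$-ball meeting the fixed circle in an arc, double, observe that additivity gives $\ks=0$ on the doubled orbit space so the double is standard, and then apply the equivariant Schoenflies theorem to recover the pieces. One small correction: in your final reassembly step the quotient $S^3/\rho_3$ (for the involution with $S^0$ fixed set) is the suspension $\Sigma\R P^2$, not $S^3$, so the extension of the equivariant gluing homeomorphism over $B^4$ is not via Alexander's trick in a quotient manifold but simply by equivariant coning, exactly as the paper does.
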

\begin{theorem} \label{thm:nonuniqueness}
The fixed-point set of the involution~$\rho$ in Theorem \ref{thm:1dimlclassification} has exactly two equivariant tubular neighborhoods~$\nu_0,\nu_1$ up to equivariant homeomorphism of 
pairs~$(S^4, \nu)$. 
The neigbhorhoods satisfy~$\ks((S^4 - \nu_0)/\rho) = 0$ and~$\ks((S^4 - \nu_1)/\rho) = 1$.
\end{theorem}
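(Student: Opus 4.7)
The plan is to treat the existence and uniqueness statements separately: first exhibit two inequivalent equivariant tubular neighborhoods of $C:=\Fix(\rho)$, distinguished by the Kirby-Siebenmann invariant of their orbit set, and then invoke modified surgery to show that no third class arises. Throughout I write $\rho=\rho_0$ for the standard linear involution from Theorem \ref{thm:1dimlclassification}.

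For existence, the first neighborhood $\nu_0$ is the obvious one coming from the smooth linear structure of $\rho_0$; since the action is free on $S^4-\nu_0$, the orbit set $X_{\rho,\nu_0}$ inherits a smooth structure and hence $\ks(X_{\rho,\nu_0})=0$. For the second, I would adapt the Hambleton-Kreck-Teichner construction of the fake $\R P^4$ from \cite{HambletonKreckTeichnerNonorientable} to manufacture a fixed-point linear involution $\rho_1$ on $S^4$ whose fixed circle carries an equivariant tubular neighborhood $\nu_1'$ with $\ks(X_{\rho_1,\nu_1'})=1$. Concretely, I would build a closed nonorientable $4$-manifold $Z$ with $\pi_1(Z)\cong\Z/2$, boundary $S^1\times S^2$, normal $1$-type matching that of $X_{\rho,\nu_0}$, and $\ks(Z)=1$; attaching $S^1\times D^3$ along the boundary should close $Z$ off so that its double cover is $S^4$ equipped with the desired involution $\rho_1$. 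Theorem \ref{thm:1dimlclassification} now supplies a conjugacy $h\colon(S^4,\rho_1)\to(S^4,\rho)$ carrying $\Fix(\rho_1)$ to $C$, so $\nu_1:=h(\nu_1')$ is a second equivariant tubular neighborhood of $C$ with $\ks(X_{\rho,\nu_1})=1$. An equivariant homeomorphism of pairs $(S^4,\nu_0)\to(S^4,\nu_1)$ would descend to a $\ks$-preserving homeomorphism of orbit sets, which is impossible.

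For the upper bound, given two equivariant tubular neighborhoods $\nu,\nu'$ of $C$ with $\ks(X_{\rho,\nu})=\ks(X_{\rho,\nu'})$, the modified surgery classification (Theorem \ref{thm:quotientclassification}) should supply a rel-boundary homeomorphism $X_{\rho,\nu}\to X_{\rho,\nu'}$: Proposition \ref{prop:trivialnormalbundles} furnishes equivariant framings identifying both boundaries with $S^1\times S^2$, the exteriors share the same normal $1$-type (determined by $\pi_1\cong\Z/2$, $w_1$, and $w_2$), and the Kirby-Siebenmann invariants agree by assumption. Lifting this homeomorphism to the double covers and extending identically across the equivariant disk-bundle neighborhoods yields an equivariant homeomorphism of pairs $(S^4,\nu)\to(S^4,\nu')$. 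Combined with the existence step, this shows the pairs fall into exactly two equivariant homeomorphism classes, indexed by $\ks\in\{0,1\}$.

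The main obstacle will be the surgery-theoretic construction of $\rho_1$ with $\ks(X_{\rho_1,\nu_1'})=1$. One must simultaneously engineer a $1$-dimensional fixed locus whose equivariant disk-bundle has boundary $S^1\times S^2$, produce a nonorientable quotient realizing the correct normal $1$-type, arrange a nontrivial Kirby-Siebenmann invariant on that quotient, and ensure that the resulting double cover compactifies to a genuine $4$-sphere. The remaining steps reduce to invoking Theorem \ref{thm:quotientclassification} and routine bookkeeping with equivariant framings and the Kirby-Siebenmann invariant.
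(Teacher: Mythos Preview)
Your overall strategy matches the paper's: use Theorem~\ref{thm:quotientclassification} to produce $(\rho_1,\nu_1')$ with $\ks=1$, conjugate via Theorem~\ref{thm:1dimlclassification} to transport $\nu_1'$ onto the standard involution, and then invoke Theorem~\ref{thm:quotientclassification} again for the upper bound. However, there is a genuine gap in your upper-bound argument.

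You claim that Theorem~\ref{thm:quotientclassification} supplies a \emph{rel-boundary} homeomorphism $X_{\rho,\nu}\to X_{\rho,\nu'}$, which you then lift and ``extend identically'' over the disk bundles. But the theorem only promises an equivariant homeomorphism $S^4-\nu\to S^4-\nu'$ that extends either $\fr_1^{-1}|_\partial\circ\fr_0|_\partial$ \emph{or} $\fr_1^{-1}|_\partial\circ G\circ\fr_0|_\partial$, where $G$ is the Gluck twist (see Remark~\ref{rem:quotientclassification}). In the second case your ``extend identically'' step fails outright: the boundary map is not the framing identification, so the two disk bundles do not glue via the identity. The paper handles this by observing that the Gluck twist on $S^1\times S^2$ extends equivariantly over $(S^1\times D^3,\id\times(-\id))$; once you know this, either boundary map extends over $\nu$ and $\nu'$ to give the equivariant pair homeomorphism. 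This is not a deep point, but it is an essential one that your proposal omits.

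A smaller issue: in your existence sketch you describe building a nonorientable $Z$ with $\pi_1\cong\Z/2$ and ``boundary $S^1\times S^2$,'' then capping with $S^1\times D^3$. The orbit set $X_{\rho,\nu}$ actually has boundary $S^1\times\R P^2$ (Lemma~\ref{lem:WellDef}); it is its double cover that has boundary $S^1\times S^2$. This confusion is harmless in practice since Theorem~\ref{thm:quotientclassification} already asserts that both $\ks$ values are realized (via Constructions~\ref{cons:1dimKS=0} and~\ref{cons:1dimKS=1}), and the paper simply cites that statement rather than redoing the construction inside the proof of Theorem~\ref{thm:nonuniqueness}.
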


Before proceeding, we observe in the following corollary that the non-uniqueness in Theorem~\ref{thm:nonuniqueness} can be extended to other 4-manifolds, thus proving the remainder of Theorem~\ref{thm:intrononuniquenessIntro} from the introduction.
For the following corollary,  let~$\rho_{std} \times \id_{S^1}$ be the free involution on~$S^2 \times S^1$ given by~$\rho_{std}(x,y) = (-x,y)$ and slightly abusing notation let~$\rho_{std}$ also refer to the involution on~$D^3 \times S^1$ given by~$\rho_{std}(x,y) = (-x,y)$.

\begin{corollary} \label{cor:other4manifolds}
If~$\rho$ is a locally linear involution on a 4-manifold~$M$ with a 1-dimensional component~$F$ of~$\Fix(\rho)$, and~$F$ has an orientable equivariant tubular neighborhood~$\nu(F)$, then~$F$ has at least two equivariant tubular neighborhoods up to equivariant homeomorphism of pairs. 
\end{corollary}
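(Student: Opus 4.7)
The plan is to transport the non-uniqueness from Theorem~\ref{thm:nonuniqueness} into $(M,\rho)$ via a local cut-and-paste, and then detect the resulting difference by means of a Kirby--Siebenmann invariant on the quotient complement.

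First I would pick a point $p \in F$ and, using local linearity, choose an equivariant closed $4$-ball $B \subset \nu(F)$ containing $p$ with $(B, \rho|_B)$ equivariantly homeomorphic to $(B^4, \rho_{std})$, and arrange that $\nu(F) \cap B$ corresponds to the standard linear equivariant $D^3$-bundle over the fixed arc $\alpha := F \cap B$. The orientability of $\nu(F)$, together with $F \cong S^1$ and an argument in the spirit of Proposition~\ref{prop:trivialnormalbundles}, is what allows this local normal form to be set up.

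Next I would double $(B, \rho|_B)$ along $\partial B$ to recover the pair $(S^4, \rho_0)$ of Theorem~\ref{thm:1dimlclassification}, in which $\alpha$ doubles to a fixed circle and $\nu(F) \cap B$ doubles to the standard tubular neighborhood $\nu_0$ of Theorem~\ref{thm:nonuniqueness}. That theorem supplies an exotic equivariant tubular neighborhood $\nu_1$ with $\ks((S^4 - \nu_1)/\rho_0) = 1$. After an equivariant isotopy, I would arrange that $\nu_1$ agrees with $\nu_0$ outside one of the two hemispheres of the doubling (i.e.\ outside a copy of $B$), so that $\tilde\nu := \nu_1 \cap B$ is an equivariant tubular neighborhood of $\alpha$ inside $B$ that matches $\nu(F) \cap B$ near $\partial B$. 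Substituting $\tilde\nu$ for $\nu(F) \cap B$ inside $M$ then yields a new equivariant tubular neighborhood $\nu'(F) \subset M$ of $F$ that coincides with $\nu(F)$ outside~$B$.

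To show that $(M, \nu(F))$ and $(M, \nu'(F))$ are inequivalent as equivariant pairs, I would use a Kirby--Siebenmann computation on the manifold part of the quotient (e.g.\ on $(M - \Fix(\rho) - \nu(F))/\rho$, or after further removing equivariant tubular neighborhoods of the other components of $\Fix(\rho)$ where available). Since the modification is supported inside $B$ and disjoint from the other components of $\Fix(\rho)$, the local contributions of $(B - \nu(F))/\rho$ and $(B - \nu'(F))/\rho$ to $\ks$ differ by exactly $1$, reproducing the $\ks$-jump of Theorem~\ref{thm:nonuniqueness} after regluing $\partial B$ with its standard involution. Any equivariant homeomorphism of pairs would descend to a homeomorphism of these quotient complements preserving $\ks$, and so no such homeomorphism can exist.

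The main obstacle is the equivariant isotopy concentrating the difference between $\nu_0$ and $\nu_1$ into a single hemisphere. Both are equivariant linear $D^3$-bundles over the same fixed circle in $(S^4, \rho_0)$, so one expects a locally flat, equivariant analogue of the elementary bundle-theoretic statement that two framings of a trivial $D^3$-bundle over $S^1$ can be made to agree outside a prescribed arc; executing this in the locally linear, locally flat $4$-dimensional setting while preserving equivariance is the delicate step.
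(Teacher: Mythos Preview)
Your detection mechanism (Kirby--Siebenmann on the quotient complement) and the idea of first removing the other fixed components both match the paper, but the transport step is where you diverge and where the gap lies. The paper does not localize to a ball or double anything: instead it shrinks $\nu_1$ inside $\nu_0$ in $(S^4,\rho_0)$, observes that the shell $W := \overline{\nu}_0 - \nu_1$ has $\ks(W/\rho_0)=1$ by additivity and Theorem~\ref{thm:nonuniqueness}, and then uses orientability of $\nu(F)$ (via the argument of Proposition~\ref{prop:trivialnormalbundles}) to produce a \emph{global} equivariant bundle isomorphism $\psi\colon \nu(F)\to \nu_0$. The second neighborhood is simply $\nu'(F):=\psi^{-1}(\nu_1)\subset \nu(F)\subset M$, and the shell transports to $\psi^{-1}(W)\subset M$, giving $\ks\big((M-\nu(F))/\rho\big)\ne \ks\big((M-\nu'(F))/\rho\big)$.

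The step you yourself flag as the main obstacle---arranging by equivariant isotopy that $\nu_1$ agrees with $\nu_0$ outside one hemisphere---is exactly the gap in your argument, and nothing in the paper supplies it. It amounts to a relative equivariant uniqueness statement for tubular neighborhoods over an arc in the locally flat topological category, whereas the only uniqueness-type input at hand is Theorem~\ref{thm:nonuniqueness}, which says precisely that $\nu_0$ and $\nu_1$ are \emph{not} globally equivalent. The global-framing route sidesteps the issue entirely: since $\psi$ is already defined on all of $\nu(F)$ and $\nu_1$ already sits inside $\nu_0$, no relative isotopy is needed, and the passage through a small ball and a doubling becomes unnecessary.
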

\begin{proof}
We will assume that~$F$ is the only component of~$\Fix(\rho)$. Indeed, if there is any other component of~$\Fix(\rho)$,  remove a neighborhood of it and consider the resulting manifold with boundary. 

We begin by considering a pair of equivariant tubular neighborhoods~$\overline{\nu}_0$ and~$\overline{\nu}_1$ of the fixed~$S^1$ in~$S^4$ with respect to the standard involution~$\rho_{std}$ as guaranteed by Theorem \ref{thm:nonuniqueness}. 
Applying an equivariant ambient isotopy shrinking~$\overline{\nu}_1$ fiber-wise until~$\overline{\nu}_1 \subset \nu_0$, we can consider the remaining shell~$(W,\rho) := (\overline{\nu}_0 - \nu_1, \rho_{std} \mid_{\overline{\nu}_0 - \nu_1})$. 
Since~$\ks((S^4 - \nu_0)/\rho) = 0$ and~$\ks((S^4 - \nu_1)/\rho) = 1$,  the additivity of the Kirby-Siebenmann invariant (see e.g.~\cite[Theorem 9.2]{FriedlNagelOrsonPowell}) implies that~$\ks(W/\rho) = 1$.

Now let~$\nu_0(F)$ be an equivariant tubular neighborhood of~$F$ in~$M$. 
Since~$\nu_0(F)$ is orientable the proof of Proposition~\ref{prop:trivialnormalbundles} implies that $F$ has an equivariant framing which gives an equivariant bundle isomorphism~$\psi \colon \nu_0(F) \to \nu_0$, where~$\nu_0$ is the neighborhood of~$S^1$ in~$S^4$ used above. 
We can then consider~$\nu_1(F):=\psi^{-1}(\nu_1)$ which is another (orientable) equivariant tubular neighborhood of~$F$ in~$M$. 
Note that $\nu_1(F)$ equivariantly embeds in $M$, since $\nu_1(F) \subset \nu_0(F)$.
We now note that~$\ks((M - \nu_0(F))/\rho) \neq \ks((M - \nu_1(F))/\rho)$, since their difference is~$\psi^{-1}(W)/\rho$ which has Kirby-Siebenmann invariant~$1$. 
We conclude that~$\nu_1(F)$ and~$\nu_0(F)$ do not have equivariantly homeomorphic exteriors. 
\end{proof}
\subsection{A technical intermediate result}

Theorems \ref{thm:1dimlclassification} and \ref{thm:nonuniqueness} (which are proved in Section~\ref{thm:nonuniqueness} below) will follow from a more technical theorem which we set up with the following notation.
Given 
a fixed-point linear involution~$\rho \colon S^4 \to S^4$ with $1$-dimensional fixed-point set and an equivariant tubular neighorhood~$\nu$ of $\Fix(\rho)$,  set
$$X_{\rho,\nu}:=(S^4 - \nu)/\rho.$$
The main technical result needed to prove Theorems \ref{thm:1dimlclassification} and \ref{thm:nonuniqueness}  is the following.
\begin{theorem} \label{thm:quotientclassification}
Let~$\rho,\rho' \colon S^4 \to S^4$ be a fixed-point linear involutions with 1-dimensional fixed-point sets, and let~$\nu,\nu'$ be equivariant tubular neighborhoods of~$\Fix(\rho)$ and~$\Fix(\rho')$ respectively. 
The following are equivalent:
\begin{enumerate}
  \item the Kirby-Siebenmann invariants of the orbit sets satisfy~$\ks(X_{\rho,\nu}) = \ks(X_{\rho',\nu'})$, 
  \item there is a homeomorphism~$X_{\rho,\nu} \cong X_{\rho',\nu'}$.
\end{enumerate}
Additionally:
\begin{itemize}
\item Both values of the Kirby-Siebenmann invariant are realized: there are~$(\rho_0,\nu_0)$ and~$(\rho_1,\nu_1)$ with~$(S^4 - \nu_0,\rho_0) \cong (S^2\times D^2,\rho_{std})$ so that~$\ks(X_{\rho_0,\nu_0})=0$, and~$
S^4 - \nu_1 \simeq S^2 \times D^2$ with~$\ks(X_{\rho_1,\nu_1})=1$.
\item If $\ks(X_{\rho,\nu}) = \ks(X_{\rho',\nu'})$,  then  there is an equivariant homeomorphism $S^4 - \nu \to S^4 - \nu'$ that is either rel. boundary or extends the Gluck twist (see Remark \ref{rem:quotientclassification} below); in particular~$\rho$ and~$\rho'$ are conjugate. 
\end{itemize} 
\end{theorem}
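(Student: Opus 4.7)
The plan is to apply Kreck's modified surgery programme to classify the orbit sets $X_{\rho,\nu}$ up to boundary-preserving homeomorphism, then to equivariantly lift this homeomorphism across the tubular neighborhoods. The direction $(2) \Rightarrow (1)$ is immediate from homeomorphism-invariance of $\ks$, so the substance is in $(1) \Rightarrow (2)$.

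First I would pin down the homotopy-theoretic data common to all such $X_{\rho,\nu}$. Using a Smith-theoretic argument applied to the free $\Z/2$-action on $S^4 - \nu$, I expect to show that the fixed circle $\Fix(\rho)$ is unknotted in $S^4$, so that $S^4 - \nu \simeq S^2 \times D^2$. Consequently every $X_{\rho,\nu}$ is a compact non-orientable $4$-manifold with $\pi_1 \cong \Z/2$, nontrivial $w_1$, $\pi_2 \cong \Z$ with the nontrivial $\Z/2$-action, and (as verified in the model $\R P^2 \times D^2$) $w_2 = w_1^2$. Thus all $X_{\rho,\nu}$ share a common $\TOPPin^-$ normal $1$-type $\xi \colon B \to \BTOP$; moreover, by Proposition~\ref{prop:trivialnormalbundles}, an equivariant framing of $\nu$ canonically identifies the boundaries of any two orbit sets.

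Next I would execute the modified surgery machine. I would compute the relevant relative bordism set $\Omega_4^\xi$ (with the canonical boundary identification fixed) and verify it is detected by $\ks$. When $\ks(X_{\rho,\nu}) = \ks(X_{\rho',\nu'})$, the two orbit sets are then normally bordant rel.~boundary, so Kreck's theorem produces a normal $5$-cobordism $W$ whose obstruction to being a relative topological $s$-cobordism lies in the $\ell_5$-monoid $\ell_5(\Z[\Z/2], w_1)$. Invoking \cite[Proposition~4]{HambletonKreckTeichnerNonorientable} this obstruction vanishes, and Freedman's topological $s$-cobordism theorem (applicable since $\Z/2$ is good) yields a boundary-preserving homeomorphism $X_{\rho,\nu} \cong X_{\rho',\nu'}$. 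For realization, the standard linear involution gives $X_{\rho_0,\nu_0} \cong \R P^2 \times D^2$ with $\ks = 0$; to realize $\ks = 1$, I would adapt the $\star$-construction from \cite{HambletonKreckTeichnerNonorientable} (used there to produce the fake $\R P^4$), performing it on $\R P^2 \times D^2$ relative to its boundary and invoking Freedman's recognition theorem to verify that the resulting double cover is $S^4$.

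Finally I would carry out the equivariant lifting. The homeomorphism from the previous paragraph lifts along the double cover to an equivariant homeomorphism $S^4 - \nu \to S^4 - \nu'$. To extend it across the tubular neighborhoods---each equivariantly modeled on $(S^1 \times D^3, \id \times (-\id))$---one analyzes the equivariant mapping class group of their common boundary $(S^1 \times S^2, \id \times (-\id))$; this group is generated by the identity and the Gluck twist (which is automatically equivariant), which is exactly the stated dichotomy. The main obstacle I anticipate is the bordism and $\ell_5$ step: one must identify our normal $1$-type and form parameters precisely with those of \cite{HambletonKreckTeichnerNonorientable} so that Proposition~4 applies, and ensure that the $\star$-construction yields a genuine quotient of $S^4$ rather than of some homotopy $4$-sphere---a point which ultimately rests on Freedman's classification of simply-connected topological $4$-manifolds.
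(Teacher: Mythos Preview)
Your proposal is correct and follows the same route as the paper: modified surgery with normal $1$-type $\BTOPPin^+$, $(B,\xi)$-bordism detected by $\ks$, \cite[Proposition~4]{HambletonKreckTeichnerNonorientable} to make the $\ell_5$ obstruction elementary (after checking the Wall form has trivial symmetrization), and Freedman for the $s$-cobordism and for recognizing the double cover of the $\ks=1$ example as $S^4$. The only cosmetic differences are that the paper glues the two orbit sets into a closed manifold---composing the boundary identification with a Gluck twist upfront, when necessary, to force the double cover to be $S^2\times S^2$ and hence the union to be $\Pin^-$---rather than working directly in relative bordism, and it realizes $\ks=1$ via an explicit $E_8$ degree-one normal map on $\R P^2\times D^2$ rather than invoking the $\star$-construction by name.
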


The involutions $\rho_0$ and $\rho_1$ are described in Constructions~\ref{cons:1dimKS=0} and~\ref{cons:1dimKS=1} below.

\begin{remark}
\label{rem:quotientclassification}
We comment on the last statement of Theorem~\ref{thm:quotientclassification}
\begin{itemize}
\item 
Given an equivariant tubular neighborhood~$\nu$ of the fixed-point set of an involution~$\rho$ as in Therorem~\ref{thm:quotientclassification}, an equivariant framing induces an equivariant linear bundle isomorphism~$(\overline{\nu},\rho|) \xrightarrow{\cong} (S^1\times D^3, \rho_{std})$ and therefore an equivariant homeomorphism $\partial (X- \nu) \cong (S^2 \times S^1)$ and a homeomorphism~$\partial X_{\rho,\nu} \cong (\R P^2 \times S^1)$.
The last statement in Theorem~\ref{thm:quotientclassification} asserts that given equivariant framings~$\fr \colon \overline{\nu} \to S^1 \times D^3$ and~$\fr' \colon \overline{\nu}' \to S^1 \times D^3$,  if~$\ks(X_{\rho,\nu}) = \ks(X_{\rho',\nu'})$ then there is an equivariant homeomorphism $S^4 - \nu \to S^4 - \nu'$ extending either $\fr^{-1}\vert_{\partial}\circ \fr'\vert_{\partial}$ or $\fr^{-1}\vert_{\partial}\circ G \circ \fr'\vert_{\partial}$, where $G$ is the Gluck twist. 
\item 
To see that $\rho$ and $\rho'$ are conjugate, use that equivariant framings and the Gluck twist extend equivariantly over the respective copies of $S^1 \times D^3$.
\end{itemize}
\end{remark}

The proof of Theorem~\ref{thm:quotientclassification}  uses modified surgery theory and will be given in Section~\ref{sec:ProofTechnical} after we introduce the necessary surgery theoretic background in Section~\ref{sec:Modified}.
In the course of the proof we will see that~$X_{\rho_0,\nu_0} \cong \R P^2 \times D^2$ and~$X_{\rho_1,\nu_1}$ is a fake~$\R P^2 \times D^2$. We will also observe that the actions~$\rho_0$ and~$\rho_1$ are in fact conjugate in the homeomorphism group of~$S^4$. In particular, it is the choice of equivariant tubular neighborhood, not the involution itself, which distinguishes~$X_{\rho_0,\nu_0}$ and~$X_{\rho_1,\nu_1}$.
Finally, we note that~$\rho_0$ is the standard involution of~$S^4$ constructed above Theorem~\ref{thm:1dimlclassification}.

\subsection{Proof of Theorems \ref{thm:1dimlclassification} and \ref{thm:nonuniqueness}}
\label{sub:MainProofs}

To begin we will prove an equivariant version of the generalized Schoenflies theorem using a Bing-shrinking argument.
In the non equivariant setting, the result is originally due to Brown~\cite{BrownSchoenflies} and Mazur~\cite{MazurSchoenflies}, but we will mostly  follow Putman's exposition~\cite{Putman} of Brown's proof.

The first step toward this is an equivariant collaring theorem. 
The argument follows closely~\cite[Theorem 2.1]{Putman} in the non-equivariant case.
Putman's exposition follows Connelly's proof of the collaring theorem~\cite{Connelly} which is originally due to Brown~\cite{BrownCollars}.

\begin{theorem}[Equivariant collaring theorem]
	 \label{thm:equivariantcollaring}
If~$M$ is a~$4$-manifold with boundary~$\partial M = Y$ and~$\rho\colon M \to M$ is an order 2 homeomorphism,
then~$Y$ has a~$\rho$-equivariant collar neighborhood,  that is an embedding~$i\colon Y \times [0,1] \hookrightarrow M$ with~$i(\rho(y),t) = \rho(i(y,t))$ and~$i(Y\times\{0\}) = \partial M$.
\end{theorem}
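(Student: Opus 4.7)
The proof will follow closely the Connelly/Brown proof of the non-equivariant collaring theorem (as presented by Putman), carrying $\rho$-equivariance through each step. The overall structure is twofold: first, establish local equivariant collars on an invariant open cover of $Y$; second, patch these together into a global equivariant collar via an inductive construction that is equivariant at each stage.

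For the local step, fix $y \in Y$. When $\rho(y) \neq y$, choose a small open set $V \subset Y$ containing $y$ with $V \cap \rho(V) = \emptyset$, and take any (non-equivariant) local collar $i_V$ on $V$ produced by Brown's theorem. Then $i_V$ together with its $\rho$-pushforward $i_{\rho(V)}(z,t) := \rho(i_V(\rho(z), t))$ assembles into an equivariant local collar on the invariant set $V \sqcup \rho(V)$. When $\rho(y) = y$, invoke local linearity of $\rho$ near $y$: an invariant neighborhood of $y$ in $M$ is equivariantly modeled on a linear involution of a half-space $H \subset \R^4$, and since $H$ is preserved setwise the involution must fix its outward normal direction, giving an equivariant local collar directly from the linear model.

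For the global step, extract from the local step a countable cover $\{U_n\}$ of $Y$ by invariant open sets, each carrying an equivariant local collar. Build equivariant collars $c_n$ over $U_1 \cup \cdots \cup U_n$ by induction, modifying $c_{n-1}$ and the local collar over $U_n$ on their overlap so that the two agree. Connelly's interpolation between two collars is given by a self-homeomorphism of $Y \times [0,1]$ constructed from an auxiliary Urysohn-type function $f \colon Y \to [0,1]$. The main obstacle will be arranging each such $f$ to be $\rho$-invariant: replacing $f$ by $\max(f, f \circ \rho)$ (or the analogous $\min$) preserves the support and level-set constraints that Connelly's argument requires while enforcing $\rho$-invariance. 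With this modification the interpolating homeomorphism commutes with $\rho$, equivariance is maintained at each inductive step, and the usual limiting argument assembles the compatible sequence $c_1, c_2, \ldots$ into the desired equivariant collar $i \colon Y \times [0,1] \hookrightarrow M$.
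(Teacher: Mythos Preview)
Your local step has a genuine gap at fixed points. The theorem as stated assumes only that~$\rho$ is an order~$2$ homeomorphism; there is no local linearity hypothesis. So when~$\rho(y)=y$ you cannot ``invoke local linearity of~$\rho$ near~$y$'' to produce an equivariant local collar from a linear model. Without some such hypothesis, finding an equivariant local collar near a fixed boundary point is precisely the hard part of the theorem, and your argument simply assumes it. (For non-fixed points your symmetrization~$V\sqcup\rho(V)$ is fine, and your global patching with $\rho$-invariant Urysohn functions is reasonable; the problem is solely the fixed-point case of the local step.)

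The paper avoids this difficulty by never asking for equivariant \emph{local} collars at all. It takes an arbitrary (non-equivariant) finite cover~$\{U_i\}$ of~$Y$ with local collars~$f_i$, doubles it to the $\rho$-invariant cover~$\{U_i,\rho U_i\}$ with conjugate collars~$g_i(u,t)=\rho f_i(\rho(u),t)$, and symmetrises the partition of unity to~$\{\tfrac12\alpha_i,\tfrac12\alpha_i\circ\rho\}$. The Connelly pushes~$\sigma_i$ built from the~$U_i$ are then paired with their $\rho$-conjugates~$\overline{\sigma}_i=\rho\,\sigma_i\,\rho$, and the composite~$\sigma_1\overline{\sigma}_1\cdots\sigma_k\overline{\sigma}_k$ furnishes the equivariant homeomorphism~$M\to N_{-1}$. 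The point is that symmetrisation happens at the level of the global push homeomorphisms rather than at the level of local collars, so no control of~$\rho$ near fixed points is ever needed. If you want to repair your approach along its own lines, you would need to either add a local linearity hypothesis (which suffices for all applications in the paper) or replace the fixed-point case of your local step by this kind of conjugate-pairing trick.
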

\begin{proof}
We follow the proof in the non-equivariant setting given by Connelly \cite{Connelly}; see also~\cite[Theorem 2.1]{Putman}. 
The proof has two steps: firstly we construct a space~$N$ that contains a subspace~$N_{-1}$ that is equivariantly homeomorphic to~$M$, and secondly we construct an equivariant homeomorphism~$\sigma\colon N \to N$ such that~$\sigma(M) = N_{-1}$.

We begin with the first step.
Let~$\{U_1, \dots , U_k\}$ be an open cover of~$Y$ such that for each~$U_i$ there is an embedding~$f_i\colon U_i \times [0,1] \to M$. 
Applying~$\rho$ to each~$U_i$ and~$f_i$,  leads to a~$\rho$-invariant open cover of~$Y$, namely
\[
\{U_1, \rho U_1, U_2, \rho U_2, \dots, U_k, \rho U_k\}.
\] 
Consider also the embeddings~$g_i \colon \rho U_i \times [0,1] \to M$ given by~$g_i(u,t) = \rho f_i(\rho(u),t)$. 
Choose a partition of unity~$\{\alpha_i \colon U_i \to [0,1]\}$ subordinate to the~$U_i$ and note that~$\{\alpha_i \circ \rho \colon \rho U_i \to [0,1]\}$ is a partition of unity subordinate to the~$\rho U_i$. 
Define a~$\rho$-invariant partition of unity subordinate to~$\{U_1, \rho U_1, U_2, \rho U_2, \dots, U_k, \rho U_k\}$ by~$\{\frac{1}{2}\alpha_i, \frac{1}{2}\alpha_i \circ \rho\}$. Write~$\beta_i = \frac{1}{2}\alpha_i$ and~$\overline{\beta}_i = \frac{1}{2}\alpha_i \circ \rho$ for brevity.

Define~$N$ as the result of gluing~$Y \times (-\infty, 0]$ to~$M$ by~$(m,0) \sim m$ for all~$m \in Y$. Note that this is equivariant with respect to~$\rho \times \id$ on~$Y \times (-\infty, 0]$. Let~$N_{-1}$ be the subspace~$M \cup (Y \times [-1,0])$ of~$N$. 
This concludes the first step of the proof.

It remains to construct an equivariant homeomorphism~$\sigma\colon N \to N$ such that~$\sigma(M) = N_{-1}$.
First extend~$f_i$ to the natural embedding~$F_i \colon U_i \times (-\infty,1] \to N$, and similarly~$g_i$ to the natural embedding~$G_i \colon \rho U_i \times (-\infty,1] \to N$. 
Then define a function~$\zeta_a \colon (-\infty,1] \to (-\infty,1]$ by
\[
\zeta_a(t) = \begin{cases}
t-a, &-\infty < t \leq 0,\\
(1+2a)t - a, &0\leq t \leq 1/2,\\
t, &1/2 \leq t \leq 1.
\end{cases}
\]
For each~$i$, we now define~$\widehat{\sigma}_i \colon U_i \times (-\infty,1] \to U_i \times (-\infty,1]$ by 
\[
\widehat{\sigma}_i(u,t) = (u,\zeta_{\beta_i(u)}(t)).
\]
Since~$\widehat{\sigma}_i$ is the identity outside of~$U_i \times (-\infty,1]$, we can extend it by the identity to a homeomorphism~$\sigma_i \colon N \to N$. Symmetrically, we can define homeomorphisms~$\overline{\sigma}_i = \rho \circ \sigma_i \circ \rho \colon N \to N$ which are identity away from~$\rho U_i \times (-\infty,1]$.
 The composition 
\[
\sigma_1 \circ \overline{\sigma}_1 \circ \sigma_2 \circ \overline{\sigma}_2 \circ \dots \circ \sigma_k \circ \overline{\sigma}_k \colon N \to N
\]
then restricts to the desired equivariant homeomorphism~$M \to N_{-1}$.
\end{proof}

Recall that $\rho_{std}$ always refers to a linear involution on $B^4$, or to the corresponding action on~$S^4$ given by quotienting $(B^4, \rho_{std})$ by its boundary.  

\begin{definition}
\label{def:equivariantly cellular}
Let~$M$ be a 4-manifold, and~$\rho$ an involution on~$M$. Let~$X$ be a~$\rho$-invariant subset of~$M$. Then~$X$ is \emph{equivariantly cellular} if for any open neighborhood~$U \supseteq X$ there is a sequence of sets~$\{C_i\}$ satisfying
\begin{itemize}
  \item for all~$i$, there is an equivariant homeomorphism~$(C_i,\rho \mid_{C_i}) \cong (B^4, \rho_{std})$ for some linear involution $\rho_{std}$,
  \item for all~$i$,~$C_i \subseteq U$,
  \item for all~$i$,~$C_{i+1}$ is contained in the interior of~$C_i$, and
  \item~$\displaystyle \bigcap_i C_i = X$.
\end{itemize}
\end{definition}

\begin{remark}
\label{rem:Cellular}
We record some remarks on Definition~\ref{def:equivariantly cellular}.
\begin{itemize}
\item In the nonequivariant setting,  the condition involving the open set~$U$ is typically not included in the definition of a cellular set (see e.g.~\cite{BrownSchoenflies} or~\cite[Definition 3.14]{DET}) and can in fact be omitted as proved in~\cite[Proposition 3.16]{DET}.
We include the corresponding condition in the equivariant setting to simplify the proof of Lemma~\ref{lem:shrinkingcells}.
Indeed Putman's exposition does include this condition in the definition of cellularity~\cite[page 5]{Putman}.
\item For any~$\rho$-invariant subset~$X$ of~$M$,  the involution~$\rho$ induces an action on the quotient space~$M/X$ obtained by collapsing~$X$.
\item Any equivariantly cellular subset $X$ must have a fixed point, since any linear action on $B^4$ fixes $0$.
\end{itemize}
\end{remark}

The following lemma is an equivariant analogue of~\cite[Lemma 4.1]{Putman}.

\begin{lemma} 
\label{lem:shrinkingcells}
Let~$M$ be a compact 4-manifold,  let~$\rho \colon M \to M$ be an involution. 
If~$X_1, \dots,  X_s$ are equivariantly cellular and pairwise disjoint subsets of Int($M$), then~$M$ is homeomorphic to~$M'$,  the result of collapsing~$X_1, \dots, X_s$ to $s$ distinct points.
\end{lemma}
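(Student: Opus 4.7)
The plan is to apply Bing's shrinking criterion to the quotient map $q\colon M \to M'$. Fix compatible metrics $d$ on $M$ and $d'$ on $M'$. By Bing's criterion, to show that $q$ is a near-homeomorphism (and hence that $M \cong M'$), it suffices to produce, for every $\epsilon > 0$, a homeomorphism $h\colon M \to M$ satisfying $\operatorname{diam}(h(X_i)) < \epsilon$ for all $i$ and $d'(q(h(x)),q(x)) < \epsilon$ for all $x \in M$. I will construct such an $h$ as an equivariant homeomorphism using radial contractions inside the equivariant balls provided by cellularity.

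Fix $\epsilon > 0$. Using continuity of $q$ at the compact sets $X_i$, choose pairwise disjoint open neighborhoods $U_i \supset X_i$ in the interior of $M$ with $\operatorname{diam}(q(U_i)) < \epsilon$. Applying equivariant cellularity in each $U_i$, choose an equivariant ball $C_i \subset U_i$ with $X_i$ in its interior, together with an equivariant homeomorphism $\varphi_i\colon (C_i, \rho|_{C_i}) \cong (B^4, \rho_{std}^i)$ for some linear involution $\rho_{std}^i$. The key technical step is to produce an equivariant self-homeomorphism $f_i\colon C_i \to C_i$, fixing $\partial C_i$ pointwise, with $\operatorname{diam}(f_i(X_i)) < \epsilon$. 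Every linear involution of $\R^4$ is diagonalizable with eigenvalues $\pm 1$, so after conjugation we may assume $\rho_{std}^i$ is orthogonal, hence preserves the Euclidean norm. Define $\wt{f}\colon B^4 \to B^4$ by $\wt{f}(x) = g(\|x\|)\, x/\|x\|$ for $x \neq 0$ and $\wt{f}(0) = 0$, where $g\colon [0,1] \to [0,1]$ is an increasing homeomorphism fixing $1$. A direct calculation using linearity and norm-preservation of $\rho_{std}^i$ gives $\wt{f} \circ \rho_{std}^i = \rho_{std}^i \circ \wt{f}$, so $\wt{f}$ is equivariant. Since $\varphi_i(X_i) \subset B^4$ is compact and contained in the interior, it lies in $rB^4$ for some $r < 1$; choosing $g$ sufficiently small on $[0,r]$ makes $f_i := \varphi_i^{-1} \circ \wt{f} \circ \varphi_i$ satisfy $\operatorname{diam}(f_i(X_i)) < \epsilon$.

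Define $h\colon M \to M$ to equal $f_i$ on each $C_i$ and the identity elsewhere; this is an equivariant homeomorphism because each $f_i$ fixes $\partial C_i$. Condition (a) of the shrinking criterion holds by construction; condition (b) holds because $h$ is supported in $\bigcup_i C_i \subset \bigcup_i U_i$ and for $x \in U_i$ both $q(x)$ and $q(h(x))$ lie in $q(U_i)$, which has diameter $< \epsilon$. The main technical point is the equivariance of the radial contraction, which is resolved by the observation that an orthogonal linear involution commutes with scalar multiplication on $\R^4$; this is exactly what permits the standard non-equivariant shrinking argument to be carried out equivariantly inside each equivariant ball, bypassing any need for an equivariant annulus theorem.
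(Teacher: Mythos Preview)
Your proof is correct for the statement as written, but takes a somewhat different route from the paper. The paper (following Putman) proceeds by induction on $s$ and, for a single equivariantly cellular $X$, directly constructs an equivariant surjection $f\colon M\to M$ as a uniform limit of nested radial contractions inside the chain $C_1\supset C_2\supset\cdots$; this $f$ crushes $X$ to a point and is injective off $X$, so the induced map $M/X\to M$ is the desired homeomorphism. You instead apply Bing's shrinking criterion as a black box, so for each $\epsilon$ you only need one ball and one radial contraction rather than an infinite nested family---a cleaner packaging of the same underlying idea. Both arguments hinge on the observation that a linear (hence, after diagonalisation, orthogonal) involution commutes with radial maps on $B^4$. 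One substantive difference worth flagging: the paper's limit map $f$ is itself equivariant, so the resulting homeomorphism $M\cong M'$ is equivariant, and this stronger conclusion is what is actually used downstream in the proof of the equivariant Schoenflies theorem. Your Bing-shrinking argument, although your shrinking homeomorphisms $h$ are equivariant, only outputs a non-equivariant near-homeomorphism $M\to M'$; this suffices for the lemma as stated, but would need an equivariant form of the shrinking criterion (or a separate argument) to match the later application.
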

\begin{proof}
The proof exactly follows \cite[Lemma 4.1]{Putman}, making all objects symmetric, and maps equivariant. 
We sketch a rough idea of the proof here.

By induction, it is enough to show that~$M$ collapsed along~$X_1$ is equivariantly homeomorphic to~$M$. 
This is achieved by constructing an equivariant surjective map~$f \colon M \to M$ taking~$X_1$ to a point with~$f \mid_{M - X_1}$ injective. 
The map~$f$ is defined by taking the limit of equivariant maps~$\{f_i\}$, where~$f_1$ is the identity, and each other~$f_i$ contracts~$f_{i-1}(C_i)$ linearly and fixes point-wise the complement of~$f_{i-1}(C_{i-1})$. 
Note that a linear contraction is equivariant in~$(B^4, \rho_{std})$ for any linear involution $\rho_{std}$. 
This explains why Putman's~$g_i$ (and therefore the~$f_i$) are equivariant.
\end{proof}

The following lemma is an equivariant analogue of~\cite[Lemma 4.2]{Putman}.

\begin{lemma} 
\label{lem:cellular}
Let $\rho_{std}$ be a linear involution on $B^4$. 
Let~$X_1, \dots, X_s$ be pairwise disjoint closed subsets of \textup{Int}$(B^4, \rho_{std})$ such that each~$X_i$ is~$\rho_{std}$-invariant. Define~$(M',\rho')$ to be the result of collapsing~$X_1, \dots, X_s$ and let~$\pi \colon B^n \to M'$ be the collapse map. 
If there exists an equivariant embedding~$(M',\rho') \to (S^4,\rho_{std})$ that takes~$\pi(\textup{Int}(B^4)) \subseteq M'$ to an open subset of~$S^4$, then each~$X_i$ is equivariantly cellular.
\end{lemma}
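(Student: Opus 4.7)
My plan is to adapt Putman's nonequivariant proof \cite[Lemma~4.2]{Putman} (following Brown \cite{BrownSchoenflies}) to the equivariant setting, replacing the standard collaring theorem with Theorem~\ref{thm:equivariantcollaring} and using an equivariant version of the Bing shrinking criterion as the key tool. By shrinking candidate cells to make them disjoint, it suffices to verify cellularity of each $X_i$ separately. Fix $X := X_i$, let $p := e(\pi(X)) \in S^4$, and note that $p$ is a fixed point of $\rho_{std}$ by equivariance of $\pi$ and $e$.

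To construct candidate cells inside a given open neighborhood $U$ of $X$, I would use the linearity of $\rho_{std}$ near the fixed point $p \in S^4$: there is an equivariant neighborhood of $p$ equivariantly identified with a neighborhood of the origin in $\R^4$ carrying a linear involution. Since the image $e(\pi(\text{Int}(B^4)))$ is open in $S^4$ and contains $p$, I can find a nested sequence of closed linear equivariant balls $\overline{B}_j \ni p$ with $\bigcap_j \overline{B}_j = \{p\}$, each equivariantly homeomorphic to $(B^4, \rho_{std}')$ for some linear $\rho_{std}'$ and contained in $e(\pi(U \cap \text{Int}(B^4)))$. Setting
\[
C_j := (e \circ \pi)^{-1}(\overline{B}_j) \subset \text{Int}(B^4),
\]
yields a nested family of closed $\rho_{std}$-invariant subsets with $C_{j+1} \subset \text{Int}(C_j)$, $\bigcap_j C_j = X$, and $C_j \subset U$ disjoint from the other $X_k$ for $j$ large.

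The main obstacle is verifying that each $C_j$ is equivariantly homeomorphic to $(B^4, \rho_{std}')$. The restricted map $\phi_j := (e \circ \pi)|_{C_j}\colon C_j \to \overline{B}_j$ is a continuous equivariant surjection that collapses $X$ to $p$ and is a homeomorphism elsewhere, so the plan is to show $\phi_j$ is a $\rho_{std}$-equivariant near-homeomorphism via an equivariant Bing shrinking criterion. Concretely, for each $\epsilon > 0$ one needs a $\rho_{std}$-equivariant self-homeomorphism of $C_j$ that shrinks $X$ to diameter less than $\epsilon$ while moving each point by at most $\epsilon$ under $\phi_j$. The required homeomorphisms are built by pulling back linear radial contractions of $\overline{B}_j$ centered at $p$—which are automatically $\rho_{std}'$-equivariant—damped by a radial cutoff to be the identity near $\partial \overline{B}_j$, and then transported to $C_j$ via the equivariant collar of $\partial C_j$ provided by Theorem~\ref{thm:equivariantcollaring}; this collar matches the linear collar of $\partial \overline{B}_j$ under $\phi_j$, allowing the lifted map to extend continuously as the identity across the boundary. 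The subtle technical point, and where I expect the main difficulty to lie, is verifying continuity and equivariance of the lifted self-homeomorphisms in the region near $X$ and checking that they satisfy the Bing shrinking estimates uniformly; this follows from the linearity of the contractions on $\overline{B}_j$ together with careful bookkeeping of the collar identifications, but it is the heart of the equivariant shrinking argument.
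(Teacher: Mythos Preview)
Your setup—defining $C_j=(e\circ\pi)^{-1}(\overline B_j)$ and reducing to proving each $C_j$ is an equivariant ball—matches Putman and the paper.  The gap is in the step where you show $C_j\cong(B^4,\rho_{std})$ via Bing shrinking for $\phi_j\colon C_j\to\overline B_j$.  Your candidates for the shrinking homeomorphisms are lifts through $\phi_j$ of damped radial contractions $r$ of $\overline B_j$; but $\phi_j$ is a homeomorphism only on $C_j\setminus X$, so $\phi_j^{-1}\circ r\circ\phi_j$ is defined only there and does not extend continuously across $X$: as $x_n\to x_0\in X$ one has $r(\phi_j(x_n))\to p$, yet $\phi_j^{-1}$ of a point near $p$ can lie anywhere near $X$, not necessarily near $x_0$.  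Even if one extends by the identity on $X$, then $h(X)=X$ and nothing has been shrunk.  More fundamentally, shrinkability of the unique nondegenerate fibre of $\phi_j$ \emph{is} cellularity of $X$ in $C_j$, which is what you are trying to prove; the argument is circular.

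The paper's route (Putman, following Brown) avoids this.  One does not show $C_j$ itself is a ball; instead one constructs an equivariant \emph{embedding} $F\colon (B^4,\rho_{std})\hookrightarrow C_j$ whose image contains $X$ in its interior, and that image is the required cell.  With $g:=e\circ\pi$ and $p=g(X)$, choose an equivariant homeomorphism $h_j\colon S^4\to S^4$—a radial expansion about the fixed point $p$, hence automatically $\rho_{std}$-equivariant—that is the identity on a smaller concentric ball and satisfies $h_j(\overline B_j)\supset g(B^4)$.  Then $F:=g^{-1}\circ h_j^{-1}\circ g$, extended by the identity on $X$, is a continuous equivariant injection $B^4\to C_j$; continuity across $X$ holds precisely because $h_j$ is the identity near $p$, so $F$ is literally the identity on a neighbourhood of $X$.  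The one additional equivariant check the paper singles out is exactly the equivariance of this $h_j$.
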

\begin{proof}
The proof exactly follows \cite[Lemma 4.2]{Putman}, making all objects symmetric (in particular the chosen points should be fixed points), and maps equivariant. Using the notation from \cite{Putman}, the only additional thing to check in the equivariant setting is that the map~$h_j$ is equivariant, which is immediate since~$B^n$ and~$B_{\varepsilon/j}$ both have the standard symmetry. 
\end{proof}

We can now prove an equivariant~$4$-dimensional Schoenflies theorem.
For this, recall that we simultaneously use~$\rho_{std}$ to denote a linear involution on $\R^4$, its restriction to~$B^4$, and the involution on~$S^4$ given by compactification.

\begin{customthm}
{\ref{thm:EquivariantSchoenfliesIntro}}
[Equivariant Schoenflies Theorem] 
\label{thm:eqSchoenflies}
Let~$\rho_{std} \colon S^4 \to S^4$ be a standard linear involution and let~$\rho_3 \colon S^3 \to S^3$ be 
a linear involution that satisfies~$
\dim \Fix(\rho_3) = \dim \Fix(\rho_{std})-1$.
If~$f \colon (S^3,\rho_3) \to (S^4, \rho_{std})$ is an equivariant locally flat embedding, then the closure of each component of~$S^4 - f(S^3)$  is equivariantly homeomorphic to~$(B^4, \rho_{std})$.
\end{customthm}
\begin{proof}
The proof is split into two cases. 
First, we consider the case when~$\rho_{std}$ has a 0-dimensional fixed-point set; that is~$\Fix(\rho_{std}) = S^0$. 
Without loss of generality, we can work with the closure~$A$ of one component of~$S^4 - f(S^3)$. 
Now choose a tubular neighborhood~$\overline\nu(x_0)$ of the fixed point of~$\rho_{std}$ in~$A$. 
Schoenflies' theorem ensures $A$ is homeomorphic to a $4$-ball.
Then by the annulus theorem we have that~$A - \nu(x_0) \cong S^3 \times I$, and by Theorem \ref{thm:KwasikSchultz}, the restriction of~$\rho_{std}$ to~$A - \nu(x_0)$ is conjugate to~$\rho_3 \times \id$ so that~$A$ is equivariantly homeomorphic to~$\nu(x_0) \cong (B^4, \rho_{std})$ glued to~$(S^3 \times I, \rho_3 \times \id)$, which is equivariantly homeomorphic to~$(B^4, \rho_{std})$.

Second, we consider the case when the fixed-point set of~$\rho_{std}$ is 1, 2, or 3-dimensional. 
Let~$A_1$ and~$A_2$ be the closures of the two components of~$S^4 - f(S^3)$. 
By the generalized Schoenflies theorem we have that the $A_i$ are (nonequivariantly) homeomorphic to $4$-balls. 
Since $f$ is an equivariant embedding, the involution restricts to $A_1 \sqcup A_2$.
Since~$\dim \Fix(\rho_3) =\dim \Fix(\rho)-1$,  one of $A_1$ or $A_2$ contains a fixed point so that $\rho$ does not swap $A_1$ and $A_2$.
\color{black}
Thus it remains to show that~$A_1$ has the standard symmetry, and the same argument will apply to~$A_2$.

Applying Theorem \ref{thm:equivariantcollaring} yields an equivariant collar neighborhood~$(N,\rho) \cong (S^3 \times I, \rho_3 \times \id) \subseteq A_1$ of the boundary of~$A_1$. 
Let~$X_1$ be the closure of~$A_1 - N$, and let~$X_2 = A_2$. 

We assert that $X_1$ and $X_2$ are equivariantly cellular.
Since the fixed-point set of~$\rho_{std}$ is at least 1-dimensional, it includes
points in~$N$ (if the fixed-point set lay entirely in~$A_1 \cong B^4$,  since the~$A_i$ are $\rho$-invariant,~$A_2 \cong B^4$ would be admit a free involution, contradicting the Brouwer fixed-point theorem) so that we can remove a small ball~$B \cong (B^4, \rho_{std})$ contained in~$N$ such that its complement~$M = S^4 - B$ is a 4-ball with the standard symmetry. Here we used that $\rho_3$ is linear to find such a $B$.
Note also that~$X_1$ and~$X_2$ are~$\rho_{std}$-invariant disjoint closed subsets of~$M$. 
Let~$M'$ be the result of collapsing~$M$ along~$X_1 \cup X_2$.
Then~$M'$ naturally embeds into the manifold obtained from~$N$ by collapsing its two boundary components:
$$
M':=M/(X_1 \cup X_2) \hookrightarrow N/\partial N \cong (S^4, \rho_{std}).
$$
Thus, Lemma~\ref{lem:cellular} implies that~$X_1$ and~$X_2$ are equivariantly cellular, as asserted.

Since~$X_1$ is equivariantly cellular,  Lemma~\ref{lem:shrinkingcells} implies that~$A_1$ is equivariantly homeomorphic to~$A_1$ collapsed along~$X_1$.
The latter is equivariantly homeomorphic to~$N$ collapsed along~$S^3 \times \{0\}$ which is~$(B^4, \rho_{std})$. 
\end{proof}


We now prove Theorem~\ref{thm:1dimlclassification} which asserts that there is a unique fixed-point linear involution~$S^4 \to S^4$ with fixed-point set homeomorphic to a circle, up to conjugation in the homeomorphism group of~$S^4$; this involution is~$\rho_0$.

\begin{proof}[Proof of Theorem \ref{thm:1dimlclassification}]
Let~$\rho$ be a fixed-point linear involution on~$S^4$ with a 1-dimensional fixed-point set. 
Since $\rho$ is fixed-point linear,  its fixed-point set admits an equivariant tubular neighborhood $\nu$.
Set $X:=S^4 - \nu$.
If $\ks(X/\rho)=0$, then Theorem~\ref{thm:quotientclassification} (and Remark~\ref{rem:quotientclassification}) implies that $\rho$ is conjugate to $\rho_0$.
We therefore assume that $\ks(X/\rho)=1$.
Glue back in~$(\overline{\nu} - (I_1 \times D^3),\rho)$ to $(X,\rho)$ to get~$(B^4, \rho)$, where we take~$S^1 = I_1 \cup I_2$ to be a decomposition of~$S^1$ into two intervals. We now claim that the double of~$(B^4, \rho)$ is~$(S^4, \rho_{std})$. Then by the equivariant Schoenflies theorem (Theorem \ref{thm:eqSchoenflies}) we will have that~$(B^4, \rho)$ is equivariantly homeomorphic to~$(B^4, \rho_{std})$. 
We then have that~$(S^4, \rho)$ is obtained from~$(B^4, \rho_{std})$ by gluing in~$(\overline{\nu} - (I_2 \times D^3), \rho_{std}) \cong (B^4, \rho_{std})$. 
Since every equivariant homeomorphism on~$S^3$ extends to~$B^4$ by taking the cone construction,
this implies that~$(S^4, \rho) \cong (S^4, \rho_{std})$, as desired. 

To see that the double of~$(B^4, \rho)$ is~$(S^4, \rho_{std})$, note that the double is obtained by gluing two copies of~$(X,\rho)$ along the remaining~$S^2 \times I = \partial X \cap \partial B^4$ boundaries to obtain~$(S^2 \times D^2,\rho)$, and then filling in the remaining~$D^3 \times S^1 = (D^3 \times I_1) \cup (D^3 \times I_1)$. 
Here, note that~$D^3 \times S^1$ has the standard symmetry~$\rho_{std}$ since the $D^3 \times I_1$ inherit the equivariant framing of~$\nu$.
 Since the Kirby-Siebenmann invariant is additive under boundary gluing (see e.g.\cite[Theorem~9.2]{FriedlNagelOrsonPowell}),  we have~$\ks((S^2 \times D^2)/\rho) = \ks(X/\rho) + \ks(X/\rho) = 1+1 = 0$, and hence~$(S^2 \times D^2, \rho) \cong (S^2 \times D^2, \rho_{std})$ by Theorem \ref{thm:quotientclassification}. 
As before, gluing in the remaining~$(S^1 \times D^3, \rho_{std})$,  either by the identity map or a Gluck twist, produces~$(S^4, \rho_{std})$, as desired.
\end{proof}

We now prove Theorem~\ref{thm:nonuniqueness} which asserts that the fixed-point set of the involution~$\rho$ in Theorem~\ref{thm:1dimlclassification} has exactly two equivariant tubular neighborhoods~$\nu_0,\nu_1$ up
up to equivariant homeomorphism of 
pairs~$(S^4, \nu)$. 

\begin{proof}[Proof of Theorem \ref{thm:nonuniqueness}]
We first observe that by Theorem~\ref{thm:quotientclassification},  there are involutions $\rho_0,\rho_1$ of $S^4$ and tubular neigbhorhoods $\nu_0,\nu_1$ of the fixed-point sets with~$\ks(X_{\rho_0,\nu_0})=0$, and~$\ks(X_{\rho_1,\nu_1})=1$.
Theorem~\ref{thm:1dimlclassification} implies that these two involutions are conjugate.
As a consequence,  we deduce that for the involution $\rho$ from Theorem~\ref{thm:1dimlclassification}, there are two equivariant tubular neighborhoods~$\nu_0$ and~$\nu_1$ of the fixed-point set which have complements which are not equivariantly homeomorphic (indeed~$S^4 - \nu_0$ has a quotient with~$\ks = 0$ and~$S^4 - \nu_1$ has a quotient with~$\ks = 1$). 

We now show that there are at most two such equivariant tubular neighborhoods, up to equivariant homeomorphism of pairs. 
Given any equivariant tubular neighborhood $\nu$, we can apply Theorem \ref{thm:quotientclassification} to see that $S^4 - \nu$ is equivariantly homeomorphic to either $S^4 - \nu_0$ or $S^4 - \nu_1$, which can be taken relative to the boundary, possibly up to a Gluck twist. Finally, we can extend the homeomorphism on the boundary to an equivariant homeomorphism betweeen $\nu$ and either $\nu_0$ or $\nu_1$, since the Gluck twist extends equivariantly over $\nu$. The result is an equivariant homeomorphism of pairs $(S^4, \nu) \cong (S^4, \nu_0)$ or $(S^4,\nu) \cong (S^4, \nu_1)$, as desired.
\end{proof}

\section{Modified surgery}
\label{sec:Modified}

This section introduces the surgery theoretic background needed to prove Theorem~\ref{thm:quotientclassification}.
Section~\ref{sub:Normal1Type} reviews normal smoothings,  whereas Section~\ref{sub:Monoid} is concerned with the~$\ell$-monoid~$\ell_5(\Z[\pi],w)$, and Section~\ref{sub:Obstruction} concludes with an overview of the modified surgery obstruction.
Informally, whereas classical surgery is used to decide whether or not (simple) homotopy equivalent $n$-manifolds are $h$-cobordant (or~$s$-cobordant),  modified surgery attempts to reach the same conclusion while only assuming that the homotopy groups of the manifolds agree up to (at most) the middle dimension; this is made more formal using normal $k$-types and normal $k$-smoothings.
The trade-off for this less restrictive starting point is that the resulting surgery $\ell$-monoids are more unwieldy than Wall's $L$-groups.

\medbreak

Throughout this section, it will be helpful to recall the following terminology.
A map~$f\colon X\to Y$ is \emph{$m$-connected} if~$f_*\colon\pi_i(X)\to \pi_i(Y)$ is an isomorphism for~$i<m$ and is surjective for~$i=m$. 
A map~$f\colon X\to Y$ is \emph{$m$-coconnected} if~$f_*\colon\pi_k(X)\to \pi_k(Y)$ is an isomorphism for~$i>m$ and is injective for~$i=m$.

\subsection{Normal smoothings}
\label{sub:Normal1Type}
We briefly review normal smoothings and normal~$k$-types following~\cite{KreckSurgeryAndDuality}.
Let~$B$ be a  space with the homotopy type of a CW complex and let~$\xi \colon B \to \BTOP$ be a fibration.
An \emph{$n$-dimensional~$(B, \xi)$-manifold} consists of a pair
$(M,\overline{\nu})$, where~$M$ is an oriented~$n$-manifold
and~$\overline{\nu} \colon M \to B$ is a lift of the stable normal bundle~$\nu \colon M \to \BTOP$ of~$M$, meaning that~$\nu=\xi \circ \overline{\nu}$.

\begin{definition}
\label{def:NormalSmoothing}
Let~$B$ be a space with the homotopy type of a CW complex with finite~$(k+2)$-skeleton, let~$\xi \colon B \to \BTOP$ be a fibration, and let~$(M,\overline{\nu})$ be a~$(B, \xi)$-manifold.
\begin{enumerate}
\item If~$\overline{\nu}$ is~$(k{+}1)$-connected, then~$(M, \overline{\nu})$ is called a \emph{normal~$k$-smoothing} into~$(B,\xi)$.
\item The pair~$(B,\xi)$ is a \emph{normal~$k$-type} for~$M$ if~$\xi$ is~$(k+1)$-coconnected and there exists a normal~$k$-smoothing~$(M, \overline{\nu})$ into~$(B,\xi)$.
\end{enumerate}
\end{definition}

Next, a~\emph{$(B, \xi)$-nullbordism} for a closed~$n$-dimensional~$(B, \xi)$-manifold~$(M_0,\overline{\nu}_0)$ is an~$(n{+}1)$-dimensional~$(B, \xi)$-manifold~$(W,\overline{\nu})$ for which~$\partial (W,\overline{\nu})=(M_0,\overline{\nu}_0)$.
A~\emph{$(B, \xi)$-cobordism} between two~$n$-dimensional~$(B, \xi)$-manifolds~$(M_0,\overline{\nu}_0)$
and~$(M_1,\overline{\nu}_1)$ essentially consists of a homeomorphism~$f \colon \partial M_0 \to \partial M_1$ and an~$(n{+}1)$-dimensional~$(B, \xi)$-nullbordism of~$(M_0 \cup_f M_1, \overline{\nu}_0 \cup \overline{\nu}_1)$; we refer to~\cite[Section~8.1]{ConwayOrsonPowell} for a more detailed discussion.

\begin{example}
\label{ex:Normal1Type}
We conclude with the setting that will be of interest in Section~\ref{sec:ProofTechnical}.
The normal~$1$-type of a~$\Pin^+$ nonorientable~$4$-manifold~$M$ with~$\pi_1(M) \cong \Z/2$ is~$\xi \colon \BTOPPin^+ \to \BTOP$~\cite[Below Lemma~2.2]{HambletonKreckTeichnerNonorientable}
(in order to see that $\BTOPPin^+$ admits a model with finite $3$-skeleton, consider the fibration $B\Z_2 \to \BTOPPin^+ \to \BTOP$,  deduce that $\pi_1(\BTOPPin^+)\cong \Z_2$ and $\pi_i(\BTOPPin^+)=0$ for $i=2,3$ and conclude by applying (the proof of) CW approximation).
A normal~$1$-smoothing arises from the choice of a (normal)~$\Pin^+$ structure or equivalently from the choice of a tangential~$\Pin^-$ structure~\cite[Convention on page~654]{HambletonKreckTeichnerNonorientable}.
Writing~$B:=\BTOPPin^+$ for brevity,  it is known that taking the Kirby-Siebenmann invariant determines an isomorphism~$\Omega_4(B,\xi) \cong \Z/2,(M,\overline{\nu}) \mapsto \ks(M)$~\cite[page~654]{HambletonKreckTeichnerNonorientable}.
\end{example}

\subsection{The~$\ell$-monoid}
\label{sub:Monoid}
This section gives a brief overview of some facts related to the~$\ell_5$-monoid from~\cite{KreckSurgeryAndDuality,CrowleySixt}.
Our exposition follows closely~\cite{ConwayOrsonPowell,ConwayOrson}. 
We assume familiarity with quadratic forms and their lagrangians; see e.g.~\cite[pages 253 and 260]{LueckMacko}.
In what follows,  all modules are assumed to be stably free.

\begin{notation}
From now on, we fix a group~$\pi$ such that the Whitehead group~$\operatorname{Wh}(\pi)$ is trivial and do not concern ourselves with Whitehead torsion.
In the next section, we work with~$\pi = \Z/2$, a group for which this assumption is satisfied.
We also fix a homomorphism~$w \colon \pi \to C_2=\{ \pm 1\}$ and endow the ring~$\Z[\pi]$ with the involution
$$
\overline{\sum_g n_gg}:=\sum_g n_g w(g)g^{-1}.
$$
\end{notation}

The monoid~$\ell_5(\Z[\pi],w)$ consists of equivalence classes of triples~$((M,\psi);F,V)$ where~$(M,\psi)$ is a quadratic form over~$\Z[\pi]$,~$F \subseteq  M$ is a lagrangian and~$V \subseteq  M$ is a half-rank direct summand.
These triples are called \emph{quasiformations}.
An element~$x \in \ell_5(\Z[\pi],w)$ is called \emph{elementary} if it is represented by a quasiformation~$((M,\psi);F,V)$ where the inclusions induce an isomorphism~$F \oplus V \cong M$.
Two quadratic forms~$(P,\psi)$ and~$(P',\psi')$ are \emph{$0$-stably equivalent} if there are zero forms~$(Q,0)$ and~$(Q',0)$ and an isometry 
$$(P,\psi) \oplus (Q,0) \cong (P',\psi') \oplus (Q',0).$$
For every quadratic form~$v'=(V',\theta')$ over~$\Z[\pi]$,  we write~$\ell_5(v') \subseteq  \ell_5(\Z[\pi],w)$ for the subset of equivalence classes of quasiformations~$((M,\psi);F,V)$ such that the \emph{induced forms}~$(V,\psi|_{V \times V})$ and~$(V^\perp,\psi|_{V^\perp \times V^\perp})$ are~$0$-stably equivalent to~$(V',\theta')$.
For every~$0$-stable equivalence class of a quadratic form~$v'=(V',\theta')$,  the subset~$\ell_5(v')\subset \ell_5(\Z[\pi],w)$ contains an elementary class~\cite[Corollary 5.3]{CrowleySixt}; in particular it is nonempty.

We specialise to~$\pi=\Z/2$ and record the following result that is due to Hambleton-Kreck-Teichner~\cite[Proposition 4 item (c)]{HambletonKreckTeichnerNonorientable}.
\begin{proposition}
\label{prop:HKTProp4}
If~$v=(\Z[\Z/2],\theta)$ is a quadratic form whose symmetrization is the zero form, then every element of~$\ell_5(v) \subset \ell_5(\Z[\Z/2],-)$ is elementary.
\end{proposition}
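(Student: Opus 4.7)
The plan is to take an arbitrary representative $((M,\psi); F, V)$ of a class in $\ell_5(v)$ and, through the equivalence relations defining $\ell_5$ (isometry, $0$-stabilization of the induced forms, and stabilization of the ambient form by hyperbolic pieces with complementary lagrangians), modify it until the natural map $F \oplus V \to M$ becomes an isomorphism. First I would examine this map directly: since $F$ is a lagrangian and $V$ is a half-rank direct summand in a nondegenerate form, a rank count shows that the obstruction to $F \oplus V = M$ lives in the kernel and cokernel of this map, and these pieces are controlled by the induced form $(V, \psi|_V)$ and its $0$-stable equivalence class.

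Next I would unpack the hypothesis on $v = (\Z[\Z/2], \theta)$. Under the nontrivial orientation character $w \colon \Z/2 \to \{\pm 1\}$, the involution on $\Z[\Z/2]$ sends $t \mapsto -t$, so if $\theta(1,1) = a + bt$ then the symmetrization condition $\theta + \overline{\theta} = 0$ forces $a = 0$. This pins $v$ down to a very small family of refinements, and in particular the symmetric bilinear form attached to $v$ is the zero form on $\Z[\Z/2]$. I would leverage this to rewrite both induced forms $(V, \psi|_V)$ and $(V^\perp, \psi|_{V^\perp})$, after $0$-stabilization, as copies of $v$ placed in a standard position in $M$, so that $V^\perp$ is itself a candidate half-rank direct summand with the same induced form.

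The main step is then a geometric rearrangement: using that $V$ and $V^\perp$ carry forms of the same $0$-stable type and that both symmetrizations vanish, one produces a new lagrangian $F' \subset M$, equivalent to $F$ modulo hyperbolic stabilization with complementary lagrangians, such that $F'$ is transverse to $V$ and meets it in $0$. The vanishing of the symmetrization of $\theta$ is exactly what allows this transversalisation to be carried out without introducing a quadratic obstruction on $F'$, since there is no nonzero symmetric pairing that $F'$ could fail to annihilate. After this move $F' \oplus V \to M$ is a morphism of forms between modules of the same rank whose induced forms match, and a standard argument shows it is then an isomorphism, so the class is elementary.

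The main obstacle will be the third step: keeping track of the quadratic refinement (as opposed to just the symmetric bilinear form) during the transversalisation of $F$ against $V$. Quadratic refinements over $\Z[\Z/2]$ with nontrivial $w$ carry an Arf-type invariant, and one must verify that the modifications (basis changes on $M$ and stabilization by hyperbolic forms with their lagrangians) do not alter the class in $\ell_5(v)$. I expect this to reduce, via the explicit form of $\theta$ dictated by the symmetrization hypothesis, to a direct verification that the Arf contribution from $V$ and that from $V^\perp$ cancel, which is precisely what the $0$-stable equivalence of the two induced forms to $v$ provides.
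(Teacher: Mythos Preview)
The paper does not prove this proposition at all: it is stated as a black box and attributed to Hambleton--Kreck--Teichner \cite[Proposition~4(c)]{HambletonKreckTeichnerNonorientable}. So there is no ``paper's own proof'' to compare against; any argument you give is necessarily going beyond what the paper does.

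That said, your proposal is not yet a proof, and the gaps are structural rather than cosmetic. The central move---``produce a new lagrangian~$F'$, equivalent to~$F$ modulo hyperbolic stabilization with complementary lagrangians, such that~$F'$ is transverse to~$V$''---is exactly the hard part, and you have not said how to do it. The equivalence relation on~$\ell_5$ does not allow you to replace~$F$ by an arbitrary lagrangian in the same Witt or isometry class; the only stabilizations permitted are by \emph{elementary} quasiformations, so you cannot simply import a transversality move from the theory of lagrangians in hyperbolic forms. Your claim that ``the vanishing of the symmetrization of~$\theta$ is exactly what allows this transversalisation'' is an assertion, not an argument: you need to exhibit the sequence of allowed moves and check at each stage that the induced forms on~$V$ and~$V^\perp$ remain in the $0$-stable class of~$v$.

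The actual proof in Hambleton--Kreck--Teichner runs through a different mechanism. It uses the identification of the fibres of~$\ell_5$ over a fixed $0$-stable class with a coset space involving~$L_5(\Z[\Z/2],-)$ and certain automorphism data, together with the computation~$L_5(\Z[\Z/2],-)=0$ and a delicate analysis (their ``flip-interchange'' and related moves) specific to rank-one forms over~$\Z[\Z/2]$. None of this is visible in your outline. If you want to give a self-contained argument, you should either reproduce that structure or explain concretely why your transversalisation step can be realised by elementary stabilizations; as written, the proposal identifies the right hypothesis but does not supply the algebra that makes it work.
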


The next section describes the relevance of this result to~$4$-manifold topology.

\subsection{The modified surgery obstruction.}
\label{sub:Obstruction}

This section briefly summarises what is known about Kreck's obstruction to a~$(B,\xi)$-cobordism being~$(B,\xi)$-bordant rel.~boundary to an~$s$-cobordism.
We refer to~\cite[Sections 6 and 7]{KreckSurgeryAndDuality} for more details as well as to~\cite[Sections 7 and 8]{ConwayOrsonPowell} and~\cite[Section 4]{ConwayOrson} for expositions resembling this one.

\begin{notation}
Fix~$4$-manifolds~$M_0$ and~$M_1$ with normal~$1$-type~$(B,\xi)$ and a~$(B,\xi)$-cobordism~$(W,\overline{\nu})$ between~$(M_0,\overline{\nu}_0)$ and~$(M_1,\overline{\nu}_1)$.
For brevity, we set~$\pi:=\pi_1(B)$ and~$w:=w_1(M_i)$.
\end{notation}

Kreck~\cite[Theorem 4]{KreckSurgeryAndDuality} defines a class~$\Theta(W,\overline{\nu}) \in \ell_5(\Z[\pi],w)$, called the \emph{modified surgery obstruction}, that only depends on the~$(B,\xi)$-bordism rel. boundary class of~$(W,\overline{\nu})$ and that is elementary if and only if~$(W,\overline{\nu})$ is~$(B,\xi)$-bordant rel.~boundary to an~$s$-cobordism. 
We describe how this obstruction relates to the algebra of the previous section. 
Write 
$$K\pi_2(M_i):=\ker((\overline{\nu}_i)_* \colon \pi_2(M_i) \to \pi_2(B)).$$
Intersections and self-intersections in~$M_i$ define a quadratic form~$(K\pi_2(M_i),\psi_{M_i})$ called the \emph{Wall form} of~$(M_i, \overline{\nu}_i)$~\cite[Section 5]{KreckSurgeryAndDuality}.
Write~$\lambda_{M_i}$ for the symmetrization of~$\psi_{M_i}$ (this agrees with the restriction of the equivariant intersection form on~$\pi_2(M_i) \cong H_2(M_i;\Z[\pi])$) and
$$\rad(\lambda_{M_i})=\{ x \in K\pi_2(M_i) \mid \lambda_{M_i}(x,y)=0 \text{ for all } y \in K\pi_2(M_i) \}.$$
Given a free~$\Z[\pi]$-module~$S_i$ and a surjection~$\varpi_i \colon S_i \twoheadrightarrow K \pi_2 (M_i)/\rad(\lambda_{M_i})$, the pull-back of the (nondegenerate) Wall form~$(K\pi_2(M_i)/\rad(\lambda_{M_i}),\psi_{M_i})$ by~$\varpi_i$ is called a {\em free Wall form} of~$(M_i, \overline{\nu}_i)$.

As explained in~\cite{CrowleySixt} (see also~\cite[Section~8]{ConwayOrsonPowell}),  another result of Kreck~\cite[Proposition~8]{KreckSurgeryAndDuality} implies that for any choice of free Wall forms~$v(\overline{\nu}_0)$ and~$v(\overline{\nu}_1)$ for~$(M_0,\overline{\nu}_0)$ and~$(M_1,\overline{\nu}_1)$ that are~$0$-stably isometric to a form~$v'$,
we have
\begin{equation}
\label{eq:ObstructionLocation}
\Theta(W,\overline{\nu}) \in \ell_{5}(v').
\end{equation}
The next proposition applies this fact in conjunction with Proposition~\ref{prop:HKTProp4}.
\begin{proposition}
\label{prop:ApplyHKTAlgebra}
Let~$M_0$ and~$M_1$ be nonorientable~$4$-manifolds with~$\pi_1(M_i) \cong \Z/2$ and normal~$1$-type $(B,\xi)$ and let~$f \colon \partial M_0 \to \partial M_1$ be a homeomorphism.
If~$M_0$ and~$M_1$ are bordant over~$(B,\xi)$ relative to~$f$ and have free Wall forms that are $0$-stably isometric to a quadratic form~$v:=(\Z[\Z/2],\theta)$ whose symmetrization is trivial, then~$f$ extends to a homeomorphism~$M_0 \to M_1$.
\end{proposition}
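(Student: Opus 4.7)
The plan is to apply Kreck's modified surgery machinery directly: the hypotheses are tailored to guarantee that the modified surgery obstruction lies in an elementary subset of $\ell_5(\Z[\Z/2],-)$, and Freedman topology for the good group $\Z/2$ converts the resulting $s$-cobordism into a homeomorphism.

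First, I would unpack the assumption that $M_0$ and $M_1$ are bordant over $(B,\xi)$ relative to $f$ into the data of a $(B,\xi)$-cobordism $(W,\overline{\nu})$ between $(M_0,\overline{\nu}_0)$ and $(M_1,\overline{\nu}_1)$ whose restriction to the boundary matches $f$. To this cobordism Kreck associates the modified surgery obstruction $\Theta(W,\overline{\nu})\in \ell_5(\Z[\Z/2],-)$, which by definition is elementary if and only if $(W,\overline{\nu})$ is $(B,\xi)$-bordant rel.\ boundary to an $s$-cobordism.

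Next, I would pin down \emph{which} subset of $\ell_5(\Z[\Z/2],-)$ contains $\Theta(W,\overline{\nu})$. By hypothesis the free Wall forms $v(\overline{\nu}_0)$ and $v(\overline{\nu}_1)$ are $0$-stably isometric to the single form $v=(\Z[\Z/2],\theta)$, so the location formula~\eqref{eq:ObstructionLocation} forces
\[
\Theta(W,\overline{\nu})\in\ell_5(v).
\]
Because the symmetrization of $\theta$ is trivial, Proposition~\ref{prop:HKTProp4} (the Hambleton--Kreck--Teichner input) applies and every class in $\ell_5(v)$ is elementary. In particular $\Theta(W,\overline{\nu})$ is elementary, and consequently $(W,\overline{\nu})$ is $(B,\xi)$-bordant rel.\ boundary to an $s$-cobordism $W'$ from $M_0$ to $M_1$ whose boundary identification still restricts to $f$ on $\partial M_0\cup\partial M_1$.

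Finally, I would invoke the topological $s$-cobordism theorem. Since $\pi_1(M_i)\cong\Z/2$ has $\mathrm{Wh}(\Z/2)=0$ and is a good group in the sense of Freedman--Quinn, Freedman's $s$-cobordism theorem yields a homeomorphism $W'\cong M_0\times I$ rel.\ $M_0$. Restricting this homeomorphism to $M_1\subset\partial W'$ produces a homeomorphism $M_0\to M_1$ which, by construction of the cobordism and of $W'$, extends the prescribed $f$ on the boundary. The single genuine obstacle in this outline is invoking Proposition~\ref{prop:HKTProp4} cleanly: one must verify that the specific free Wall forms of $M_0$ and $M_1$ really are $0$-stably isometric to $v$ (this is an input here, but in the application in Section~\ref{sec:ProofTechnical} it will require an explicit computation of $K\pi_2(M_i)$ modulo its radical).
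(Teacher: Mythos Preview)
Your argument is correct and follows essentially the same route as the paper: take the given $(B,\xi)$-cobordism, use~\eqref{eq:ObstructionLocation} to locate the modified surgery obstruction in $\ell_5(v)$, apply Proposition~\ref{prop:HKTProp4} to conclude elementarity, invoke Kreck's Theorem~4 to obtain an $s$-cobordism, and finish with the topological $s$-cobordism theorem for the good group $\Z/2$.
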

\begin{proof}
By assumption,  there is a~$(B,\xi)$-cobordism~$(W,\overline{\nu})$ relative to~$f$.
As described in~\eqref{eq:ObstructionLocation}, the modified surgery obstruction~$\Theta(W,\overline{\nu}_W)$ defines an element of~$\ell_5(v)$.
Proposition~\ref{prop:HKTProp4} ensures that~$\Theta(W,\overline{\nu}_W)$ is elementary.
The work of Kreck then implies that~$(W,\overline{\nu}_W)$ is~$(B,\xi)$-bordant rel. boundary to an~$s$-cobordism~\cite[Theorem 4]{KreckSurgeryAndDuality}.
Since~$\Z/2$ is a good group, the proposition now follows from the~$5$-dimensional~$s$-cobordism theorem~\cite[Theorem 7.1A]{FreedmanQuinn}.
\end{proof}

\section{$1$-dimensional fixed-point sets: conclusion of the proof}
\label{sec:ProofTechnical}

The goal of this section is to prove Theorem~\ref{thm:quotientclassification} which was used during the proof of Theorem~\ref{thm:1dimlclassification} to classify fixed-point linear involutions~$S^4 \to S^4$ with fixed-point set~$\Fix(\rho_i)$ homeomorphic to a circle.
To do so, we will follow the modified surgery program that was described in Section~\ref{sec:Modified}.
Specifically, we seek to apply Proposition~\ref{prop:ApplyHKTAlgebra}.

\subsection{The topology of the orbit set}

Let~$\psi \colon \pi_1(S^1 \times \R P^2) \to \Z/2$ be the surjection that sends the~$S^1$-factor to zero.

\begin{lemma}
\label{lem:WellDef}
Let~$\rho \colon S^4 \to S^4$ be an orientation-reversing involution with fixed-point set~$\Fix(\rho)$ homeomorphic to~$S^1$ and equivariant tubular neighborhood~$\nu$.
The orbit set~$X:=(S^4 - \nu)/\rho$ is nonorientable and satisfies the following properties:
\begin{enumerate}
\item The boundary of~$X$ is homeomorphic to~$S^1 \times \R P^2$.
\item The fundamental group of~$X$ has order~$2$.
\item 
Under any identification~$\partial X \cong S^1 \times \R P^2$,  the inclusion induced map~$\pi_1(\partial X) \to \pi_1(X)$ agrees with~$\psi.$
\item The universal cover~$\widetilde{X}$ of~$X$ is such that the inclusion~$\partial \widetilde{X} \to \widetilde{X}$ induces an isomorphism
$$ \Z \cong  H_2(\partial \widetilde{X}) \to H_2(\widetilde{X}).$$
In particular, the $\Z$-intersection form of $\widetilde{X}$ and equivariant intersection form of $X$ vanish:
$$Q_{\widetilde{X}}=0\quad \text{and} \quad \lambda_X=0.$$
\end{enumerate}
\end{lemma}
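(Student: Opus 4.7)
The plan is to reduce everything to two inputs: first, Proposition~\ref{prop:trivialnormalbundles} (or more directly the proof of Proposition~\ref{prop:23dimETN}) which equips~$\nu$ with a specific equivariant product structure, and second, a Seifert--van Kampen and Mayer--Vietoris analysis of the decomposition $S^4 = \nu \cup_{\partial \nu} Y$, where $Y := S^4 - \nu$. Throughout, write $Y$ for the nonequivariant complement and note that $X = Y/\rho$ since $\rho$ acts freely on $Y$. The nonorientability of $X$ is immediate: $\rho$ is orientation-reversing on $S^4$ and acts freely on the orientable manifold~$Y$, so the quotient cannot be orientable. For~(1), Proposition~\ref{prop:trivialnormalbundles} (applied as in the proof that fixed-point sets of~$S^4$ are equivariantly framable) gives an equivariant identification $(\overline{\nu}, \rho|) \cong (S^1 \times D^3,\, \id_{S^1} \times -\id_{D^3})$; restricting to the boundary and quotienting yields $\partial X = (S^1 \times S^2)/(\id \times \mathrm{antipodal}) \cong S^1 \times \R P^2$.

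For~(2), I would apply Seifert--van Kampen to $S^4 = \nu \cup_{\partial \nu} Y$. Both $\pi_1(\nu) \cong \Z$ and $\pi_1(\partial \nu) \cong \pi_1(S^1 \times S^2) \cong \Z$ are generated by the $S^1$-factor, so the inclusion $\pi_1(\partial \nu) \to \pi_1(\nu)$ is an isomorphism. The pushout thus collapses and gives $\pi_1(S^4) \cong \pi_1(Y)$; since $\pi_1(S^4) = 1$, we conclude $\pi_1(Y) = 1$. The double cover $Y \to X$ then forces $\pi_1(X) \cong \Z/2$ and identifies $\widetilde{X}$ with $Y$. For~(3), I would trace the two generators of $\pi_1(\partial X) \cong \Z \oplus \Z/2$: the $S^1$-factor lifts under the double cover to an $S^1$-factor loop in $\partial Y$, which bounds (up to conjugation) nothing but clearly lifts to a closed loop in $Y$, so it maps to~$0$ in $\pi_1(X)$; the $\R P^2$-factor lifts to an arc in the $S^2$-fiber joining antipodes, which is a nontrivial loop in $X$. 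This coincides exactly with the prescribed~$\psi$.

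For~(4), Alexander duality gives $\widetilde{H}_*(S^4 - S^1) \cong \Z, 0, \Z, 0, 0$ in degrees $0, 1, 2, 3, 4$, so $H_2(\widetilde{X}) = H_2(Y) \cong \Z$ and of course $H_2(\partial \widetilde{X}) = H_2(S^1 \times S^2) \cong \Z$. The Mayer--Vietoris sequence for $S^4 = \nu \cup_{\partial \nu} Y$ then reads
\[
0 = H_3(S^4) \to H_2(\partial \nu) \to H_2(\nu) \oplus H_2(Y) \to H_2(S^4) = 0,
\]
which reduces to $\Z \to 0 \oplus \Z$, forcing the inclusion-induced map $H_2(\partial \widetilde{X}) \to H_2(\widetilde{X})$ to be an isomorphism. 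In particular, a generator of $H_2(\widetilde{X})$ is represented by a fiber sphere $\{pt\} \times S^2 \subset \partial \widetilde{X}$. Taking a parallel copy $\{pt'\} \times S^2$ gives a disjoint representative, so the ordinary intersection form $Q_{\widetilde{X}}$ vanishes on this generator and hence identically. The same argument applies to the $\rho$-translate of a fiber sphere, which can likewise be pushed off disjointly, so all terms of the equivariant intersection form $\lambda_X$ on the single $\Z[\Z/2]$-generator vanish; $\Z[\Z/2]$-sesquilinearity then gives $\lambda_X = 0$.

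The main obstacle I anticipate is the (2)/(3) step of pinning down the fundamental groups: one needs to be sure that $Y$ is genuinely simply-connected and that the identifications of the $\pi_1(\partial X)$-generators used in step~(3) really match the chosen isomorphism $\partial X \cong S^1 \times \R P^2$ from step~(1). The Seifert--van Kampen argument above handles the first concern cleanly and avoids any reliance on an unknottedness theorem for locally flat circles in $S^4$; once the equivariant framing from Proposition~\ref{prop:trivialnormalbundles} is fixed, the second concern reduces to chasing meridians through the covering, which is a routine but somewhat bookkeeping-heavy computation.
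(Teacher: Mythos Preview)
Your proof is correct and, in fact, more self-contained than the paper's.  The paper's argument for items~(2)--(4) rests on the identification~$\widetilde{X}\cong S^2\times D^2$ (equivalently, that the locally flat circle~$\Fix(\rho)\subset S^4$ is topologically unknotted), from which~$\pi_1(\widetilde{X})=1$ and the~$H_2$-isomorphism are read off via the covering sequence and the long exact sequence of the pair~$(S^2\times D^2,S^2\times S^1)$.  You instead obtain~$\pi_1(Y)=1$ directly from Seifert--van Kampen applied to~$S^4=\nu\cup Y$, and the~$H_2$-isomorphism from Alexander duality together with the Mayer--Vietoris sequence for the same decomposition; this bypasses any appeal to unknotting of circles in~$S^4$.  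The paper's route is shorter once that identification is granted, while yours is more elementary and makes the lemma independent of that (true but unargued) input.  Two small remarks: in your Alexander duality list the reduced~$\widetilde{H}_0(S^4\setminus S^1)$ is~$0$, not~$\Z$ (irrelevant to the argument), and~$H_2(\widetilde{X})\cong\Z$ carries the sign representation of~$\Z/2$ rather than being free of rank one over~$\Z[\Z/2]$---but your push-off argument still shows~$Q_{\widetilde X}=0$ and hence~$\lambda_X=0$, exactly as claimed.
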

\begin{proof}
We prove the first assertion. 
Restricting an equivariant framing
to~$\partial \overline{\nu}=\partial(S^4 - \nu)$ gives an equivariant homeomorphism~$\partial(S^4 - \nu) \to S^1 \times S^2$ which  descends to a homeomorphism
$$\partial X \xrightarrow{\cong} S^1 \times \R P^2.$$
We prove the second and third assertions.
The long exact sequence of homotopy groups associated to the double covers~$S^2 \times D^2 \cong \widetilde{X} \to X$ and~$S^2 \times S^1 \cong \partial \widetilde{X} \to \partial X$ gives rise to the following commutative diagram:
$$
\xymatrix{
1 \ar[r]& {\overbrace{\pi_1(\widetilde{X})}^{=1}} \ar[r]& \pi_1(X) \ar[r]^-\cong& \Z/2 \ar[r]& 1 \\
1 \ar[r]& \pi_1(\partial \widetilde{X}) \ar[r]\ar[u]^{i_*}& \pi_1(\partial X) \ar[r]\ar[u]^{i_*}& \Z/2 \ar[r]\ar[u]^= & 1.
}
$$
The top row of this diagram shows that~$\pi_1(X) \cong \Z/2.$
The commutativity of the diagram ensures that the inclusion induced map~$i_* \colon \pi_1(\partial X) \to \pi_1(X)$ is surjective and agrees with~$\psi.$

The last assertion follows because the exact sequence of the pair~$(\widetilde{X},\partial \widetilde{X}) \cong (S^2 \times D^2,S^2 \times S^1)$ shows that~$H_2(\partial \widetilde{X}) \to H_2(\widetilde{X})$ is an isomorphism.
\end{proof}

\subsection{Proof of Theorem \ref{thm:quotientclassification}.}
\label{sub:ModifiedProof}
Let~$\rho_0,\rho_1 \colon S^4 \to S^4$ be involutions with fixed-point set homeomorphic to a circle, and let~$\nu_0,\nu_1$ be equivariant tubular neighborhoods of the respective fixed-point sets.
Recall that Theorem~\ref{thm:quotientclassification} states that there is a homeomorphism~$X_{\rho_0,\nu_0} \cong X_{\rho_1,\nu_1}$ if and only if~$\ks(X_{\rho_0,\nu_1}) = \ks(X_{\rho_1,\nu_1})$; both values of the Kirby-Siebenmann invariant are realized.

\begin{proof}[Proof of Theorem~\ref{thm:quotientclassification}]
If there is a homeomorphism~$X_{\rho_0,\nu_0} \cong X_{\rho_1,\nu_1}$,  then~$\ks(X_{\rho_0,\nu_0})=\ks(X_{\rho_1,\nu_1})$.
We therefore focus on the converse and apply the modified surgery program from~\cite{KreckSurgeryAndDuality} (and specifically Proposition~\ref{prop:ApplyHKTAlgebra}) to show that~$X_{\rho_0,\nu_0}$ and~$X_{\rho_1,\nu_1}$ are homeomorphic.

More precisely, given equivariant framings~$\fr_0\colon \overline{\nu}_0 \to S^1 \times D^3$ and~$\fr_1 \colon \overline{\nu}_1 \to S^1 \times D^3$,  we show that if~$\ks(X_{\rho_0,\nu_0}) = \ks(X_{\rho_1,\nu_1})$,  then there is an equivariant homeomorphism $S^4 - \nu_0 \to S^4 - \nu_1$ extending either $\fr_1^{-1}\vert_{\partial}\circ \fr_0\vert_{\partial}$ or $\fr_1^{-1}\vert_{\partial}\circ G \circ \fr_0\vert_{\partial}$, where $G$ is the Gluck twist. 
Note that, passing to the orbit sets, this implies that~$X_{\rho_0,\nu_0}\cong X_{\rho_1,\nu_1}$.

Consider the homeomorphism~$\fr_1^{-1}\vert_{\partial}\circ \fr_0\vert_{\partial} \colon \partial \overline{\nu}_0 \to \partial \overline{\nu}_1$.
Taking the union of~$S^4 - \nu_0 \cong S^2 \times D^2$ and~$S^4 - \nu_1 \cong S^2 \times D^2$ along this homeomorphism may either result in~$S^2 \times S^2$ or~$S^2 \widetilde{\times} S^2$.
If the former occurs,  set~$\varphi:=\fr_1^{-1}\vert_{\partial} \circ \fr_0\vert_{\partial}$ whereas if the latter occurs,  set~$\varphi:=\fr_1^{-1}\vert_{\partial} \circ G \circ \fr_0\vert_{\partial}$ where~$G$ denotes the Gluck twist.

Note that $\varphi$ is equivariant because $G,\fr_0$ and $\fr_1$ are equivariant.
Also, by construction, we deduce that~$S^4- \nu_0 \cup_{\varphi} S^4 - \nu_1$ is spin; in fact it is homeomorphic to~$S^2 \times S^2$.

Since the homeomorphism~$\varphi$ is equivariant it induces a homeomorphism~$f \colon \partial X_{\rho_0,\nu_0} \to \partial X_{\rho_1,\nu_1}$.
By construction of $\varphi$, the universal cover of~$X:=X_{\rho_0,\nu_0} \cup_f X_{\rho_1,\nu_1}$ is spin and therefore~$X$ is either tangentially~$\TOPPin^-$ or tangentially~$\TOPPin^+$.
If it were tangentially~$\TOPPin^+$, then the boundary of the~$X_{\rho_i,\nu_i}$ would be tangentially~$\TOPPin^+$ for $i=0,1$.
This would imply that~$0=w_2(\R P^2 \times S^1)=w_2(\R P^2)$, a contradiction.
Thus~$X$ is tangentially~$\TOPPin^-$.

Since~$X_i:=X_{\rho_i,\nu_i}$ has fundamental group of order~$2$ and~$\pi_1(\partial X_i) \to \pi_1(X_i)$ is surjective (recall the second and third items of Lemma~\ref{lem:WellDef}) for $i=0,1$,  a van Kampen argument gives~$\pi_1(X) \cong \Z/2$.
As explained in~\cite[below Lemma 2]{HambletonKreckTeichnerNonorientable},  it follows that~$\xi \colon \BTOPPin^+ \to \BTOP$ is the normal~$1$-type of~$X$; the outline being that $\xi$ and the stable normal bundle $X \to \BTOP$ are $\pi_1$-isomorphism and $\pi_2(\BTOPPin^+)=0.$
Pick a normal~$1$-smoothing~$\alpha  \colon X \to \BTOPPin^+$.
It restricts to a map~$\alpha_i \colon X_i \to \BTOPPin^+$.
Again, the argument as in~\cite[below Lemma 2]{HambletonKreckTeichnerNonorientable} also shows these are necessarily~$2$-equivalences, and therefore normal~$1$-smoothings.

We assert that~$(X,\alpha)$ vanishes in the bordism group~$\Omega_4(B,\xi)$, where $B:=\BTOPPin^+$.
As recalled in Example~\ref{ex:Normal1Type}, it suffices to prove that $\ks(X)=0$, and this follows from the additivity of the Kirby-Siebenmann invariant (see e.g.~\cite[Proposition 8.2]{FriedlNagelOrsonPowell}) which implies that
$$\ks(X)=\ks(X_0)+\ks(X_1)=0.$$
Since~$(H_2(\widetilde{X}_i),Q_{\widetilde{X}_i}) \cong (\Z_-,0)$,  the equivariant intersection forms of~$X_0$ and~$X_1$ is trivial; recall the fourth item of Lemma~\ref{lem:WellDef}.
This implies that the $X_i$ admit a free Wall form of the form~$v=(\Z[\Z/2],\theta)$ whose symmetrization is trivial.
Proposition~\ref{prop:ApplyHKTAlgebra} therefore implies that~$f$ extends to a homeomorphism~$\Phi \colon X_0 \to X_1.$
Lifting to the covers,  and using the definition of $\varphi$, we conclude that there is an equivariant homeomorphism $S^4 - \nu_0 \to S^4 - \nu_1$ extending either $\fr_0^{-1}\vert_{\partial}\circ \fr_1\vert_{\partial}$ or $\fr_0^{-1}\vert_{\partial}\circ G \circ \fr_1\vert_{\partial}$.
Finally,  the actions that realise the values of the Kirby-Siebenmann are constructed in Constructions~\ref{cons:1dimKS=0} and~\ref{cons:1dimKS=1}.
\end{proof}

\begin{construction} \label{cons:1dimKS=0}
The smooth involution~$\rho_0$ was already constructed above Theorem~\ref{thm:1dimlclassification} by extending the involution~$\R^4 \to \R^4,(x_1,y_1,y_2,y_3) \mapsto (x_1,-y_1,-y_2,-y_3)$ over~$S^4$, but we briefly mention an equivalent definition which makes the orbit set $X_{\nu_0,\rho_0}$ more explicit.
To do so,  decompose~$S^4$ as the union of~$S^1 \times D^3$ and~$D^2 \times S^2$.
Consider the involution~$\id_{S^1} \times -\id_{D^3}$ on~$D^2 \times S^2$ and the involution~$\id_{D^2} \times \operatorname{antip}$ on~$S^2 \times D^2$, where~$\operatorname{antip}$ denotes the antipodal map (which is orientation-reversing on even-dimensional spheres).
These two involutions combine to form an involution~$\rho_{0} \colon S^4 \to S^4$ with~$\Fix(\rho_{0}) \cong S^1$ and equivariant tubular neighborhood~$\nu_0$ .
This construction agrees with the previous construction of $\rho_0$ but it is now apparent that~$X_{\rho_{0},\nu_0}$ satisfies~$\ks(X_{\rho_{0},\nu_0})=0.$
Indeed~$X_{\rho_{0},\nu_0} \cong (S^2 \times D^2)/\rho_{0} \cong \R P^2 \times D^2$ is smoothable.
\end{construction}

\begin{construction}
\label{cons:1dimKS=1}
Inspired by a construction from~\cite[p.~650-651]{HambletonKreckTeichnerNonorientable} (see also the exposition in~\cite[Section 5.4]{KasprowskiPowellRayCounterexamples}),
we construct a locally linear involution~$\rho_1 \colon S^4 \to S^4$ with fixed-point set~$\Fix(\rho_1)$ homeomorphic to a circle and an equivariant tubular neighborhood~$\nu_1$ such that~$\ks(X_{\rho_1,\nu_1})=1$.
Connect sum the identity map~$\id_{\R P^2 \times D^2}$ with the collapse map~$E_8 \to S^4$ to obtain a degree one normal map
$$ F \colon (\R P^2 \times D^2) \# E_8 \to \R P^2 \times D^2.$$ 
Note that~$F|_{\partial}=\id_{\R P^2 \times S^1}$.
The quadratic form on the surgery kernel~$K_n(F)$ is~$\psi_{E_8} \otimes \Z[\Z/2]$ where~$\psi_{E_8}$ denotes the quadratic form underlying the~$E_8$ form.
It follows that the surgery obstruction~$\sigma(F) \in L_4(\Z[\Z/2],-)$ is represented by the image of~$\psi_{E_8}$ under the map~$L_4(\Z) \to L_4(\Z[\Z/2],-)$.
This is the zero map~\cite[Chapter 13A, bottom of page 173]{WallSurgeryOnCompact}, so~$\sigma(F)=0$.
Thus after stabilising with hyperbolics forms, the quadratic form on the surgery kernel of~$F$ is hyperbolic:
\begin{equation}
\label{eq:Hyperbolic}
(\psi_{E_8} \otimes \Z[\Z/2]) \oplus H_+(\Z[\Z/2])^{\oplus k}\cong  H_+(\Z[\Z/2])^{\oplus 4+k}.
\end{equation}
Consider the degree one normal map obtained from~$F$ by taking connected sums with the collapse map~$S^2 \times S^2 \to S^4$:
$$F' \colon \overbrace{(\R P^2 \times D^2) \# E_8 \#^k S^2 \times S^2}^{:=M} \to \R P^2 \times D^2.$$ 
The quadratic form on~$K_n(F')$ is~$(\psi_{E_8} \otimes \Z[\Z/2]) \oplus H_+(\Z[\Z/2])^{\oplus k}$.
The preimage of the module underlying~$H_+(\Z[\Z/2])^{\oplus 4+k}$ under the isometry in~\eqref{eq:Hyperbolic} provides a submodule of~$H_n(M;\Z[\Z/2])$ on which equivariant intersections and (reduced) self-intersections vanish.
It follows from~\cite[Corollary 1.4]{PowellRayTeichner} that there is a~$4$-manifold~$X$ with
$$ (\R P^2 \times D^2) \# E_8 \#^k S^2 \times S^2 \cong X \# \#^{4+k} S^2 \times S^2.$$ 
The additivity of the Kirby-Siebenmann invariant (see e.g.~\cite[Theorem 9.2]{FriedlNagelOrsonPowell}) ensures that~$\ks(X)=1$.

Since we showed that the degree one normal map~$F \in \mathcal{N}(\R P^2 \times D^2,\partial)$ has vanishing surgery obstruction,  and performed surgeries in the interior to kill the surgery kernel,  the resulting homotopy equivalence~$X \to \R P^2 \times D^2$ agrees with~$F$ on the boundary; i.e.  the outcome agrees with~$\id_{\R P^2 \times S^1}$ on the boundary.

In particular~$\pi_1(X) \cong \Z/2$ and the universal (double) cover~$\widetilde{X}$ of~$X$ is endowed with a free involution that restricts to~$\operatorname{antip} \times \id_{S^1}$ on~$S^2 \times S^1$ on the boundary. Here $\operatorname{antip}$ denotes the antipodal map.
Note that~$\widetilde{X}$ is homotopy equivalent to~$S^2 \times D^2$ by lifting the homotopy equivalence from~$X$ to~$\R P^2 \times D^2$. 
Gluing in a tubular neighborhood~$\nu_1 := D^3 \times S^1$ then gives a homotopy equivalence~$\widetilde{X} \cup (D^3 \times S^1)  \to S^4$. 
Freedman's classification of simply-connected 4-manifolds therefore implies that~$\widetilde{X} \cup (D^3 \times S^1)$ is homeomorphic to~$S^4$~\cite{Freedman}.
Extending the deck transformation action on~$\widetilde{X}$ over~$\nu_1$ by~$-\id_{D^3} \times \id_{S^1}$ yields a locally linear involution~$\rho_1 \colon S^4 \to S^4$ with~$\Fix(\rho_1) \cong S^1$ such that the orbit set~$X_{\rho_1,\nu_1} \cong X$ has~$\ks(X_{\rho_1,\nu_1})=1.$
\end{construction}

\section{Involutions on $B^4$} \label{sec:D4}
In this section we state the classification theorem of involutions on $B^4$. 
Note that there are no free involutions on~$B^4$ by the Brouwer fixed-point theorem.

\begin{theorem}
\label{thm:D4involutions}
Locally linear involutions of the closed~$4$-ball are classified as follows.
\begin{enumerate}
\item [(0)] When the fixed-point set is~$0$-dimensional,  it consists of one point.
Up to conjugacy, there is a unique locally linear involution on~$B^4$ that fixes one point.
\item [(1)] When the fixed-point set is~$1$-dimensional,  it is a properly embedded arc.
There is a unique conjugacy class of locally linear involutions on~$B^4$ whose fixed-point set is one-dimensional and admits an equivariant tubular neigbhorhood.
\item [(2)] When the fixed-point set is~$2$-dimensional,  it is a disk.
The set of conjugacy classes of locally linear involutions on~$B^4$ with~$2$-dimensional fixed-point set is in bijection with the set of isotopy classes of locally flat and properly embedded disks~$D \subset B^4$ whose double branched cover satisfies~$\Sigma_2(D) \cong B^4$.
\item  [(3)] When the fixed-point set is~$3$-dimensional,  it is an integer homology~$3$-sphere with a 3-ball removed.
The set of conjugacy classes of locally linear involutions with~$3$-dimensional fixed-point sets is in bijection with the set of homeomorphism classes of integer homology~$3$-spheres.
\item [(4)] When the fixed-point set is~$4$-dimensional, the involution is the identity.
\end{enumerate}
\end{theorem}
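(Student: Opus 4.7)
The strategy is to reduce each case to the corresponding classification on~$S^4$ from Theorem~\ref{thm:IntroTest} via doubling. Given a locally linear involution~$\rho$ on~$B^4$, the double $S^4 \cong B^4 \cup_{\partial B^4} B^4$ carries an induced involution $D\rho$ with $\Fix(D\rho) = D\Fix(\rho)$; the sphere $\partial B^4 \subset S^4$ is equivariantly embedded, and the restriction $\rho|_{\partial B^4}$ is a locally linear involution on $S^3$, conjugate to a linear action by Remark~\ref{rem:InvolutionsS3}. Since $\dim \Fix(D\rho|_{\partial B^4}) = \dim \Fix(D\rho) - 1$ whenever $\Fix(\rho)$ meets $\partial B^4$ nontrivially, the equivariant Schoenflies theorem (Theorem~\ref{thm:EquivariantSchoenfliesIntro}) identifies each half of $(S^4, D\rho)$ with the appropriate $(B^4, \rho_{std})$, thereby transferring classifications from $S^4$ to $B^4$.

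The individual cases then proceed as follows. Case~(4) is immediate from local linearity, as in Proposition~\ref{prop:23dimETN}. Case~(0) follows either by doubling and invoking Theorem~\ref{thm:0dim}, or directly by imitating the proof of Theorem~\ref{thm:0dim}: the complement of an equivariant tubular neighborhood of the unique fixed point is $S^3 \times I$ with a free involution preserving boundary components, whose standardness follows from Theorem~\ref{thm:KwasikSchultz}. Case~(1) follows by doubling and Theorem~\ref{thm:1DIntro}, using that an equivariant tubular neighborhood of the fixed arc doubles to one for the fixed circle in $S^4$. For Case~(3), the boundary of $\Fix(\rho)$ is a closed surface in $S^3 = \partial B^4$ fixed by $\rho|_{\partial B^4}$, which by the classification of linear involutions on~$S^3$ must be an~$S^2$; the double $D\Fix(\rho)$ is then a closed 3-manifold in $S^4$ which, by Proposition~\ref{prop:FixedPointSetSphere}, is an integer homology 3-sphere. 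Hence $\Fix(\rho)$ is an integer homology 3-sphere with an open 3-ball removed, and conversely every such manifold arises from a suitable involution by removing a $D\rho$-equivariant 4-ball around a fixed point of the $S^4$ construction from Proposition~\ref{prop:3D}. Case~(2) follows analogously from doubling and Proposition~\ref{prop:2D}, using that the branched double cover of the doubled disk is the double of $\Sigma_2(D)$, so that $\Sigma_2(D) \cong B^4$ if and only if the branched double cover of the doubled disk in $S^4$ is~$S^4$.

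The main obstacle lies in Case~(2), where we need the bijection to be with isotopy classes of disks rather than with homeomorphism classes of pairs~$(B^4, D)$. The proof of Proposition~\ref{prop:2D} invokes Quinn's theorem that the mapping class group of $S^4$ is trivial; the $B^4$ analogue is obtained by combining Quinn's theorem with Alexander's trick. Given a self-homeomorphism $h$ of $B^4$, one restricts to $\partial B^4 = S^3$, isotopes $h|_{\partial B^4}$ to the identity using the triviality of the orientation-preserving topological mapping class group of~$S^3$, extends the isotopy to $B^4$ by coning, then applies Alexander's trick to the resulting boundary-fixing homeomorphism. A lesser subtlety in Case~(3) is that the ``3-ball removed'' structure is well-defined, which follows from the uniqueness of collar neighborhoods of $S^2 \subset \Fix(\rho)$.
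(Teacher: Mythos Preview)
Your doubling strategy works cleanly for Cases~(0) and~(1), and is close in spirit to the paper's approach of capping off with a standard linear~$(B^4,\rho_{std})$ rather than with a second copy of~$(B^4,\rho)$.  However, there is a genuine gap in Cases~(2) and~(3): your appeal to the equivariant Schoenflies theorem to recover~$(B^4,\rho)$ from~$(S^4,D\rho)$ requires the involution on~$S^4$ to be \emph{linear}.  In Case~(3) the doubled involution has fixed set~$D(\Fix\rho)$, which is a nontrivial homology~$3$-sphere whenever~$Y\neq S^3$, so~$D\rho$ is not conjugate to a linear action and Theorem~\ref{thm:EquivariantSchoenfliesIntro} does not apply.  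Similarly, in Case~(2) the doubled involution fixes a possibly knotted~$2$-sphere.  Your argument therefore establishes the shape of the fixed set and surjectivity of the map to homology spheres (resp.\ disk classes), but not injectivity.

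For Case~(3) the paper instead caps off~$(B^4,\rho)$ with a \emph{standard}~$(B^4,\rho_{std})$, so that the resulting~$S^4$-involution has fixed set~$Y$ itself rather than its double, and then proves injectivity of this capping map via an equivariant annulus theorem (Theorem~\ref{thm:EquivariantAnnulusThm}): any two equivariant~$4$-balls around fixed points have equivariantly homeomorphic exteriors.  One can alternatively argue injectivity directly, as you may have intended: the fixed~$3$-manifold separates~$B^4$ into two pieces that are swapped, each a contractible~$4$-manifold with boundary~$Y$, and these are unique rel boundary by Freedman, so the involution is determined by~$Y$; but this step is missing from your write-up.  For Case~(2), the paper adapts Proposition~\ref{prop:2D} directly by first proving~$B^4/\rho\cong B^4$ (cap off, apply Lemma~\ref{lem:SimplyConnectedOrbifold}, then Schoenflies).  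Your final paragraph correctly identifies the mapping-class-group ingredient, but the quotient statement~$B^4/\rho\cong B^4$ should be made explicit, since that is what makes the image of the fixed disk a disk in~$B^4$ in the first place; the doubling reformulation you give does yield this via Schoenflies, but the bijection itself is then the direct one, not the one factoring through Proposition~\ref{prop:2D}.
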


\begin{proof}
Observe that for any locally linear involution on~$B^4$, one can equivariantly glue the boundary to another~$B^4$ with a linear involution to obtain a locally linear involution on~$S^4$.
This is possible because any involution on the boundary~$S^3$ is linear; see Proposition \ref{prop:InvolutionsOnS3}. 

This produces a well defined map from the set of conjugacy classes of locally linear involutions on $B^4$ to the set of locally linear involutions on $S^4$.
When the fixed-point set~$F$ is $n$-dimensional,  since $\partial F$ is necessarily an $(n-1)$-sphere,  the fixed-point set of the capped off involution is obtained by capping off $F$ with an $n$-ball. 
This map is surjective as can be seen by removing an equivariant tubular neighborhood of a fixed-point,  and restricting the involution.

When the fixed-point set is~$0$ or~$1$-dimensional, Theorem \ref{thm:IntroTest} ensures that this action on~$S^4$ is conjugate to the standard linear action (assuming the existence of an equivariant tubular neighborhood in the~$1$-dimensional case).
The equivariant Schoenflies theorem (Theorem \ref{thm:EquivariantSchoenfliesIntro}) now implies that the original involution on~$B^4$ is conjugate to the standard linear action.

Next,  we consider the case where the fixed-point set is $3$-dimensional.
Since we already proved that every locally linear involution on $S^4$ arises by capping off a locally linear involution on $B^4$ with a copy of~$(B^4,\rho_{std})$, it only remains to establish the injectivity of this construction. 
For this,  given a locally linear involution $\rho \colon S^4 \to S^4$,  it is enough to show that for any pair of points~$x_0, x_1$ in the fixed-point set~$F$, and any choice of equivariant tubular neighborhoods~$\nu_0$ of~$x_0$ and~$\nu_1$ of~$x_1$, the exteriors~$(S^4 - \nu_0, \rho)$ and~$(S^4 - \nu_1, \rho)$ are equivariantly homeomorphic. 
In fact, we will observe that there is an equivariant homeomorphism of pairs~$(S^4, \nu_0) \cong (S^4, \nu_1)$. To see this, first find an equivariant tubular neighborhood~$\nu$ of~$F$ which contains both~$\nu_0$ and~$\nu_1$, equivariantly shrinking~$\nu_0$ and~$\nu_1$ if necessary. Then we can find an equivariant~$4$-ball~$B \subset \nu$ containing both~$\nu_0$ and~$\nu_1$. 
Apply the equivariant annulus theorem \ref{thm:EquivariantAnnulusThm}, slightly enlarging~$B$ by an equivariant collar (using Theorem~\ref{thm:equivariantcollaring}), to see that there is a equivariant homeomorphism of pairs~$(B,\nu_0) \cong (B,\nu_1)$ that is the identity on~$\partial B$.
Extend this homeomorphism by the identity over the rest of $B^4$ to obtain an equivariant homeomorphism of pairs~$(S^4, \nu_0, \rho) \cong (S^4, \nu_1, \rho)$ which is supported in~$B$.

Finally, we turn to the case where the fixed-point set is~$2$-dimensional. 
In this case, we find it more convenient to adapt the argument of Proposition~\ref{prop:2D} from $S^4$ to $B^4$.
For this, we need the following fact.
\begin{claim} \label{claim:D4quotient}
If~$\rho \colon B^4 \to B^4$ is a locally linear involution with a 2-dimensional fixed-point set, then~$B^4 / \rho$ is homeomorphic to~$B^4$. 
\end{claim}
\begin{proof}[Proof of Claim \ref{claim:D4quotient}]
As described above, one can equivariantly glue~$(B^4, \rho_{std})$ to~$(B^4, \rho)$ to obtain a locally linear involution~$\rho'$ on~$S^4$ with a~$2$-dimensional fixed-point set. The quotient~$S^4 / \rho'$ is homeomorphic to~$S^4$ by Lemma \ref{lem:SimplyConnectedOrbifold}. Then since~$B^4 /\rho_{std}$ is homeomorphic to~$B^4$, we have that~$B^4 / \rho \cong S^4/\rho' - B^4 / \rho_{std} \cong S^4 - B^4 \cong B^4$, where the last homeomorphism is a consequence of the Schoenflies theorem. 
\end{proof}
Now Theorem \ref{thm:D4involutions}(2) follows from the same argument as Proposition \ref{prop:2D}, noting that the mapping class group of~$B^4$ is also trivial (using the Alexander trick) and replacing Lemma \ref{lem:SimplyConnectedOrbifold} with Claim~\ref{claim:D4quotient}. 
\end{proof}

\appendix

\section{Locally linear involutions of~$S^3$}

The orthogonal group $O(4)$ acts on $S^3$ by diffeomorphisms,  and a diffeomorphism of~$S^3$ is \emph{orthogonal} if it arises in this way.
We note the following result on locally linear involutions of $S^3$ as it was mentioned in Remark~\ref{rem:InvolutionsS3} and will be used in the next section.

\begin{proposition}
\label{prop:InvolutionsOnS3}
Every locally linear involution on~$S^3$ is conjugate to an orthogonal involution.
\end{proposition}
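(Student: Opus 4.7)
The plan is a case analysis based on the dimension of~$\Fix(\rho)$. Smith theory combined with local linearity (so that the fixed-point set is a closed submanifold whose components are~$\Z/2$-cohomology spheres, and Floyd's inequality~$\sum_i b_i^{\Z/2}(\Fix(\rho))\leq 2$ bounds its total Betti number, exactly as in Proposition~\ref{prop:FixedPointSetSphere}) forces~$\Fix(\rho)$ to be one of~$\varnothing$,~$S^0$,~$S^1$,~$S^2$, or~$S^3$. Each of these arises as the fixed set of an essentially unique orthogonal involution (coming from a diagonal element of~$O(4)$), so it suffices to show that an arbitrary locally linear involution with fixed set of each type is topologically conjugate to this orthogonal model.

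First I would dispense with the easy cases. The trivial case~$\Fix(\rho)=S^3$ needs no argument, and the free case is Livesay's theorem~\cite{LivesayFixedPointFree}. When~$\Fix(\rho)\cong S^2$, the topological Schoenflies theorem writes~$S^3 = B_1 \cup_{S^2} B_2$, and local linearity near any fixed point forces~$\rho$ to interchange the two balls; the conjugating homeomorphism is then built by using the identity on~$B_1$ and~$\sigma\circ\rho$ on~$B_2$, where~$\sigma$ denotes the standard orthogonal reflection, which agrees with the identity on the common boundary~$S^2$ and conjugates~$\rho$ to~$\sigma$. The~$0$-dimensional case reduces, after excising equivariant~$3$-ball neighborhoods of the two fixed points (provided by local linearity) and applying the~$3$-dimensional annulus theorem, to a free involution on~$S^2\times I$ that does not swap the boundary components; an argument directly analogous to that of Theorem~\ref{thm:KwasikSchultz} (using Livesay's classification of free involutions on~$S^2$ in place of~$S^3$ and uniqueness of~$h$-cobordisms from~$\R P^2$ to itself) shows this restriction is conjugate to~$\rho_{std}\times\id_I$, and the conjugation then extends over the two excised balls by coning.

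The genuine content is the~$1$-dimensional case, and this is the main obstacle. Here~$\Fix(\rho)\cong S^1$ is \emph{a priori} a possibly knotted circle in~$S^3$, and the key nontrivial input required is the positive resolution of the Smith conjecture for involutions of~$S^3$, due classically to Waldhausen in the PL category, which asserts that this fixed circle is unknotted. To apply it in the present topological locally linear setting one uses equivariant smoothing of locally linear~$3$-dimensional actions, which is standard via Kirby--Siebenmann and Moise together with equivariant smoothing theory in low dimensions. Once unknottedness is established, both~$\rho$ and the standard orthogonal rotation have the same unknotted fixed circle with bundle-isomorphic equivariant tubular neighborhoods, and the complements are solid tori~$S^1\times D^2$ carrying free involutions whose restrictions to the boundary tori agree after a homeomorphism. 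Classification of free involutions on the solid torus (equivalently, analysis of the double cover of the complementary solid torus in the orthogonal model) then yields the desired conjugation on the complement, which, combined with the straightforward conjugation on the tubular neighborhoods, assembles into a global conjugation on~$S^3$.
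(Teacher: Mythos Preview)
Your proposal is correct and follows essentially the same case-by-case approach as the paper, invoking the same key inputs (Livesay for the free case, the Smith conjecture for the $1$-dimensional case, Schoenflies for the $2$-dimensional case). The paper is terser: it cites Livesay~\cite{Livesay} directly for the $0$-dimensional case rather than your annulus/$h$-cobordism reduction (whose appeal to uniqueness of $h$-cobordisms from $\R P^2$ to itself would itself need a reference or short argument), handles the $2$-dimensional case by passing to the quotient $S^3/\rho\cong D^3$ and lifting a homeomorphism of quotients back to the branched covers, and treats the $1$-dimensional case as a bare citation to the Smith conjecture resolution~\cite{MorganBass}, which already subsumes the conjugacy conclusion and makes your subsequent analysis of free involutions on the complementary solid torus unnecessary.
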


\begin{proof}
When the fixed-point set is empty or $0$-dimensional,  this is work of Livesay~\cite{LivesayFixedPointFree,Livesay}.
When the fixed-point set is~$1$-dimensional, this is equivalent to the resolution of the Smith conjecture \cite[page 5]{MorganBass}. 
When the fixed-point set is $2$-dimensional, this follows from the Schoenflies theorem~\cite{BrownSchoenflies,MazurSchoenflies}: if two locally linear involutions~$\rho_0,\rho_1$ fix a~$2$-sphere,
then by the Schoenflies theorem, $\Fix(\rho_0)$ and $\Fix(\rho_1)$ bound balls on both sides, so $S^3/\rho_i \cong D^3$, and lifting any homeomorphism $S^3/\rho_0 \to S^3/\rho_1$ to the branched covers leads to the required conjugacy.
\end{proof}

Note that the conjugacy class of any involution in~$O(4)$ is uniquely determined by the dimension of the fixed-point set.

\section{An equivariant annulus theorem}

The following result answers Question~\ref{q:EAC} (which concerns an equivariant generalization of the annulus theorem) in the affirmative when the fixed-point set is~$3$-dimensional.

\begin{theorem}
\label{thm:EquivariantAnnulusThm}
Let~$\rho_{std}$ be a linear involution on~$S^4$,  let~$S^3_1,S^3_2 \subset S^4$ be~$3$-spheres that are fixed setwise by~$\rho_{std},$ and let~$A \subset S^4$ be the region cobounded by~$S^3_1$ and~$S^3_2.$
If the fixed-point set of~$\rho_{std}$ is a~$3$-dimensional fixed-point set, then~$(A, \rho_{std} \mid_{A})$ is equivariantly homeomorphic to~$(S^3 \times I, \rho_3 \times \id)$, for some linear involution~$\rho_3$ on~$S^3$.
\end{theorem}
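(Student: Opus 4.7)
The plan is to pass to the quotient $Q := A/\rho_{std}$ and argue that $Q$ is homeomorphic to $B^3 \times I$ in a way that matches the decomposition of $\partial Q$ induced by the fixed set; doubling across the fixed locus will then recover the desired equivariant product structure on $A$. First, I would pin down how the fixed-point set $F \cong S^3$ of $\rho_{std}$ meets $A$. The spheres $S^3_1, S^3_2$ cut $S^4$ into three regions $L_1, A, B$, each preserved setwise by $\rho_{std}$; Schoenflies realises $L_1$ and $B$ as $4$-balls, so Brouwer's theorem forces each of $F \cap L_1$ and $F \cap B$ to be non-empty, and the connectedness of $F$ then forces $F$ to cross $A$ through both boundary components. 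Local linearity, combined with the fact that $\Fix(\rho_{std}\vert_A) = F \cap A$ is a compact $3$-submanifold of $A$ with boundary in $\partial A$, yields $F \cap S^3_i \cong S^2$ for $i=1,2$; the $3$-dimensional Schoenflies theorem applied inside $F$ then gives $F \cap A \cong S^2 \times I$. Finally, Proposition~\ref{prop:InvolutionsOnS3} identifies each $\rho_{std}\vert_{S^3_i}$ with the standard linear reflection $\rho_3$ of $S^3$ across an equatorial $S^2$.

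Next I would show $Q \cong B^4$. Simple connectivity of $Q$ follows because $A$ is simply-connected and any fixed point provides a stabiliser generating $\Z/2$. The standard Euler characteristic formula for a quotient by an involution with codimension-one fixed set gives $\chi(Q) = (\chi(A) + \chi(F \cap A))/2 = 1$; combining $H_1(Q) = 0$ with the long exact sequence of $(Q, \partial Q)$ shows $H_1(Q, \partial Q) = 0$, and Poincar\'e--Lefschetz duality together with the universal coefficient theorem then forces $H_2(Q;\Z) = 0$. The boundary $\partial Q$ is assembled from $S^3_i/\rho_3 \cong B^3$ and $F \cap A \cong S^2 \times I$ along common $S^2$'s, so $\partial Q \cong S^3$. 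Thus $Q$ is a contractible topological $4$-manifold with boundary $S^3$, and Freedman's theorem produces a homeomorphism $Q \cong B^4$.

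The main obstacle is the third step: upgrading the abstract homeomorphism $Q \cong B^4$ to a product identification $Q \cong B^3 \times I$ under which the decomposition $B^3_1 \cup (S^2 \times I) \cup B^3_2$ of $\partial Q$ matches the standard product decomposition of $\partial(B^3 \times I)$. My approach is to fix any homeomorphism $h_0 \colon Q \to B^3 \times I$, analyse its effect on the boundary decomposition, and use the $3$-dimensional Schoenflies theorem together with uniqueness of bicollars for $S^2 \subset S^3$ to produce a self-homeomorphism $\phi$ of $S^3 = \partial(B^3 \times I)$ carrying $h_0$'s image of the decomposition to the standard one; the Alexander trick then extends $\phi$ to a self-homeomorphism $\widetilde{\phi}$ of $B^3 \times I$, and $\widetilde{\phi} \circ h_0$ is the desired product identification. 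Doubling along the fixed locus $F \cap A$ now finishes the argument: on one hand $A = Q \cup_{F \cap A} \rho_{std}(Q)$ with $\rho_{std}$ swapping the two copies and fixing $F \cap A$ pointwise, while on the other hand the double of $B^3 \times I$ along its equatorial $S^2 \times I \subset \partial(B^3 \times I)$ is $(B^3 \cup_{S^2} B^3) \times I = S^3 \times I$ with doubling involution precisely $\rho_3 \times \id$, so the product identification of $Q$ induces the required equivariant homeomorphism $(A, \rho_{std}\vert_A) \cong (S^3 \times I, \rho_3 \times \id)$.
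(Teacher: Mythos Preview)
Your proposal is correct and follows the same strategy as the paper: identify $\Fix(\rho_{std})\cap A\cong S^2\times I$, show the quotient $Q=A/\rho_{std}\cong B^4$ with boundary decomposed as $B^3\cup(S^2\times I)\cup B^3$, build a homeomorphism to $(S^3\times I)/(\rho_3\times\id)\cong B^3\times I$ respecting this decomposition, and then lift to the branched double cover (your ``doubling''). The only differences are cosmetic: the paper verifies contractibility of $Q$ by applying Mayer--Vietoris to the two halves of $A$ (after invoking Quinn's annulus theorem to get $A\cong S^3\times I$) rather than your Euler-characteristic and duality argument on $Q$ directly, and it makes your proper-embedding claim for $F\cap A$ explicit via the equivariant collaring theorem (Theorem~\ref{thm:equivariantcollaring}).
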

\begin{proof}
Since~$A$ is homeomorphic to~$S^3 \times I$ by Quinn's annulus theorem~\cite{QuinnEndsIII},  we occasionally identify~$A$ with~$S^3 \times I$. 
We also note that the linearity of~$\rho_{std}$ ensures both that~$\Fix(\rho_{std}) \cong S^3$ and that~$\rho_{std}$ exchanges the two components of~$S^4 - \Fix(\rho_{std})$.
\begin{claim}
The fixed-point set of~$\rho_{std} \mid_{S^3 \times I}$ is homeomorphic to~$S^2 \times I$ and is a properly embedded separating submanifold of~$A = S^3 \times I$.
\end{claim}
\begin{proof}
The two boundary components of~$S^3 \times I$ are~$\rho_{std}$-invariant and therefore intersect~$\Fix(\rho_{std})$. 
We assert that~$\Fix(\rho_{std}) \cap (S^3 \times I)$ is a properly embedded submanifold of~$S^3 \times I=:A$.
Since~$A \subset S^4$ has codimension~$0$ and~$\Fix(\rho_{std}) \subset S^4$ is a submanifold, it suffices to verify that~$\partial A~$ and 
$\Fix(\rho_{std})$ intersect transversally.
Theorem \ref{thm:equivariantcollaring} ensures that~$\partial A=S^3 \times \{0,1\}$ admits an equivariant collar neigbhorhood of the form~$I \times \partial A$.
This implies that~$(I \times \partial A) \cap \operatorname{Fix}(\rho_{std})=\operatorname{Fix}(\rho_{std}|_{\partial A}) \times I$ and the asserted transversality follows.

Smith theory (see e.g.  Floyd~\cite[Theorem 4.4]{Floyd}) implies
 that~$\operatorname{Fix}(\rho_{std}|_{S^3 \times \{i\}})$ is homeomorphic to~$S^0,S^1$ or~$S^2$ for~$i=0,1$.
Since~$\partial A$ and~$\operatorname{Fix}(\rho_{std}|_A)$ are $3$-manifolds that intersect transversally in a~$4$-manifold,  their intersection is a surface.
We deduce from these facts that~$\operatorname{Fix}(\rho_{std}|_{\partial A})\cong S^2 \sqcup S^2$.

The Schoenflies theorem implies that the components of~$S^2 \sqcup S^2\cong \operatorname{Fix}(\rho_{std}|_{\partial A}) \subset \operatorname{Fix}(\rho_{std})\cong~S^3$ bound disjoint embedded balls~\cite{BrownSchoenflies,MazurSchoenflies}.
The exterior of these two $3$-dimensional balls agrees with~$\Fix(\rho_{std}) \cap A$.
The annulus theorem then implies that~$\Fix(\rho_{std}) \cap A$ is homeomorphic to~$S^2 \times I$; as indicated e.g. in~\cite[below Theorem 4.1]{FriedlNagelOrsonPowell},
in this dimension,  the annulus theorem follows from work of Rad\'o~\cite{Rado} and Moise~\cite{MoiseHauptvermutung,MoiseGeometricTopology}.
Since~$\Fix(\rho_{std}) \cap A$ is properly embedded in~$A \cong (S^3\times I)$, it follows that it is separating, as claimed.
\end{proof}

We now assert that the two components of~$A - \nu$ are homotopy balls with boundary $S^3$. 
Since~$\rho_{std}$ is a smooth action,  the fixed-point set~$S^2 \times I \cong  \operatorname{Fix}(\rho_{std}|_A):=F_A \subset A$ admits an equivariant tubular neighborhood~$\nu$.
Write~$A  - F_A=A_1 \sqcup A_2$, thicken~$A_1,A_2$ to~$U_1,U_2$ so that~$A=U_1 \cup_{\nu} U_2$.
The fact that the~$A_i$ are simply-connected follows readily from van Kampen and,  since~$A \cong S^3 \times I$ and~$\nu \cong (S^2 \times I) \times I$,  the Mayer-Vietoris sequence takes the form
$$
\xymatrix{
0 \ar[r]& H_3(A_1) \oplus H_3(A_2) \ar[r]& 
H_3(A) \ar[r]^-{\partial}& H_2(\nu) 
\ar[r]& H_2(A_1) \oplus H_2(A_2)
\ar[r]& 0. \\
&&H_3(\partial_+A)\ar[r]^-{\partial,\cong} \ar[u]^{\cong}&H_2(\partial_+ F_A)\ar[u]^{\cong}&
}
$$
To see that the bottom horizontal map is an isomorphism,  one can for example apply the Schoenflies theorem to the sphere~$\partial_+F_A \cong S^2$ in~$\partial_+A\cong S^3$.
Finally, since we proved that the restriction of the action to the components of $\partial A \cong S^3 \sqcup S^3$ fixes an $S^2$,  the boundary of the components of~$A- \nu$ are homeomorphic to~$B^3 \cup (S^2 \times I) \cup B^3 \cong S^3$.
This establishes the assertion.

Since the components of~$A - \nu$ are homotopy balls with~$S^3$ boundary  by work of Freedman,  they are homeomorphic to~$B^4$~\cite{Freedman}.
At this stage, we have proved that~$\rho_{std} \mid_A$ has fixed-point set~$S^2 \times I$, and~$A -\nu \cong B^4 \sqcup B^4$ with the components exchanged by the symmetry. 

It remains to conclude that~$(A, \rho_{std} \mid_A)$ is equivariantly homeomorphic~$(S^3 \times I, \rho_3 \times \id)$, where~$\rho_3$ is the linear involution on~$S^3$ given by reflection across an~$S^2$. 
The orbit set~$(S^3 \times I)/(\rho_3 \times \id)$ is homeomorphic to~$B^4$.
To see that~$A/\rho$ is also homeomorphic to $B^4$, observe that~$A- \Fix(\rho)$ consists of two~$4$-balls (that are permuted by the involution) and that~$\partial (A/\rho)$ is homeomorphic to a $3$-sphere.
To see the latter, note that~$\partial (A/\rho)$ is made up of~$(\partial A)/\rho \cong B^3 \sqcup B^3$ (for this homeomorphism we use Proposition~\ref{prop:InvolutionsOnS3}) and~$\Fix(\rho)/\rho =S^2\times I$ that combine into a~$3$-sphere.
Next, we construct a homeomorphism $A/\rho \to (S^3 \times I)/(\rho_3 \times I).$
As described above, the boundaries of these two~$4$-balls decompose as~$B^3 \cup (S^2 \times I) \cup B^3$.
Map the~$S^2 \times I$ subspaces homeomorphically to each other.
Then extend this homeomorphism over the~$3$-balls, thus yielding a homeomorphism~$\partial (A/\rho) \to \partial (S^3 \times I)/(\rho_3 \times \id_I))$.
Further extend over the~$4$-ball to a homeomorphism~$A/\rho \to (S^3 \times I)/\rho_3 \times \id$.
Lifting to the branched covers over~$S^2 \times I$, this leads to the required equivariant homeomorphism~$(A, \rho_{std} \mid_A)\cong(S^3 \times I, \rho_3 \times \id)$.
\end{proof}

\def\MR#1{}
\bibliography{bibliosimpleCP2}
\end{document}